\definecolor{e-mail}{rgb}{0,.40,.80}
\definecolor{reference}{rgb}{.20,.60,.22}
\definecolor{mrnumber}{rgb}{.80,.40,0}
\definecolor{citation}{rgb}{0,.40,.80}
\def\bQ{{\mathbb Q}}
\def\bC{{\mathbb C}}
\def\log{{\mathrm{log}}}
\def\ellet{{\ell\text{-\'et}}}
\def\Coeff{\mathtt{Coeff}}
\newtheorem{dfn}{Definition}[section]
\newtheorem{thm}[dfn]{Theorem}
\newtheorem{prop}[dfn]{Proposition}
\newtheorem{cor}[dfn]{Corollary}
\newtheorem{rem}[dfn]{Remark}
\newtheorem{ex}[dfn]{Example}
\theoremstyle{remark}
\theoremstyle{definition}
\newtheorem*{theorem*}{Theorem}
\newtheorem*{definition*}{Definition}
\newtheorem*{Remark*}{Remark}
\newcommand{\depth}{\operatorname{dp}}
\newcommand*\snakerightarrow[2][]{%
\mathrel{%
\settowidth\@tempdima{\footnotesize#1}%
\settowidth\@tempdimb{\footnotesize#2}%
\ifdim\@tempdimb>\@tempdima\@tempdima=\@tempdimb\fi
\advance\@tempdima15pt
\tikz[font=\footnotesize,baseline=-2pt]
\draw[decorate,decoration={snake, pre length=3pt, post length=3pt, segment length=6pt, amplitude=2pt},->]
(0,0) -- node[above] {#2} node[below] {#1} (\@tempdima,0);%
}%
}
\begin{document}

\title{
On the 
Landen formula\\for multiple polylogarithms and its\\$\ell$-adic Galois analogue}

\author[D.~Shiraishi]{Densuke Shiraishi}
\dedicatory{In celebration of Professor Hiroaki Nakamura's 60th birthday}
\address{Department of Mathematics,
Faculty of Science Division II,
Tokyo University of Science,
1-3 Kagurazaka, Shinjuku, Tokyo, 162-8601, Japan}
\email{densuke.shiraishi@gmail.com,
dshiraishi@rs.tus.ac.jp}
\date{}
\subjclass[2020]{11G55; 11F80, 11R32, 14H30}
\keywords{Landen formula, Complex multiple polylogarithm, $\ell$-adic Galois multiple polylogarithm, Baker-Campbell-Hausdorff sum, Goldberg polynomial}

\maketitle


\begin{abstract}
In the present paper,
we provide an algebraic and geometric proof of the Landen formula for complex multiple polylogarithms originally established by Okuda and Ueno.
Our approach employs a chain rule of complex KZ solutions arising from the symmetry $z \mapsto \frac{z}{z-1}$ of $\mathbb{P}^1 \backslash \{0,1,\infty\}$.
Furthermore,
by replacing complex KZ solutions
with $\ell$-adic Galois 1-cocycles in this proof,
we obtain the Landen formula for $\ell$-adic Galois multiple polylogarithms.
This formula involves lower weight terms
specific to the $\ell$-adic Galois setting, which originate from 
the higher-order terms of 
the Baker-Campbell-Hausdorff sum ${\rm log}({\rm exp}(-e_1){\rm exp}(-e_0))$.
These lower weight terms are
explicitly 
described by an integral involving Goldberg polynomials.
\end{abstract}

\section{Introduction and main results}

Okuda and Ueno established the following formula.

\begin{thm}[Landen formula for multiple polylogarithms {\cite[Proposition 9]{OU04}}] \label{thm1}
Let $\mathbf{k} \in \bigcup_{d=1}^{\infty} \mathbb{N}^d$.
For the multiple polylogarithm defined by
\[
Li_{\mathbf{k}}(z):=\sum_{0<n_1<\cdots<n_d}\frac{z^{n_d}}{n_1^{k_1}\cdots n_d^{k_d}} \quad (|z|<1),
\]
the following formula holds:
\[
Li_{\mathbf{k}}\left(\dfrac{z}{z-1}\right)+(-1)^{1+\depth(\mathbf{k})} \sum_{\mathbf{J} \preceq \mathbf{k}} Li_{\mathbf{J}}\left(z\right) = 0.
\]
Here, for $\mathbf{k}=(k_1,\dots,k_d)$, we set $\depth(\mathbf{k}):=d$.
The notation $\mathbf{J} \preceq \mathbf{k}$ means that $\mathbf{J}$ is a refinement of $\mathbf{k}$ (see Definition \ref{saibun} for the precise definition).
\end{thm}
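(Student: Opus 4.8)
\medskip

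\noindent\textit{Sketch of the proof.}\quad The plan is to realize the functions $Li_{\mathbf{k}}(z)$ as coefficients of the fundamental solution of the formal Knizhnik--Zamolodchikov (KZ) equation on $\mathbb{P}^{1}\setminus\{0,1,\infty\}$ and to read off the identity from the behaviour of that solution under the involution $\phi\colon z\mapsto\frac{z}{z-1}$, which fixes $0$ and interchanges $1$ and $\infty$. Concretely, I would take the connection form $\omega = e_{0}\,\frac{dz}{z}+e_{1}\,\frac{dz}{1-z}$ in noncommuting variables $e_{0},e_{1}$ and let $L(z)=\sum_{W}L_{W}(z)\,W$ be the group-like solution of $dL=\omega L$ normalized by $L(z)\,z^{-e_{0}}\to 1$ as $z\to 0$, the sum running over words $W$ in $e_{0},e_{1}$. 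The standard iterated-integral dictionary gives $L_{W(\mathbf{k})}(z)=Li_{\mathbf{k}}(z)$ for the word $W(\mathbf{k})=e_{0}^{k_{d}-1}e_{1}\,e_{0}^{k_{d-1}-1}e_{1}\cdots e_{0}^{k_{1}-1}e_{1}$; the only structural fact I shall use is that $W(\mathbf{k})$ always ends with the letter $e_{1}$.

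The first computation is the pullback of the connection. Using $\phi'(z)=-1/(z-1)^{2}$, $1/\phi=(z-1)/z$ and $1-\phi=1/(1-z)$, a short partial-fraction calculation gives $\phi^{*}\omega = e_{0}\,\frac{dz}{z}+(e_{0}-e_{1})\,\frac{dz}{1-z}=\psi(\omega)$, where $\psi$ is the continuous algebra automorphism with $\psi(e_{0})=e_{0}$ and $\psi(e_{1})=e_{0}-e_{1}$ (an involution, mirroring $\phi^{2}=\mathrm{id}$). Hence $L\circ\phi$ and $\psi(L)$ both satisfy the first-order linear system $dF=\psi(\omega)F$, so they differ by a group-like constant multiplied on the right. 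Comparing the two sides as $z\to 0$, where $\phi(z)/z=1/(z-1)\to-1$, identifies this constant as $(-1)^{e_{0}}=\exp(\log(-1)\,e_{0})$, a power series in $e_{0}$ alone. Since right multiplication by a series in $e_{0}$ leaves unchanged the coefficient of any word that ends in $e_{1}$, it follows that the coefficient of $W(\mathbf{k})$ in $L\circ\phi$ --- namely $L_{W(\mathbf{k})}(\phi(z))=Li_{\mathbf{k}}\!\bigl(\tfrac{z}{z-1}\bigr)$ --- coincides with the coefficient of $W(\mathbf{k})$ in $\psi(L(z))$.

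It then remains to extract this coefficient. Expanding $\psi$ on a word $W'$ replaces each occurrence of $e_{1}$ in $W'$ either by $e_{0}$ (with sign $+1$) or by $e_{1}$ (with sign $-1$); therefore $W'$ contributes to the coefficient of $W(\mathbf{k})$ in $\psi(L(z))$ exactly when $W'$ is obtained from $W(\mathbf{k})$ by turning some of its $e_{0}$'s into $e_{1}$'s, and then with the sign $(-1)^{\depth(\mathbf{k})}$. Such $W'$ still ends in $e_{1}$, hence equals $W(\mathbf{J})$ for a unique composition $\mathbf{J}$, and inspecting the block structure of $W(\mathbf{k})$ shows that the $\mathbf{J}$ occurring in this way are precisely the refinements $\mathbf{J}\preceq\mathbf{k}$ of Definition~\ref{saibun}, each appearing once. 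Using $L_{W(\mathbf{J})}(z)=Li_{\mathbf{J}}(z)$ this yields $Li_{\mathbf{k}}\!\bigl(\tfrac{z}{z-1}\bigr)=(-1)^{\depth(\mathbf{k})}\sum_{\mathbf{J}\preceq\mathbf{k}}Li_{\mathbf{J}}(z)$, which is the asserted formula; for $\mathbf{k}=(2)$ it specializes to the classical Landen dilogarithm relation $Li_{2}\!\bigl(\tfrac{z}{z-1}\bigr)+Li_{2}(z)+\tfrac12\bigl(\log(1-z)\bigr)^{2}=0$.

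I expect the main obstacle to be making the ``chain rule for KZ solutions'' fully rigorous: one has to fix a path, and hence a branch of the relevant logarithms, along which $z$ and $\phi(z)$ approach $0$ inside a domain on which both $L$ and $L\circ\phi$ are the genuine normalized solutions, and to verify that the comparison constant really is a series in $e_{0}$ only. That last point is what forces all the branch ambiguities to collapse into the single sign $(-1)^{1+\depth(\mathbf{k})}$ rather than producing spurious transcendental terms; it is also exactly the place where the $\ell$-adic analogue will diverge, the $\ell$-adic comparison cocycle being governed by the Baker--Campbell--Hausdorff series $\log(\exp(-e_{1})\exp(-e_{0}))$, which is no longer a power series in $e_{0}$ alone. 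Granting this, the combinatorial step matching the contributing words with the refinements $\mathbf{J}\preceq\mathbf{k}$ is routine bookkeeping.
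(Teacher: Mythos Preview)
Your proposal is correct and follows essentially the same approach as the paper. The paper proves the refined version (Theorem~\ref{M1}) by invoking the chain rule $G_{\overrightarrow{01}}^{\frac{z}{z-1},\gamma'}(e_0,e_1)=G_{\overrightarrow{01}}^{z,\gamma}(e_0,e_\infty)\cdot\exp(\pi\sqrt{-1}\,e_0)$ from \cite[Lemma 3.3 (2)]{NS25}, then compares the coefficient of $W_{\mathbf{k}}$ on both sides using the combinatorial identity of Theorem~\ref{thm22}; your pullback computation $\phi^*\omega=\psi(\omega)$ together with the asymptotic comparison at $z\to 0$ is exactly how one derives that chain rule, and your extraction of the $W(\mathbf{k})$-coefficient from $\psi(L)$ is precisely Theorem~\ref{thm22} (the sign conventions differ because you take $\frac{dz}{1-z}$ where the paper takes $\frac{dz}{z-1}$, which swaps the roles of $(-1)^{\operatorname{dp}(\mathbf{J})}$ and $(-1)^{\operatorname{dp}(\mathbf{k})}$, but the outcome is the same). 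The obstacle you flag---pinning down the comparison constant as a series in $e_0$ alone---is handled in the paper by the explicit path $\gamma'=\delta\cdot\phi(\gamma)$ of~(\ref{gamma'}).
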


\noindent
In \cite[Proposition 9]{OU04},
this formula was derived analytically using elementary calculus.
The primary goal of this paper is to provide a purely algebraic and geometric proof of the formula, which is indispensable for establishing its $\ell$-adic Galois analogue.

Throughout this paper,
for any $\mathbf{k} \in \bigcup_{d=1}^{\infty} \mathbb{N}^d$ and any point $z \in \mathbb{P}^1(\mathbb{C}) \backslash \{0,1,\infty\}$ on the Riemann sphere minus three points, we regard the complex multiple polylogarithm $Li_{\mathbf{k}}(z)$ as
\begin{align}\label{cpoly1}
Li_{\mathbf{k}}(z): \pi_1^{\rm top}\left(\mathbb{P}^1(\mathbb{C}) \backslash \{0,1,\infty\}; \overrightarrow{01}, z\right) \to \bC,
\quad \gamma \mapsto Li_{\mathbf{k}}(z;\gamma).
\end{align}
Here, $Li_{\mathbf{k}}(z;\gamma)$ denotes a specific complex iterated integral along a topological path $\gamma$ from the tangential base point $\overrightarrow{01}$ to $z$ on $\mathbb{P}^1(\mathbb{C}) \backslash \{0,1,\infty\}$.
See subsection \ref{cmpoly} for the detailed definition of the complex number $Li_{\mathbf{k}}(z;\gamma)$.
In this framework, the above theorem can be refined as follows.

\begin{thm}[The Landen formula for complex multiple polylogarithms] \label{M1}
Let $z$ be a point of $\mathbb{P}^1(\mathbb{C}) \backslash \{0,1,\infty\}$.
Given a path $\gamma \in \pi_1^{\rm top}\left(\mathbb{P}^1(\mathbb{C}) \backslash \{0,1,\infty\}; \overrightarrow{01}, z\right)$, define a path $\gamma'$ associated with $\gamma$ by
\begin{align}\label{gamma'}
\gamma':=\delta \cdot \phi(\gamma) \in \pi_1^{\rm top}\left(\mathbb{P}^1(\mathbb{C}) \backslash \{0,1,\infty\}; \overrightarrow{01}, \dfrac{z}{z-1}\right),
\end{align}
where $\delta \in \pi_1^{\rm top}\left(\mathbb{P}^1(\mathbb{C}) \backslash \{0,1,\infty\}; \overrightarrow{01}, \overrightarrow{0\infty}\right)$ is the path from $\overrightarrow{01}$ to $\overrightarrow{0\infty}$ through the upper half plane, $\phi \in {\rm Aut}\left(\mathbb{P}^1(\mathbb{C}) \backslash \{0,1,\infty\}\right)$ is given by
\[
\phi(z)=\dfrac{z}{z-1},
\]
and paths are composed from left to right.
For any $\mathbf{k} \in \bigcup_{d=1}^{\infty} \mathbb{N}^d$, the following holds:
\begin{align} \label{Main1}
Li_{\mathbf{k}}\left(\dfrac{z}{z-1};\gamma'\right)+(-1)^{1+\depth(\mathbf{k})} \sum_{\mathbf{J} \preceq \mathbf{k}} Li_{\mathbf{J}}\left(z;\gamma\right) = 0.
\end{align}
\end{thm}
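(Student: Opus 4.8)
The plan is to recast \eqref{Main1} as an identity between coefficients of generating series of iterated integrals, and to deduce it from the behaviour of the KZ connection under the coordinate change $z\mapsto\frac{z}{z-1}$. Recall the dictionary: for a path $\gamma$ from $\overrightarrow{01}$ to $z$, the regularized iterated integrals along $\gamma$ of the KZ one-form $\Omega=\frac{e_0}{z}\,dz+\frac{e_1}{z-1}\,dz$ assemble into a group-like series $\mathcal{L}_\gamma\in\bC\langle\langle e_0,e_1\rangle\rangle$, and there is a word $u_{\mathbf{k}}:=e_1 e_0^{k_1-1}e_1 e_0^{k_2-1}\cdots e_1 e_0^{k_d-1}$, beginning with $e_1$, such that $Li_{\mathbf{k}}(z;\gamma)=(-1)^{\depth(\mathbf{k})}\langle u_{\mathbf{k}},\mathcal{L}_\gamma\rangle$; moreover $\gamma\mapsto\mathcal{L}_\gamma$ is multiplicative for concatenation and functorial under automorphisms of $\mathbb{P}^1\setminus\{0,1,\infty\}$. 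The geometric heart of the argument is a \emph{chain rule}. A direct computation gives
\[
\phi^*\Omega=\frac{e_0}{z}\,dz-\frac{e_0+e_1}{z-1}\,dz,
\]
so $\phi^*\Omega$ is $\Omega$ with $e_1$ replaced by $e_\infty:=-(e_0+e_1)$ (reflecting that $\phi$ carries the puncture $1$ to $\infty$), while the residue at $0$ is unchanged. Since iterated integrals transform by pullback, the holonomy of $\Omega$ along $\phi(\gamma)$, based at $\phi(\overrightarrow{01})$, equals $\mathcal{L}_\gamma(e_0,-(e_0+e_1))$; and since $\phi'(0)=-1$ one has $\phi(\overrightarrow{01})=\overrightarrow{0\infty}$, so returning to the base point $\overrightarrow{01}$ via $\delta$ contributes a factor $\mathcal{L}_\delta$. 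As $\delta$ stays near $0$ and only sees the puncture $0$, this factor is $\mathcal{L}_\delta=\exp(\pi i\,e_0)$, whence
\[
\mathcal{L}_{\gamma'}=\exp(\pi i\,e_0)\cdot\mathcal{L}_\gamma\bigl(e_0,\,-(e_0+e_1)\bigr).
\]

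I then extract the coefficient of $u_{\mathbf{k}}$ from both sides. Since $\mathcal{L}_\delta$ is a power series in $e_0$ alone and $u_{\mathbf{k}}$ begins with $e_1$, the factor $\exp(\pi i\,e_0)$ does not contribute, so $\langle u_{\mathbf{k}},\mathcal{L}_{\gamma'}\rangle=\langle u_{\mathbf{k}},\mathcal{L}_\gamma(e_0,-(e_0+e_1))\rangle$. Expanding the substitution $e_1\mapsto-(e_0+e_1)$ word by word, the right-hand side equals $\sum_v(-1)^{\#_{e_1}(v)}\langle v,\mathcal{L}_\gamma\rangle$, summed over words $v$ obtained from $u_{\mathbf{k}}$ by turning some of its $e_0$'s into $e_1$'s. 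Each such $v$ again begins with $e_1$, hence is $u_{\mathbf{J}}$ for a unique composition $\mathbf{J}$; inspecting the construction block by block --- the decorated block $e_0^{k_i-1}$ together with the $e_1$ that precedes it runs through all compositions of $k_i$, and concatenation realizes each refinement $\mathbf{J}\preceq\mathbf{k}$ exactly once --- shows that $\mathbf{J}$ ranges precisely over $\{\mathbf{J}\preceq\mathbf{k}\}$. Since $\#_{e_1}(u_{\mathbf{J}})=\depth(\mathbf{J})$ and $\langle u_{\mathbf{J}},\mathcal{L}_\gamma\rangle=(-1)^{\depth(\mathbf{J})}Li_{\mathbf{J}}(z;\gamma)$, the two sign factors cancel, so the coefficient equals $\sum_{\mathbf{J}\preceq\mathbf{k}}Li_{\mathbf{J}}(z;\gamma)$. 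Hence
\[
Li_{\mathbf{k}}\Bigl(\tfrac{z}{z-1};\gamma'\Bigr)=(-1)^{\depth(\mathbf{k})}\langle u_{\mathbf{k}},\mathcal{L}_{\gamma'}\rangle=(-1)^{\depth(\mathbf{k})}\sum_{\mathbf{J}\preceq\mathbf{k}}Li_{\mathbf{J}}(z;\gamma),
\]
which is \eqref{Main1}.

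The delicate step is the chain rule with its base-point bookkeeping: identifying $\phi(\overrightarrow{01})$ with $\overrightarrow{0\infty}$, checking that the $e_0$-regularization at $0$ is untouched by $e_1\mapsto e_\infty$ (so that the normalization ``from $\overrightarrow{01}$'' is respected on both sides), and computing $\mathcal{L}_\delta$ \emph{exactly} --- in particular verifying that all contributions of the punctures $1,\infty$ vanish, so that $\mathcal{L}_\delta=\exp(\pi i\,e_0)$ and sits on the side of the product that leaves $\langle u_{\mathbf{k}},-\rangle$ unaffected. The subsequent identification of the substitution $e_1\mapsto-(e_0+e_1)$ with the refinement sum is then purely formal. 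This is exactly the step that will carry over, with the needed modifications, to the $\ell$-adic Galois setting: there $-(e_0+e_1)$ gets replaced by the Baker-Campbell-Hausdorff sum $\log(\exp(-e_1)\exp(-e_0))$, whose higher-order terms are what produce the lower-weight correction terms in the $\ell$-adic Landen formula.
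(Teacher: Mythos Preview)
Your proof is correct and follows essentially the same strategy as the paper: establish the chain rule
\[
G_{\overrightarrow{01}}^{\frac{z}{z-1},\gamma'}(e_0,e_1)=G_{\overrightarrow{01}}^{z,\gamma}(e_0,e_\infty)\cdot\exp(\pi i\,e_0)
\]
(in the paper's $G$-convention; you use the opposite $\Lambda$-convention with words beginning in $e_1$ and $\exp(\pi i\,e_0)$ on the left), then extract the coefficient of the word corresponding to $\mathbf{k}$ and identify the substitution $e_1\mapsto -(e_0+e_1)$ with the refinement sum. The only differences are presentational: the paper cites the chain rule from \cite[Lemma~3.3~(2)]{NS25} rather than deriving it from $\phi^*\Omega$ as you do, and it isolates the combinatorial coefficient computation as a separate lemma (Theorem~\ref{thm22}) rather than doing it inline.
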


We derive this formula from a chain rule of complex KZ solutions
\begin{align} \label{crint1}
G_{\overrightarrow{01}}^{\frac{z}{z-1}, \gamma'}\left(e_0^{\rm dR},e_1^{\rm dR}\right)=G_{\overrightarrow{01}}^{z, \gamma}\left(e_0^{\rm dR},e_\infty^{\rm dR}\right)\cdot {\bf exp}\left(\pi \sqrt{-1} e_0^{\rm dR}\right)
\end{align}
arising from the path composition (\ref{gamma'}).
See subsection \ref{cmpoly} for the definition of the complex KZ solution $G_{\overrightarrow{01}}^{z, \gamma}\left(e_0^{\rm dR},e_1^{\rm dR}\right) \in \mathbb{C} \langle \langle e_0^{\rm dR}, e_1^{\rm dR} \rangle \rangle^{\times}$
and section \ref{pfsec} for the details of the proof.

\begin{rem} \label{rem:z=2}
The case $z=2$ is of particular interest because it is a fixed point of the symmetry $\phi: z \mapsto \frac{z}{z-1}$.
In this instance, while the endpoint $z$ coincides with $\frac{z}{z-1}$, the path $\gamma$ is not homotopic to $\gamma'$.
Thus, the main formula \eqref{Main1} specialized to $z=2$ yields a non-trivial relation between the two special values $Li_{\mathbf{k}}\left(2;\gamma\right)$ and $Li_{\mathbf{J}}\left(2;\gamma'\right)$, reflecting the monodromy of (\ref{cpoly1}).
\end{rem}

\begin{center}
\begin{figure}[hbtp]
\caption{Topological paths on ${\mathbb P}^1({\mathbb C})\backslash \{0,1,\infty\}$}
\label{paths2}
\begin{tikzpicture} \label{picture}
\draw (2.1,0) to [out=0,in=180] (5,1.9);

\draw (3.67,1.1) -- (3.48,1.1);
\draw (3.67,1.1) -- (3.67,0.9);

\draw (1,1) -- (1.1,1.1);
\draw (1,1) -- (0.9,1.1);

\draw (0.4,-1.5) -- (0.5,-1.4);
\draw (0.4,-1.5) -- (0.3,-1.4);

\draw (1.9,0) to [out=180,in=90] (0.4,-1.5);
\draw (2.7,-3.4) to [out=180,in=-90] (0.4,-1.5);

\draw (1.9,0) -- (2.1,0);
\draw (2,0) to [out=0,in=-90] (3,1);
\draw (2,2) to [out=0,in=90] (3,1);
\draw (2,2) to [out=180,in=90] (1,1);
\draw (2,0) to [out=180,in=-90] (1,1);

\draw[dotted](-2.5,0)--(6.5,0);

\node at (0.6,2.5) {~};

\node at (0.6,1.1) {$\delta$};

\node at (3.6,1.6) {$\gamma$};
\node at (-0.3,-1.4) {$\phi(\gamma)$};
\node at (2,-0.4) {$0$};
\node at (5.8,-0.4) {$1$};
\node at (-1.8,-0.4) {$\infty$};

\node at (5,1.9) {${\bullet}$};
\node at (5,2.2) {${z}$};
\node at (2.7,-3.4) {${\bullet}$};
\node at (3.6,-3.0) {$\dfrac{z}{z-1}$};

\fill[white](2,0)circle(0.07);
\fill[white](5.6,0)circle(0.07);
\fill[white](-1.6,0)circle(0.07);

\node at (2,0) {${\circ}$};
\node at (5.6,0) {${\circ}$};
\node at (-1.6,0) {${\circ}$};

\end{tikzpicture}
\end{figure}
\end{center}

We then turn to the $\ell$-adic Galois analogue of the above formula for a prime number $\ell$. 
Let $K$ be a subfield of the complex number field $\mathbb{C}$, and let
\[
G_K:={\rm Gal}(\overline{K}/K)
\]
be the absolute Galois group of $K$.
Let $z$ be a $K$-rational base point of $\mathbb{P}^1 \backslash \{0,1,\infty\}$.
Then, $G_K$ acts naturally on the pro-$\ell$ set $\pi_1^\ellet \left(\mathbb{P}^1_{\overline{K}} \backslash \{0,1,\infty\};\overrightarrow{01},{z}\right)$ of pro-$\ell$ \'etale paths from the tangential base point $\overrightarrow{01}$ to $z$ on $\mathbb{P}^1_{\overline{K}} \backslash \{0,1,\infty\}$.
In order to describe this Galois action explicitly,
Wojtkowiak introduced a family of $\ell$-adic special functions $\{Li^{\ell}_{\mathbf{k}}(z)\}_{\mathbf{k}}$:
\begin{align}\label{lpoly1}
Li^{\ell}_{\mathbf{k}}(z): \pi_1^\ellet \left(\mathbb{P}^1_{\overline{K}} \backslash \{0,1,\infty\};\overrightarrow{01},{z}\right) \times G_K \to \mathbb{Q}_{\ell},~(\gamma,\sigma) \mapsto Li^{\ell}_{\mathbf{k}}(z;\gamma,\sigma).
\end{align}
See subsection \ref{lmpoly} for the detailed definition of the $\ell$-adic number $Li^{\ell}_{\mathbf{k}}(z;\gamma,\sigma)$.
By reinterpreting the proof of the complex case (\ref{Main1}) — replacing complex KZ solutions with $\ell$-adic Galois 1-cocycles arising from the Galois action — we derive the following formula from a chain rule of $\ell$-adic Galois 1-cocycles along the path composition (\ref{gamma'}):
\begin{align} \label{crint2}
{\mathfrak f}_{\sigma}^{\frac{z}{z-1},\gamma'}\left(e_0^{\rm B},
e_1^{\rm B}\right)
={\mathfrak f}_{\sigma}^{z,\gamma}\left(e_0^{\rm B},
e_\infty^{\rm B}\right) \cdot {\bf exp}\left(\dfrac{1-\chi(\sigma)}{2} e_0^{\rm B}\right) \quad \left(\sigma \in G_K\right)
\end{align}
where $\chi: G_K \to \mathbb{Z}_{\ell}$ denotes the $\ell$-adic cyclotomic character.
See subsection \ref{lmpoly} for the definition of ${\mathfrak f}^{z,\gamma} \in Z^1\left(G_K, \mathbb{Q}_{\ell} \langle \langle e_0^{\mathrm{B}},
e_1^{\mathrm{B}} \rangle \rangle^{\times}\right)$
and section \ref{pfsec} for the details of the proof.

\begin{thm}[The Landen formula for $\ell$-adic Galois multiple polylogarithms] \label{M2}
Let $z$ be a $K$-rational point of $\mathbb{P}^1 \backslash \{0,1,\infty\}$.
Given a path $\gamma \in \pi_1^{\rm top}\left(\mathbb{P}^1(\mathbb{C}) \backslash \{0,1,\infty\}; \overrightarrow{01}, z\right)$, define a path $\gamma'$ associated with $\gamma$ as in (\ref{gamma'}).
We regard topological paths $\gamma$ and $\gamma'$ as pro-$\ell$ \'etale paths on $\mathbb{P}^1_{\overline{K}} \backslash \{0,1,\infty\}$
via the comparison map
\[
\pi_1^{\rm top}\left(\mathbb{P}^1(\mathbb{C}) \backslash \{0,1,\infty\}; \overrightarrow{01}, \ast\right) \hookrightarrow \pi_1^\ellet\left(\mathbb{P}^1_{\overline{K}} \backslash \{0,1,\infty\};\overrightarrow{01},{\ast}\right)
\] 
induced by the inclusion $\overline{K} \hookrightarrow \mathbb{C}$
for $\ast \in \left\{z,\dfrac{z}{z-1}\right\}$.
For any $\sigma \in G_K$ and $\mathbf{k} \in \bigcup_{d=1}^{\infty} \mathbb{N}^d$, the following holds:
\begin{align} \label{Main2}
&Li^{\ell}_{\mathbf{k}}\left(\dfrac{z}{z-1};\gamma',\sigma\right)+(-1)^{1+\depth(\mathbf{k})} \sum_{\mathbf{J} \preceq \mathbf{k}} Li^{\ell}_{\mathbf{J}}\left(z;\gamma,\sigma\right)
\\
&\quad=
\sum_{M=1}^{\depth(\mathbf{k})} (-1)^{M+\depth(\mathbf{k})} \left(\sum_{\substack{\left(\mathbf{S}, \{(\mathbf{u}_{i}, r_i)\}_{i=1}^{M}\right) \in \mathrm{Part}(\mathbf{k}, M), \\ \exists i, (\mathbf{u}_{i}, r_i) \neq ((1), 0)}} \left(\sum_{\mathbf{J} \preceq \mathbf{S}} Li^{\ell}_{\mathbf{J}}\left({z};\gamma,\sigma\right)\right) \prod_{i=1}^M \mu_{\mathbf{u}_{i}, r_i}\right).\notag
\end{align}
Here,
\[
\mathrm{Part}(\mathbf{k}, M) := \left\{ 
\left(\mathbf{S}, \{(\mathbf{u}_{i}, r_i)\}_{i=1}^{M}\right) 
\ \middle| \ 
\begin{aligned}
&\quad \mathbf{S}=(S_1,\dots,S_M) \in \mathbb{N}^M, \\
&\quad (\mathbf{u}_{1}, r_1),\ldots,(\mathbf{u}_{M}, r_M) \in \bigcup_{d=1}^{\infty} \mathbb{N}^d \times \mathbb{Z}_{\ge 0}, \\
& \left( e_0^{S_M-1} W_{\mathbf{u}_{M}} e_0^{r_M} \right) \cdots \left( e_0^{S_1-1} W_{\mathbf{u}_{1}} e_0^{r_1} \right)= W_{\mathbf{k}}~\text{in}~\{e_0,e_1\}^{\ast}
\end{aligned}
\right\}
\]
where $\{e_0,e_1\}^{\ast}$ is the non-commutative free monoid generated by the non-commuting indeterminates $e_0$ and $e_1$.
Also, for $\mathbf{u} = (u_1, \dots, u_m) \in \mathbb{N}^m$ and $r \in \mathbb{Z}_{\ge 0}$, let
\[
W_{\mathbf{u}} := e_0^{u_m-1}e_1 e_0^{u_{m-1}-1}e_1 \cdots e_0^{u_1-1}e_1 \in \{e_0,e_1\}^{\ast}
\]
and 
\[
\mu_{\mathbf{u}, r} \in \mathbb{Q}
\]
denote the coefficient of the term $W_{\mathbf{u}} e_0^r \in \{e_0,e_1\}^{\ast}$ in the Baker-Campbell-Hausdorff sum ${\bf log}(\mathbf{exp}(-e_1)\mathbf{exp}(-e_0))$.
We express this coefficient explicitly as an integral involving Goldberg polynomials
$G_v(t)~(v \in \mathbb{Z}_{\geq1})$ as follows (see Definition~\ref{def:goldberg_poly} for the definition of $G_v(t)$):
\begin{align}\label{GPint}
\mu_{\mathbf{u}, r} = \sigma_{\mathbf{u}, r} \int_0^1 t^{\lfloor L/2 \rfloor} (t-1)^{\lfloor (L-1)/2 \rfloor} \prod_{i=1}^L G_{v_i}(t) \, dt.
\end{align}
Here, let $(v_1, \dots, v_L)$ be the multi-index obtained from the sequence $(u_m-1, 1, \dots, u_1-1, 1, r)$ by recursively contracting any triplet $(a, 0, b)$ into $a+b$ and then removing all leading and trailing zeros.
The sign factor $\sigma_{\mathbf{u}, r}$ is defined by
\[
\sigma_{\mathbf{u}, r} := \begin{cases}
-1 & (\text{if } u_m > 1), \\
(-1)^{r+\sum_{j=1}^{m}u_j} & (\text{if } u_m = 1).
\end{cases}
\]
\end{thm}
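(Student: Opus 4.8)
The plan is to imitate the complex proof of Theorem~\ref{M1}, starting from the chain rule~(\ref{crint2}), but to feed in the fact that on the \'etale side the generator around $\infty$ is not the linear element $-(e_0^{\rm B}+e_1^{\rm B})$ occurring in the de~Rham computation but the Baker--Campbell--Hausdorff series
\[
e_\infty^{\rm B}=\mathbf{log}\bigl(\mathbf{exp}(-e_1^{\rm B})\mathbf{exp}(-e_0^{\rm B})\bigr),
\]
coming from the relation $x_0x_1x_\infty=1$ among the standard pro-$\ell$ loops. Writing $\bigl[w\bigr]F$ for the coefficient of a word $w\in\{e_0^{\rm B},e_1^{\rm B}\}^{\ast}$ in a series $F$, I would substitute this into~(\ref{crint2}) and take $\bigl[W_{\mathbf{k}}\bigr]$ of both sides. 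Because $W_{\mathbf{k}}$ ends in $e_1^{\rm B}$, right multiplication by $\mathbf{exp}\bigl(\tfrac{1-\chi(\sigma)}{2}e_0^{\rm B}\bigr)$, which only appends powers of $e_0^{\rm B}$, leaves $\bigl[W_{\mathbf{k}}\bigr]$ unchanged; and the definition of $Li^{\ell}_{\mathbf{J}}(z;\gamma,\sigma)$ recalled in subsection~\ref{lmpoly} amounts, after unwinding (as in the complex case), to $Li^{\ell}_{\mathbf{J}}(z;\gamma,\sigma)=(-1)^{\depth(\mathbf{J})}\,\bigl[W_{\mathbf{J}}\bigr]\,\mathfrak{f}^{z,\gamma}_\sigma(e_0^{\rm B},e_1^{\rm B})$. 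Thus the theorem is reduced to computing $\bigl[W_{\mathbf{k}}\bigr]\,\mathfrak{f}^{z,\gamma}_\sigma(e_0^{\rm B},e_\infty^{\rm B})$.

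For that I would split $e_\infty^{\rm B}=-e_0^{\rm B}+E'$, where $E'$ collects the monomials of the BCH series containing at least one $e_1^{\rm B}$; since each such monomial is uniquely of the form $W_{\mathbf{u}}e_0^{r}$, one has $E'=\sum_{\mathbf{u},r}\mu_{\mathbf{u},r}\,W_{\mathbf{u}}\,e_0^{r}$ with $\mu_{\mathbf{u},r}$ precisely the coefficients in the statement. Expanding $\mathfrak{f}^{z,\gamma}_\sigma(e_0^{\rm B},-e_0^{\rm B}+E')$ word by word and recording, at each occurrence of the substituted variable $e_1^{\rm B}$, whether ``$E'$'' or ``$-e_0^{\rm B}$'' is selected, one groups the monomials equal to $W_{\mathbf{k}}$ according to the skeleton word $W_{\mathbf{S}}$ that marks the positions of the selected $E'$-slots; absorbing the unselected $-e_0^{\rm B}$'s and the interleaving powers of $e_0^{\rm B}$ into a reindexing turns this into
\[
\bigl[W_{\mathbf{k}}\bigr]\,\mathfrak{f}^{z,\gamma}_\sigma(e_0^{\rm B},e_\infty^{\rm B})=\sum_{M\geq1}\ \sum_{\left(\mathbf{S},\{(\mathbf{u}_i,r_i)\}_{i=1}^{M}\right)\in\mathrm{Part}(\mathbf{k},M)}\Bigl(\prod_{i=1}^{M}\mu_{\mathbf{u}_i,r_i}\Bigr)\,\bigl[W_{\mathbf{S}}\bigr]\,\mathfrak{f}^{z,\gamma}_\sigma(e_0^{\rm B},e_1^{\rm B}-e_0^{\rm B}),
\]
where the defining relation $(e_0^{S_M-1}W_{\mathbf{u}_M}e_0^{r_M})\cdots(e_0^{S_1-1}W_{\mathbf{u}_1}e_0^{r_1})=W_{\mathbf{k}}$ of $\mathrm{Part}(\mathbf{k},M)$ is exactly the requirement that the blocks $W_{\mathbf{u}_i}e_0^{r_i}$ together with the connecting runs $e_0^{S_i-1}$ reconstitute $W_{\mathbf{k}}$, and $\depth(\mathbf{S})=M$ because $\mathbf{S}\in\mathbb{N}^{M}$. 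This bookkeeping — keeping track of which $e_0$-runs of $W_{\mathbf{k}}$ come from the blocks $W_{\mathbf{u}_i}e_0^{r_i}$, which from the $e_0^{S_i-1}$, and which from reabsorbed $-e_0^{\rm B}$'s, and of the resulting signs — is the main obstacle.

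Next I would evaluate $\bigl[W_{\mathbf{S}}\bigr]\,\mathfrak{f}^{z,\gamma}_\sigma(e_0^{\rm B},e_1^{\rm B}-e_0^{\rm B})$ exactly as in the complex case: under $e_1^{\rm B}\mapsto e_1^{\rm B}-e_0^{\rm B}$ the word $W_{\mathbf{S}}$ becomes a signed sum over the words $W_{\mathbf{J}}$ with $\mathbf{J}\preceq\mathbf{S}$ (those obtained from $W_{\mathbf{S}}$ by turning a subset of its $e_0$'s into $e_1$'s), which with the normalization above equals $(-1)^{M}\sum_{\mathbf{J}\preceq\mathbf{S}}Li^{\ell}_{\mathbf{J}}(z;\gamma,\sigma)$. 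Substituting back yields
\[
Li^{\ell}_{\mathbf{k}}\Bigl(\tfrac{z}{z-1};\gamma',\sigma\Bigr)=\sum_{M=1}^{\depth(\mathbf{k})}(-1)^{M+\depth(\mathbf{k})}\sum_{\left(\mathbf{S},\{(\mathbf{u}_i,r_i)\}\right)\in\mathrm{Part}(\mathbf{k},M)}\Bigl(\prod_{i=1}^{M}\mu_{\mathbf{u}_i,r_i}\Bigr)\Bigl(\sum_{\mathbf{J}\preceq\mathbf{S}}Li^{\ell}_{\mathbf{J}}(z;\gamma,\sigma)\Bigr).
\]
Isolating the unique summand with all $(\mathbf{u}_i,r_i)=((1),0)$, which forces $M=\depth(\mathbf{k})$, $\mathbf{S}=\mathbf{k}$, $\prod_i\mu_{(1),0}=(-1)^{\depth(\mathbf{k})}$ and sign $(-1)^{2\depth(\mathbf{k})}=1$, it contributes $(-1)^{\depth(\mathbf{k})}\sum_{\mathbf{J}\preceq\mathbf{k}}Li^{\ell}_{\mathbf{J}}(z;\gamma,\sigma)$; moving it to the left-hand side produces the term $(-1)^{1+\depth(\mathbf{k})}\sum_{\mathbf{J}\preceq\mathbf{k}}Li^{\ell}_{\mathbf{J}}$, and what remains is exactly the right-hand side of~(\ref{Main2}). (When $E'$ is replaced by its linear part $-e_1^{\rm B}$, only this one summand survives, recovering Theorem~\ref{M1}.)

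It remains to prove~(\ref{GPint}). Here I would apply Goldberg's closed formula for the coefficients of $\mathbf{log}(\mathbf{exp}(x)\mathbf{exp}(y))$, with $x=-e_1^{\rm B}$ and $y=-e_0^{\rm B}$, to the word $W_{\mathbf{u}}e_0^{r}$: its run-length sequence is $(u_m-1,1,\dots,u_1-1,1,r)$, which after recursively contracting each triple $(a,0,b)$ to $a+b$ and deleting leading and trailing zeros becomes $(v_1,\dots,v_L)$, and Goldberg's integral then reads as $\int_0^1 t^{\lfloor L/2\rfloor}(t-1)^{\lfloor(L-1)/2\rfloor}\prod_{i=1}^{L}G_{v_i}(t)\,dt$. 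The prefactor $\sigma_{\mathbf{u},r}$ collects the signs produced by $x=-e_1^{\rm B}$, $y=-e_0^{\rm B}$ together with the asymmetric role, in Goldberg's formula, of the first letter of $W_{\mathbf{u}}e_0^{r}$, which is $e_0$ when $u_m>1$ and $e_1$ when $u_m=1$; this is what forces the case distinction. Matching Goldberg's conventions with Definition~\ref{def:goldberg_poly} is then a routine verification.
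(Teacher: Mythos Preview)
Your proposal is correct and follows essentially the same approach as the paper: the chain rule~(\ref{crint2}), the observation that the factor $\mathbf{exp}\bigl(\tfrac{1-\chi(\sigma)}{2}e_0^{\rm B}\bigr)$ leaves $[W_{\mathbf{k}}]$ unchanged, and then the purely algebraic coefficient identity you sketch (isolated in the paper as Theorem~\ref{thm33}, built on Theorem~\ref{thm22}) together with Goldberg's integral formula (Proposition~\ref{prop:integral_rep_corrected}). The only cosmetic difference is that your intermediate object is $\mathfrak{f}^{z,\gamma}_\sigma(e_0^{\rm B},e_1^{\rm B}-e_0^{\rm B})$ whereas the paper uses $\mathfrak{f}^{z,\gamma}_\sigma(e_0^{\rm B},-e_1^{\rm B}-e_0^{\rm B})$; these differ by $(-1)^{M}$ on each $W_{\mathbf{S}}$-coefficient, a sign you correctly absorb.
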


\begin{thm}[The Landen formula for $\ell$-adic Galois multiple polylogarithms in the case of $\mathbf{k}=n$]\label{M3}
Let the notation and assumptions be as in Theorem \ref{M2}.
For any $\sigma \in G_K$ and $n \in \mathbb{N}$, the following holds:
\begin{align} \label{Main3}
~~{Li}^{\ell}_n\left(\frac{z}{z-1};\gamma',\sigma\right)
+\sum_{\substack{\mathbf{J} \in \mathbb{N}^d,~d \in \mathbb{N} \\ {\rm wt}(\mathbf{J})=n}} {Li}^{\ell}_{\mathbf{J}}(z;\gamma,\sigma) = \sum_{m=1}^{n-1} \frac{(-1)^{n-m+1}}{(n-m+1)!} {Li}^{\ell}_m\left(\frac{z}{z-1};\gamma',\sigma\right).
\end{align}
Here, for $\mathbf{J}=(j_1,\cdots,j_d)$, let
\[
\mathrm{wt}(\mathbf{J}):=j_1+\cdots+j_d.
\]
\end{thm}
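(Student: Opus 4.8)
The plan is to obtain Theorem~\ref{M3} as the special case $\mathbf{k}=(n)$ of Theorem~\ref{M2}, by first simplifying the right-hand side of \eqref{Main2} combinatorially and then running a short generating-function computation with Bernoulli numbers to bring it into the asserted shape.

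First I would put $\mathbf{k}=(n)$ in \eqref{Main2}. Since $\depth(\mathbf{k})=1$, the outer sum over $M$ collapses to $M=1$, the sign $(-1)^{M+\depth(\mathbf{k})}$ is $+1$, and $(-1)^{1+\depth(\mathbf{k})}=1$. The key step is to unwind $\mathrm{Part}((n),1)$: a pair $(\mathbf{S},(\mathbf{u}_1,r_1))$ belongs to it exactly when $e_0^{S_1-1}W_{\mathbf{u}_1}e_0^{r_1}=W_{(n)}=e_0^{n-1}e_1$ in $\{e_0,e_1\}^{\ast}$. Because this target word contains a single $e_1$, occurring as its last letter, one is forced to take $\mathbf{u}_1=(u)$ of depth one, $r_1=0$, and $S_1=n+1-u$ with $1\le u\le n$; the side condition $\exists i,\ (\mathbf{u}_i,r_i)\neq((1),0)$ then removes precisely the case $u=1$. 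Since for a one-term index $(m)$ the refinements $\mathbf{J}\preceq(m)$ are exactly the $\mathbf{J}$ with $\mathrm{wt}(\mathbf{J})=m$, Theorem~\ref{M2} specializes to
\begin{equation}\label{eq:specM2}
Li^{\ell}_n\!\left(\tfrac{z}{z-1};\gamma',\sigma\right)+\sum_{\mathrm{wt}(\mathbf{J})=n}Li^{\ell}_{\mathbf{J}}(z;\gamma,\sigma)=\sum_{u=2}^{n}\mu_{(u),0}\!\!\sum_{\mathrm{wt}(\mathbf{J})=n+1-u}\!\!Li^{\ell}_{\mathbf{J}}(z;\gamma,\sigma).
\end{equation}

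Next I would determine $\mu_{(u),0}$, the coefficient of $e_0^{u-1}e_1$ in $\mathbf{log}(\mathbf{exp}(-e_1)\mathbf{exp}(-e_0))$. With $X=-e_1$ and $Y=-e_0$, the degree-one-in-$X$ part of $\mathbf{log}(e^{X}e^{Y})$ equals $\psi(\mathrm{ad}_Y)(X)$ with $\psi(t)=t/(e^{t}-1)=\sum_{k\ge0}\tfrac{B_k}{k!}t^{k}$, the $B_k$ being the Bernoulli numbers; expanding $(\mathrm{ad}_{-e_0})^{k}(-e_1)$ and reading off the coefficient of $e_0^{u-1}e_1$ gives $\mu_{(u),0}=(-1)^{u}\tfrac{B_{u-1}}{(u-1)!}$ (the same value also falls out of \eqref{GPint} with $\mathbf{u}=(u)$, $r=0$). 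I would then package \eqref{eq:specM2} into the formal generating series over $\mathbb{Q}_{\ell}$
\[
\Lambda(T):=\sum_{n\ge1}Li^{\ell}_n\!\left(\tfrac{z}{z-1};\gamma',\sigma\right)T^{n},\qquad \mathcal{H}(T):=\sum_{n\ge1}\Bigl(\,\sum_{\mathrm{wt}(\mathbf{J})=n}Li^{\ell}_{\mathbf{J}}(z;\gamma,\sigma)\Bigr)T^{n}.
\]
The relations \eqref{eq:specM2} for all $n\ge1$ amount to $\Lambda(T)+\mathcal{H}(T)=\mathcal{H}(T)\,P(T)$ with $P(T)=\sum_{k\ge1}\mu_{(k+1),0}T^{k}=\sum_{k\ge1}(-1)^{k+1}\tfrac{B_k}{k!}T^{k}=1-\tfrac{T}{1-e^{-T}}$. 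Since $P(T)-1$ has unit constant term, this yields $\mathcal{H}(T)=\Lambda(T)\bigl(P(T)-1\bigr)^{-1}$ and hence $\Lambda(T)+\mathcal{H}(T)=\Lambda(T)\cdot\tfrac{P(T)}{P(T)-1}$; a direct computation gives $\tfrac{P(T)}{P(T)-1}=1-\tfrac{1-e^{-T}}{T}=\sum_{k\ge1}\tfrac{(-1)^{k+1}}{(k+1)!}T^{k}$, and equating the coefficient of $T^{n}$ on the two sides is exactly \eqref{Main3}.

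The step that I expect to require genuine care is conceptual: the bare specialization \eqref{eq:specM2} still has its correction term built from the weight-graded sums $\sum_{\mathrm{wt}(\mathbf{J})=m}Li^{\ell}_{\mathbf{J}}(z;\gamma,\sigma)$, whereas \eqref{Main3} asks for the correction in terms of the single-index values $Li^{\ell}_m(\tfrac{z}{z-1};\gamma',\sigma)$ alone, so \eqref{eq:specM2} must be fed back into itself, and the reason this closes up with the clean coefficients $\tfrac{(-1)^{n-m+1}}{(n-m+1)!}$ is the reciprocity $\bigl(\tfrac{T}{1-e^{-T}}\bigr)\bigl(\tfrac{1-e^{-T}}{T}\bigr)=1$ between the two Bernoulli-type kernels governing the two forms. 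Nailing down $\mu_{(u),0}=(-1)^{u}B_{u-1}/(u-1)!$ together with its generating series is the other ingredient one must pin down; the enumeration of $\mathrm{Part}((n),1)$, the identification of the refinements of a one-term index, and the formal power series bookkeeping are routine.
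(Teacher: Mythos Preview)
Your proposal is correct and follows essentially the same route as the paper: specialize to $\mathbf{k}=(n)$ (the paper does this via the chain rule together with Corollary~\ref{cor:single_index}, which is exactly the $\mathbf{k}=(n)$ case of Theorem~\ref{thm33} underlying Theorem~\ref{M2}), identify $\mu_{(u),0}=(-1)^{u}B_{u-1}/(u-1)!$ (the paper's Proposition~\ref{prop:integral_rep_corrected2}), and then invert a Bernoulli-type generating series. The only cosmetic difference is bookkeeping: the paper writes $\mathcal{L}(t)=\mathcal{B}(t)\mathcal{S}(t)$ with $\mathcal{B}(t)=te^{t}/(e^{t}-1)$ and multiplies by $\mathcal{B}(t)^{-1}=(1-e^{-t})/t$, whereas you phrase it as $\Lambda+\mathcal{H}=\mathcal{H}P$ and compute $P/(P-1)$; since $\mathcal{B}(t)=-(P(t)-1)$ these are the same identity.
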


\begin{rem}\label{rem:weight_decrease}
In the left-hand side of the formula \eqref{Main2}, 
every term $Li^{\ell}_{\mathbf{J}}(z;\gamma,\sigma)$ satisfies the condition $\mathrm{wt}(\mathbf{J}) = \mathrm{wt}(\mathbf{k})$
by the definition of $\mathbf{J} \preceq \mathbf{k}$ (see Definition \ref{saibun}).
On the other hand,
in the right-hand side of the formula \eqref{Main2}, 
every term $Li^{\ell}_{\mathbf{J}}(z;\gamma,\sigma)$ satisfies the condition $\mathrm{wt}(\mathbf{J}) < \mathrm{wt}(\mathbf{k})$.
Indeed, 
from the definition of $\mathrm{Part}(\mathbf{k}, M)$, 
the total weight of $\mathbf{k}$ is given by
\[
\mathrm{wt}(\mathbf{k}) = \mathrm{wt}(\mathbf{S}) + \sum_{i=1}^M (\mathrm{wt}(\mathbf{u}_i) + r_i - 1).
\]
Since $(\mathbf{u}_i, r_i) = ((1), 0)$ corresponds to the leading terms of the Baker-Campbell-Hausdorff sum ${\bf log}(\mathbf{exp}(-e_1)\mathbf{exp}(-e_0))$,
the condition that at least one pair $(\mathbf{u}_i, r_i)$ is not $((1), 0)$ implies the strict inequality
\[
\sum_{i=1}^M (\mathrm{wt}(\mathbf{u}_i) + r_i - 1) > 0.
\]
These lower weight terms 
specific to the $\ell$-adic Galois setting
are referred to in \cite[Subsection 4.3]{NW12} as ``$\ell$-adic error terms''.
\end{rem}

\begin{rem}
The appearance of lower weight terms in the $\ell$-adic Landen formula (Theorem \ref{M2}) stands in sharp contrast to the complex case (Theorem \ref{M1}) where the weights of the terms are strictly preserved. 
This structural discrepancy stems solely from the different relations satisfied by the variable at $\infty$. 
In the complex setting,
the variable $e_{\infty}^{\rm dR}$ in the complex KZ solution (\ref{crint1}) obeys the linear relation
\[
e_{\infty}^{\rm dR} = -e_1^{\rm dR} - e_0^{\rm dR}
\]
reflecting the sum of residues
$$\text{\rm Res}_{z=0}(\omega) + \text{\rm Res}_{z=1}(\omega) + \text{\rm Res}_{z=\infty}(\omega) = 0$$
where $\omega:=\frac{dz}{z}e_0^{\rm dR}+\frac{dz}{z-1}e_1^{\rm dR}$ is the canonical 1-form on $\mathbb{P}^1(\mathbb{C}) \backslash \{0,1,\infty\}$.
Conversely, in the $\ell$-adic Galois setting,
the variable
$e_{\infty}^{\rm B}$
in the $\ell$-adic Galois 1-cocycle (\ref{crint2}) is governed by the group-theoretic relation $l_0 l_1 l_{\infty} = 1$ in the topological fundamental group
\[
\pi_1^{\rm top}\left(\mathbb{P}^1(\mathbb{C}) \backslash \{0,1,\infty\}, \overrightarrow{01}\right)=\left<l_0,
l_1,
l_{\infty} \mid l_0 l_1 l_{\infty}=1 \right>,
\]
and forms the Baker-Campbell-Hausdorff sum
\begin{align*}
e_{\infty}^{\rm B} &= {\bf log}\left({\bf exp}\left(-e_1^{\rm B}\right){\bf exp}\left(-e_0^{\rm B}\right)\right)\\
&=-e_1^{\rm B}-e_0^{\rm B}+\text{$($higher-order terms$)$}.
\end{align*}
Consequently, the ``error terms'' in the $\ell$-adic Landen formula are precisely the manifestations of the higher-order terms in the BCH sum reflecting the non-abelian nature of the pro-$\ell$ \'etale fundamental group $\pi_1^\ellet\left(\mathbb{P}^1_{\overline{K}} \backslash \{0,1,\infty\},\overrightarrow{01}\right)$.
Our result (\ref{Main2}) reveals that the Goldberg integral (\ref{GPint}) plays an important role in achieving an explicit understanding of the action of $G_K$ on 
the pro-$\ell$ \'etale fundamental groupoid of $\mathbb{P}^1_{\overline{K}} \backslash \{0,1,\infty\}$ with $K$-rational base points. 
\end{rem}

{\renewcommand{\arraystretch}{1.55}
\tabcolsep = 0.3cm
\small
\begin{table}[hbtp]
\centering
\caption{Formulae to be proved}
\label{funcs}
  \begin{tabular}{|c||c|}  \hline
    $\ell$-adic Galois side & complex side  \\ \hline \hline

   \tiny$Li_k^\ell(z;\gamma,\sigma) \in \bQ_\ell$ & \tiny $Li_k(z;\gamma) \in \bC$ \\

   \tiny where $z$ is a $K$-rational base point of $\mathbb{P}^1 \backslash \{0,1,\infty\}$ & \tiny where $z$ is a $\bC$-rational base point of $\mathbb{P}^1 \backslash \{0,1,\infty\}$ \\

   \tiny and $(\gamma,\sigma) \in \pi_1^\ellet \left(\mathbb{P}^1_{\overline{K}} \backslash \{0,1,\infty\};\overrightarrow{01},{z}\right) \times G_K$. & \tiny and $\gamma \in \pi_1^{\rm top}\left(\mathbb{P}^1(\mathbb{C}) \backslash \{0,1,\infty\}; \overrightarrow{01}, z\right)$. \\ \hline \hline
    

    \tiny$\displaystyle Li^{\ell}_{\mathbf{k}}\left(\dfrac{z}{z-1};\gamma',\sigma\right)+(-1)^{1+\depth(\mathbf{k})} \sum_{\mathbf{J} \preceq \mathbf{k}} Li^{\ell}_{\mathbf{J}}\left(z;\gamma,\sigma\right)
		$ & \tiny$\displaystyle Li_{\mathbf{k}}\left(\dfrac{z}{z-1};\gamma'\right)+(-1)^{1+\depth(\mathbf{k})} \sum_{\mathbf{J} \preceq \mathbf{k}} Li_{\mathbf{J}}\left(z;\gamma\right)=0$ \\ 

     \tiny $\displaystyle =
\sum_{M=1}^{\depth(\mathbf{k})} (-1)^{M+\depth(\mathbf{k})} \left(\sum_{\mathrm{Part}(\mathbf{k}, M)^\ast} \left(\sum_{\mathbf{J} \preceq \mathbf{S}} Li^{\ell}_{\mathbf{J}}\left({z};\gamma,\sigma\right)\right) \prod_{i=1}^M \mu_{\mathbf{u}_{i}, r_i}\right)$ & \cite[Proposition 9]{OU04}  \\

Theorem \ref{M2} & Theorem \ref{M1} \\ \hline \hline

   \tiny $\displaystyle {Li}^{\ell}_n\left(\frac{z}{z-1};\gamma',\sigma\right)
+\sum_{\mathbf{J} \preceq n} {Li}^{\ell}_{\mathbf{J}}(z;\gamma,\sigma)$  &  \\ 

   \tiny $\displaystyle = \sum_{m=1}^{n-1} \frac{(-1)^{n-m+1}}{(n-m+1)!} {Li}^{\ell}_m\left(\frac{z}{z-1};\gamma',\sigma\right)$  & \tiny $\displaystyle {Li}^{}_n\left(\frac{z}{z-1};\gamma'\right)
+\sum_{\mathbf{J} \preceq n} {Li}^{}_{\mathbf{J}}(z;\gamma)=0$ \\

   \tiny $\displaystyle = \sum_{j=1}^{n-1} \frac{(-1)^{j+1} B_{j}}{j!} \left(\sum_{\mathbf{J} \preceq n-j} {Li}^{\ell}_{\mathbf{J}}(z;\gamma,\sigma)\right)$  &  \\

Theorem \ref{M3} & Theorem \ref{M1}~~~($\mathbf{k}=n$) \\ \hline

\tiny $\displaystyle Li^{\ell}_{2}\left(\dfrac{z}{z-1};\gamma',\sigma\right) + Li^{\ell}_{2}(z;\gamma,\sigma) + Li^{\ell}_{1,1}(z;\gamma,\sigma)$  &  \\ 

   \tiny $\displaystyle =\frac{1}{2} Li^{\ell}_1\left(\frac{z}{z-1};\gamma',\sigma\right)$  & \tiny $\displaystyle Li^{}_{2}\left(\dfrac{z}{z-1};\gamma'\right) + Li^{}_{2}(z;\gamma) + Li^{}_{1,1}(z;\gamma)=0$ \\

   \tiny $\displaystyle = -\frac{1}{2} Li^{\ell}_{1}(z;\gamma,\sigma)$  &  \\

Proposition \ref{sceq2} & Theorem \ref{M1}~~~($\mathbf{k}=2$) \\ \hline

\tiny $\displaystyle Li^{\ell}_{3}\left(\dfrac{z}{z-1};\gamma',\sigma\right) +\sum_{\mathbf{J} \preceq 3} {Li}^{\ell}_{\mathbf{J}}(z;\gamma,\sigma)$  &  \\ 

   \tiny $\displaystyle = -\frac{1}{6} Li^{\ell}_1\left(\frac{z}{z-1};\gamma',\sigma\right) + \frac{1}{2} Li^{\ell}_2\left(\frac{z}{z-1};\gamma',\sigma\right)$  & \tiny $\displaystyle Li^{}_{3}\left(\dfrac{z}{z-1};\gamma'\right) +\sum_{\mathbf{J} \preceq 3} {Li}^{}_{\mathbf{J}}(z;\gamma)=0$ \\

   \tiny $\displaystyle = -\frac{1}{12} Li^{\ell}_{1}(z;\gamma,\sigma) - \frac{1}{2} Li^{\ell}_{2}(z;\gamma,\sigma) - \frac{1}{2} Li^{\ell}_{1,1}(z;\gamma,\sigma)$  &   \\

Proposition \ref{sceq3} & Theorem \ref{M1}~~~($\mathbf{k}=3$) \\ \hline

\tiny $\displaystyle Li^{\ell}_{4}\left(\dfrac{z}{z-1};\gamma',\sigma\right) + \sum_{\substack{\mathbf{J} \in \mathbb{N}^d,~d \in \mathbb{N} \\ {\rm wt}(\mathbf{J})=4}} Li^{\ell}_{\mathbf{J}}(z;\gamma,\sigma)$  &  \\ 

   \tiny $\displaystyle = -\frac{1}{2} \left( Li^{\ell}_{3}(z;\gamma,\sigma) + Li^{\ell}_{2,1}(z;\gamma,\sigma) + Li^{\ell}_{1,2}(z;\gamma,\sigma)\right.$  & \tiny $\displaystyle {Li}^{}_4\left(\frac{z}{z-1};\gamma'\right)
+\sum_{\mathbf{J} \preceq 4} {Li}^{}_{\mathbf{J}}(z;\gamma)=0$ \\

   \tiny $\displaystyle \quad \quad \quad \left.+ Li^{\ell}_{1,1,1}(z;\gamma,\sigma) \right) - \frac{1}{12} \left( Li^{\ell}_{2}(z;\gamma,\sigma) + Li^{\ell}_{1,1}(z;\gamma,\sigma) \right) $  &  \\

Proposition \ref{sceq4} & Theorem \ref{M1}~~~($\mathbf{k}=4$) \\ \hline

\tiny $\displaystyle Li^{\ell}_{2,3}\left(\frac{z}{z-1};\gamma',\sigma\right)-
\sum_{\mathbf{J}\preceq(2,3)} Li^{\ell}_{\mathbf{J}}(z;\gamma,\sigma)$  &  \\ 

   \tiny $\displaystyle = -\frac{1}{120} Li^{\ell}_1(z;\gamma,\sigma) - \frac{1}{12} Li^{\ell}_2(z;\gamma,\sigma) - \frac{1}{24} Li^{\ell}_{1,1}(z;\gamma,\sigma)$  & \\

   \tiny $\displaystyle - \frac{1}{6} Li^{\ell}_3(z;\gamma,\sigma) - \frac{1}{12} Li^{\ell}_{2,1}(z;\gamma,\sigma) - \frac{1}{12} Li^{\ell}_{1,2}(z;\gamma,\sigma)$  & \tiny $\displaystyle Li^{}_{2,3}\left(\frac{z}{z-1};\gamma'\right)-
\sum_{\mathbf{J}\preceq(2,3)} Li^{}_{\mathbf{J}}(z;\gamma)=0$ \\

   \tiny $\displaystyle + \frac{1}{2} \left( Li^{\ell}_{2,2}(z;\gamma,\sigma) + Li^{\ell}_{2,1,1}(z;\gamma,\sigma) \right.$  &  \\

   \tiny $\displaystyle \left. + Li^{\ell}_{1,1,2}(z;\gamma,\sigma) + Li^{\ell}_{1,1,1,1}(z;\gamma,\sigma) \right)$  &  \\

Proposition \ref{sceq23} & Theorem \ref{M1}~~~($\mathbf{k}=(2,3)$) \\ \hline
   
  \end{tabular}
\end{table}}

\section{Key theorems on non-commutative formal power series in two variables}

In this section, let $K$ be a field of characteristic $0$.
Let $K\langle\langle e_0, e_1 \rangle\rangle$ be the non-commutative formal power series ring in two non-commuting variables $e_0$ and $e_1$ over $K$.
We define $e_\infty$ by
\[
e_\infty := -e_1-e_0.
\]
Furthermore, let $e'_\infty$ denote the Baker-Campbell-Hausdorff sum given by
\begin{align*}
e'_\infty &:= \mathbf{log}(\mathbf{exp}(-e_1)\mathbf{exp}(-e_0))\\
&=\sum_{m=1}^{\infty} \frac{(-1)^{m-1}}{m}\left(\left(\sum_{n=0}^{\infty}\frac{1}{n!}{(-e_1)}^n\right)\left(\sum_{l=0}^{\infty}\frac{1}{l!}{(-e_0)}^l\right)-1\right)^m \\
&=-e_1-e_0+(\text{higher-order terms}).
\end{align*}
Our goal in this section is to investigate the coefficient of the monomial
$$W_{\mathbf{k}} := e_0^{k_d-1}e_1 e_0^{k_{d-1}-1}e_1 \cdots e_0^{k_1-1}e_1$$
in the expansion of
\[
f(e_0,e_\infty),~f(e_0,e'_\infty) \in K\langle\langle e_0, e_1 \rangle\rangle
\]
for arbitrary $f(e_0,e_1) \in K\langle\langle e_0, e_1 \rangle\rangle$ and $\mathbf{k}=(k_1,\cdots,k_d) \in \mathbb{N}^d$.
We begin by establishing some notation.

\begin{dfn}[Weight and depth of multi-indices]
An element of $\bigcup_{d=1}^{\infty} \mathbb{N}^d$ is called a multi-index.
For a multi-index $\mathbf{k} = (k_1, \dots, k_d) \in \mathbb{N}^d$, we define its weight and depth as follows:
\[
\mathrm{wt}(\mathbf{k}) := \sum_{i=1}^d k_i, \quad \depth(\mathbf{k}) := d.
\]
\end{dfn}

\begin{dfn}[Refinement of multi-indices]\label{saibun}
For multi-indices $\mathbf{k}=(k_1, \dots, k_d) \in \mathbb{N}^d$ and $\mathbf{J}=(j_1, \dots, j_l) \in \mathbb{N}^l$, we say that $\mathbf{J}$ is a refinement of $\mathbf{k}$,
denoted by
\[
\mathbf{J} \preceq \mathbf{k}
\]
if $\mathbf{k}$ is obtained by partial sums of the components of $\mathbf{J}$ preserving the order.
More precisely, $\mathbf{J} \preceq \mathbf{k}$ if there exists a multi-index $(m_1, \dots, m_d) \in \mathbb{N}^d$ such that $\sum_{i=1}^d m_i = l$ and for any $i=1, \dots, d$,
\[
k_i = \sum_{q=1}^{m_i} j_{S_{i-1} + q}
\]
where $S_0 := 0$ and $S_i := \sum_{r=1}^i m_r$.

\end{dfn}

\begin{dfn}[Monomial $W_{\mathbf{k}}$ and coefficient $\operatorname{Coeff}_{\mathbf{k}}$]
For a multi-index $\mathbf{k} = (k_1, \dots, k_d) \in \mathbb{N}^d$, we define the corresponding non-commutative monomial $W_{\mathbf{k}}$ as follows:
\[
W_{\mathbf{k}} := e_0^{k_d-1}e_1 e_0^{k_{d-1}-1}e_1 \cdots e_0^{k_1-1}e_1.
\]
For any $f(e_0, e_1) \in K\langle\langle e_0, e_1 \rangle\rangle$, we denote by 
\[
\operatorname{Coeff}_{\mathbf{k}}(f(e_0, e_1)) \in K
\]
the coefficient of the term $W_{\mathbf{k}}$ in $f(e_0, e_1)$.
\end{dfn}

\begin{rem}
Any formal power series $f(e_0, e_1) \in K\langle\langle e_0, e_1 \rangle\rangle$ can be uniquely written in the form
\[
f(e_0, e_1) = \sum_{\mathbf{k}} \operatorname{Coeff}_{\mathbf{k}}(f(e_0, e_1)) W_{\mathbf{k}} + (\text{terms ending in } e_0).
\]
\end{rem}

\begin{thm}[Coefficient of $W_{\mathbf{k}}$ in $f(e_0, e_\infty)$]\label{thm22}
Let $f(e_0, e_1) \in K\langle\langle e_0, e_1 \rangle\rangle$ and $\mathbf{k}
\in \mathbb{N}^{{\rm dp}(\mathbf{k})}$.
The coefficient $\operatorname{Coeff}_{\mathbf{k}}(f(e_0, e_\infty))$ of the term $W_{\mathbf{k}}$
in the expansion of $f(e_0, e_\infty)\left(=f\left(e_0, -e_1-e_0\right)\right) \in K\langle\langle e_0, e_1 \rangle\rangle$ is given by
\[
\operatorname{Coeff}_{\mathbf{k}}(f(e_0, e_\infty)) = \sum_{\mathbf{J} \preceq \mathbf{k}} (-1)^{\depth(\mathbf{J})} \operatorname{Coeff}_{\mathbf{J}}(f(e_0, e_1)).
\]
In particular, for $n \in \mathbb{N}$, 
\[
\operatorname{Coeff}_{n}(f(e_0, e_\infty)) = \sum_{\substack{\mathbf{J} \in \mathbb{N}^{l},~l \in \mathbb{N}, \\ {\rm wt}(\mathbf{J})=n}} (-1)^{\depth(\mathbf{J})} \operatorname{Coeff}_{\mathbf{J}}(f(e_0, e_1)).
\]
\end{thm}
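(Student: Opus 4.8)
The plan is to expand $f$ in the monomial basis and follow, coefficient by coefficient, the effect of the substitution $e_1 \mapsto e_\infty = -e_1-e_0$; since $e_\infty$ has no constant term the substitution is well defined, and the whole argument is an elementary book-keeping on words. Write $f(e_0,e_1) = \sum_{w} c_w\, w$ with $w$ ranging over all words in $\{e_0,e_1\}^{\ast}$ and $c_w \in K$ the coefficient of $w$, so that $c_{W_{\mathbf{J}}} = \operatorname{Coeff}_{\mathbf{J}}(f(e_0,e_1))$. The substitution sends a word $w = e_{i_1}\cdots e_{i_n}$ to the product of the images of its letters; as $e_0$ is untouched while each occurrence of $e_1$ becomes the two-term sum $-e_1-e_0$, expanding the product yields
\[
w\big|_{e_1\mapsto e_\infty} = (-1)^{n_1(w)}\sum_{\varepsilon} w_\varepsilon ,
\]
where $n_1(w)$ is the number of $e_1$'s in $w$, the index $\varepsilon$ runs over all functions assigning to each occurrence of $e_1$ in $w$ one of the two symbols $e_0, e_1$, and $w_\varepsilon \in \{e_0,e_1\}^{\ast}$ is the word obtained by performing these replacements (the $e_0$-letters being kept). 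Hence $\operatorname{Coeff}_{\mathbf{k}}(f(e_0,e_\infty)) = \sum_w c_w (-1)^{n_1(w)}\, \#\{\varepsilon : w_\varepsilon = W_{\mathbf{k}}\}$.

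Next I would determine which pairs $(w,\varepsilon)$ contribute. View $W_{\mathbf{k}}$, for $\mathbf{k} = (k_1,\dots,k_d)$ with $d = \depth(\mathbf{k})$, as occupying positions $1,\dots,\mathrm{wt}(\mathbf{k})$, with its $d$ occurrences of $e_1$ at the positions terminating the successive blocks $e_0^{k_d-1}e_1,\ \dots,\ e_0^{k_1-1}e_1$ and $e_0$ at the remaining $\mathrm{wt}(\mathbf{k})-d$ positions. Since $e_0$-letters are inert under the substitution, $w_\varepsilon = W_{\mathbf{k}}$ forces every $e_0$ of $w$ to sit over an $e_0$-position of $W_{\mathbf{k}}$; consequently $w$ has $e_1$ at all $d$ of the $e_1$-positions of $W_{\mathbf{k}}$ together with some subset $A$ of its $e_0$-positions and $e_0$ elsewhere, and then $\varepsilon$ is uniquely determined (it sends the $d$ original $e_1$'s to $e_1$ and the letters over $A$ to $e_0$). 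So for each subset $A$ of the $e_0$-positions of $W_{\mathbf{k}}$ there is exactly one contributing pair, whose word $w(A)$ has $n_1(w(A)) = d+|A|$; therefore $\operatorname{Coeff}_{\mathbf{k}}(f(e_0,e_\infty)) = \sum_A (-1)^{d+|A|} c_{w(A)}$, a finite sum, so convergence is not an issue.

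It remains to identify the parameter $A$ with the refinement relation of Definition~\ref{saibun}. Since the last position of $W_{\mathbf{k}}$ is an $e_1$-position, $w(A)$ again ends in $e_1$, so $w(A) = W_{\mathbf{J}(A)}$ for a unique multi-index $\mathbf{J}(A)$; inserting an $e_1$ inside a block $e_0^{k_i-1}e_1$ merely subdivides it, so passing from $W_{\mathbf{k}}$ to $w(A)$ replaces each entry $k_i$ by an ordered tuple of positive integers with sum $k_i$ — that is, $\mathbf{k}$ is recovered from $\mathbf{J}(A)$ by partial sums, i.e. $\mathbf{J}(A)\preceq\mathbf{k}$ — and conversely each $\mathbf{J}\preceq\mathbf{k}$ arises from a unique such $A$ (its subdivision points). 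As every element of $A$ increases the block count by one, $|A| = \depth(\mathbf{J}(A)) - d$, so $(-1)^{d+|A|} = (-1)^{\depth(\mathbf{J}(A))}$ and
\[
\operatorname{Coeff}_{\mathbf{k}}(f(e_0,e_\infty)) = \sum_{\mathbf{J}\preceq\mathbf{k}} (-1)^{\depth(\mathbf{J})}\operatorname{Coeff}_{\mathbf{J}}(f(e_0,e_1)),
\]
which is the assertion; the displayed special case follows because $W_n = e_0^{n-1}e_1$ is a single block, so $\mathbf{J}\preceq(n)$ means exactly $\mathrm{wt}(\mathbf{J})=n$. The one point requiring care is this last paragraph together with the uniqueness of $\varepsilon$: one must verify that $A\mapsto \mathbf{J}(A)$ really is the refinement bijection of Definition~\ref{saibun}, in the correct direction, with the depth/sign accounting $|A| = \depth(\mathbf{J}) - \depth(\mathbf{k})$; once these are settled the theorem drops out.
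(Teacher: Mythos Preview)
Your proof is correct and follows essentially the same approach as the paper: both arguments expand the substitution $e_1\mapsto -e_1-e_0$ letter by letter and set up a bijection between refinements $\mathbf{J}\preceq\mathbf{k}$ and the choices in the expansion, with the sign $(-1)^{\depth(\mathbf{J})}$ coming from the number of $e_1$'s in $W_{\mathbf{J}}$. The only cosmetic difference is direction: the paper fixes $\mathbf{J}$, expands $\varphi(W_{\mathbf{J}})=(-1)^{\depth(\mathbf{J})}\prod_{i}(e_0^{j_i}+e_0^{j_i-1}e_1)$ over subsets $A\subseteq\{1,\dots,\depth(\mathbf{J})\}$, and picks out the unique $A$ giving $W_{\mathbf{k}}$, whereas you fix $\mathbf{k}$ and determine which words $w$ (necessarily of the form $W_{\mathbf{J}}$ since the last letter must be $e_1$) can produce it, parametrized by subsets $A$ of the $e_0$-positions of $W_{\mathbf{k}}$; your observation that words ending in $e_0$ cannot contribute is exactly the paper's preliminary step of discarding the ``terms ending in $e_0$''.
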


\begin{proof}
Let $\varphi: K\langle\langle e_0, e_1 \rangle\rangle \to K\langle\langle e_0, e_1 \rangle\rangle$ be the ring homomorphism defined by
$
\varphi(e_0) = e_0$ and $\varphi(e_1) = e_\infty~(=-e_1-e_0)$.
Writing
\[
f(e_0, e_1) = \sum_{\mathbf{J}} \operatorname{Coeff}_{\mathbf{J}}(f(e_0, e_1)) W_{\mathbf{J}} + (\text{terms ending in } e_0),
\]
we have
\[
f(e_0, e_\infty) = \sum_{\mathbf{J}} \operatorname{Coeff}_{\mathbf{J}}(f(e_0, e_1)) \varphi(W_{\mathbf{J}}) + \varphi(\text{terms ending in } e_0).
\]
Since $\varphi(e_0) = e_0$,
the part $\varphi(\text{terms ending in } e_0)$ consists of terms ending in $e_0$. Therefore, to compute $\operatorname{Coeff}_{\mathbf{k}}(f(e_0, e_\infty))$, it suffices to identify the coefficient of $W_{\mathbf{k}}$ in the expansion of $\sum_{\mathbf{J}} \operatorname{Coeff}_{\mathbf{J}}(f(e_0, e_1)) \varphi(W_{\mathbf{J}})$.
Let $\mathbf{J} = (j_1, \dots, j_l) \in \mathbb{N}^l$. 
Then,
\begin{align*}
\varphi(W_{\mathbf{J}}) = (-1)^{\depth(\mathbf{J})} \prod_{i=l}^{1} (e_0^{j_i} + e_0^{j_i-1}e_1)
\end{align*}
where the product is ordered from $i=l$ down to $i=1$. Thus,
\[
\varphi\left(\operatorname{Coeff}_{\mathbf{J}}(f(e_0, e_1)) W_{\mathbf{J}}\right) = (-1)^{\depth(\mathbf{J})} \operatorname{Coeff}_{\mathbf{J}}(f(e_0, e_1)) \Pi_{\mathbf{J}}
\]
where
\[
\Pi_{\mathbf{J}} := \prod_{i=l}^{1} (e_0^{j_i} + e_0^{j_i-1}e_1).
\]
Expanding $\Pi_{\mathbf{J}}$, we obtain
\[
\Pi_{\mathbf{J}} = \sum_{A \subseteq \{1, \dots, l\}} \Theta_{A} \quad \text{where}\quad \Theta_{A} := \prod_{i=l}^{1} \theta_{i}^{(A)}\quad \text{and}\quad \theta_{i}^{(A)} := \begin{cases} e_0^{j_i-1}e_1 & (i \in A), \\ e_0^{j_i} & (i \notin A). \end{cases}
\]
The monomial $\Theta_{A}$ equals $W_{\mathbf{k}} = e_0^{k_d-1}e_1 \cdots e_0^{k_1-1}e_1$ if and only if there exists a multi-index $(m_1, \dots, m_d) \in \mathbb{N}^d$ such that $\sum_{i=1}^d m_i = l$ and
\[
k_i = \sum_{q=1}^{m_i} j_{S_{i-1} + q} \quad \text{for all } i=1, \dots, d
\]
hold
and the set $A$ is given by
\[
A=\left\{1,~1+m_1,~1+m_1+m_2, \dots, 1+\sum_{t=1}^{d-1}m_t\right\}.
\]
This condition is equivalent to $\mathbf{J} \preceq \mathbf{k}$, and such a set $A$ is unique for a given refinement.
Therefore, the term $W_{\mathbf{k}}$ appears in the expansion of $\Pi_{\mathbf{J}}$ if and only if $\mathbf{J} \preceq \mathbf{k}$, with coefficient $1$.
Consequently,
\begin{align*}
\operatorname{Coeff}_{\mathbf{k}}(f(e_0, e_\infty))
&= \operatorname{Coeff}_{\mathbf{k}}\left(\sum_{\mathbf{J}} (-1)^{\depth(\mathbf{J})} \operatorname{Coeff}_{\mathbf{J}}(f(e_0, e_1)) \Pi_{\mathbf{J}}\right) \\
&= \sum_{\mathbf{J} \preceq \mathbf{k}} (-1)^{\depth(\mathbf{J})} \operatorname{Coeff}_{\mathbf{J}}(f(e_0, e_1)).
\end{align*}
\end{proof}

Next, we consider the Baker-Campbell-Hausdorff sum
\[
e'_\infty := \mathbf{log}(\mathbf{exp}(-e_1)\mathbf{exp}(-e_0)).
\]

\begin{dfn}[The coefficient $\mu_{\mathbf{u}, r}$ of the BCH sum $e'_\infty$]
For $\mathbf{u}=(u_1,\ldots,u_m) \in \mathbb{N}^{m}$ and $r \in \mathbb{Z}_{\ge 0}$,
let
\[
\mu_{\mathbf{u}, r} \in \mathbb{Q}
\]
denote the coefficient of the term $W_{\mathbf{u}} e_0^r$
in the expansion of $e'_\infty$.
That is,
\begin{align*}
e'_\infty &= -e_0 +\sum_{\substack{\mathbf{u}=(u_1,\ldots,u_m) \in \mathbb{N}^{m},\\m \in \mathbb{N},~r \in \mathbb{Z}_{\ge 0}}} \mu_{\mathbf{u}, r} W_{\mathbf{u}} e_0^r\\
&= -e_0 +\sum_{\substack{\mathbf{u}=(u_1,\ldots,u_m) \in \mathbb{N}^{m},\\m \in \mathbb{N},~r \in \mathbb{Z}_{\ge 0}}} \mu_{\mathbf{u}, r} \left(e_0^{u_m-1}e_1 e_0^{u_{m-1}-1}e_1 \cdots e_0^{u_1-1}e_1\right) e_0^r.
\end{align*}
Note that the coefficient corresponding to the leading term $-e_1$ is $\mu_{(1), 0} = -1$.
\end{dfn}

To provide an explicit formula for $\mu_{\mathbf{u}, r} \in \mathbb{Q}$, we use the Goldberg polynomial.

\begin{dfn}[Goldberg polynomials]\label{def:goldberg_poly}
For an integer $v>0$, we define the polynomial $G_v(t) \in \mathbb{Q}[t]$ by the following recurrence relation:
\[
G_1(t) = 1,\quad v G_v(t) = \frac{d}{dt} \left( t(t-1) G_{v-1}(t) \right)\quad (v \ge 2).
\]
\end{dfn}

\begin{ex}[Goldberg polynomials for $v=2,\cdots,6$]
\begin{align*}
&G_2(t) = t - \frac{1}{2}, \quad
G_3(t) = t^2 - t + \frac{1}{6}, \quad
G_4(t) = t^3 - \frac{3}{2}t^2 + \frac{7}{12}t - \frac{1}{24},\\
&G_5(t) = t^4 - 2t^3 + \frac{5}{4}t^2 - \frac{1}{4}t + \frac{1}{120},\quad 
G_6(t) = t^5 - \frac{5}{2}t^4 + \frac{13}{6}t^3 - \frac{3}{4}t^2 + \frac{31}{360}t - \frac{1}{720}.
\end{align*}
\end{ex}

\noindent
With these preparations, $\mu_{\mathbf{u}, r}$ is given by the following integral formula. 

\begin{prop}[Explicit formula for $\mu_{\mathbf{u}, r}$]\label{prop:integral_rep_corrected}
For $\mathbf{u} = (u_1, \dots, u_m) \in \mathbb{N}^m$ and $r \in \mathbb{Z}_{\ge 0}$,
let
\[
(v_1, \dots, v_L)
\]
be the multi-index obtained from the sequence $(u_m-1, 1, \dots, u_1-1, 1, r)$ by recursively contracting any triplet $(a, 0, b)$ into $a+b$ and then removing all leading and trailing zeros.
Then, the coefficient $\mu_{\mathbf{u}, r}$ of the term $W_{\mathbf{u}}e_0^r$ in $e'_\infty=\mathbf{log}(\mathbf{exp}(-e_1)\mathbf{exp}(-e_0))$ is given by
\[
\mu_{\mathbf{u}, r} = \sigma_{\mathbf{u}, r} \int_0^1 t^{\lfloor L/2 \rfloor} (t-1)^{\lfloor (L-1)/2 \rfloor} \prod_{i=1}^L G_{v_i}(t) \, dt.
\]
where the sign factor $\sigma_{\mathbf{u}, r}$ is defined
\[
\sigma_{\mathbf{u}, r} = \begin{cases}
-1 & (\text{if } u_m > 1), \\
(-1)^{N} & (\text{if } u_m = 1)
\end{cases}
\]
with $N := \mathrm{wt}(\mathbf{u}) + r$ being the total weight of the monomial $W_{\mathbf{u}} e_0^r$.
\end{prop}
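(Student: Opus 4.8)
The plan is to recognize $\mu_{\mathbf{u}, r}$, up to an explicit sign, as a coefficient in the classical Hausdorff series $\mathbf{log}(\mathbf{exp}(X)\mathbf{exp}(Y))$, and then to apply Goldberg's coefficient formula re-expressed in the parametrization used here.

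First I would normalize the variables, setting $X := -e_1$ and $Y := -e_0$, so that $e'_\infty = \mathbf{log}(\mathbf{exp}(X)\mathbf{exp}(Y))$ is the Hausdorff series in $X, Y$. For $\mathbf{u}=(u_1,\dots,u_m) \in \mathbb{N}^m$ and $r \in \mathbb{Z}_{\ge 0}$, the monomial $W_{\mathbf{u}} e_0^r = e_0^{u_m-1} e_1 e_0^{u_{m-1}-1} e_1 \cdots e_0^{u_1-1} e_1 e_0^{r}$ has total degree $N = \mathrm{wt}(\mathbf{u}) + r$ and equals $(-1)^N$ times the word
\[
\tilde{w} := Y^{u_m-1} X Y^{u_{m-1}-1} X \cdots Y^{u_1-1} X Y^{r},
\]
so $\mu_{\mathbf{u}, r} = (-1)^N\, c(\tilde w)$, where $c(w)$ denotes the coefficient of a word $w$ in $\mathbf{log}(\mathbf{exp}(X)\mathbf{exp}(Y))$. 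Next I would observe that the multi-index $(v_1, \dots, v_L)$ of the statement is exactly the run-length composition of $\tilde w$: the sequence $(u_m-1, 1, \dots, u_1-1, 1, r)$ lists the lengths of the successive constant blocks $Y\cdots Y,\ X\cdots X,\ \dots$ of $\tilde w$ before any merging, contracting a triplet $(a,0,b)$ to $a+b$ is precisely the merging of two $X$-runs (or two $Y$-runs) separated by an empty run, and deleting leading and trailing zeros records whether $\tilde w$ begins, respectively ends, with $X$ rather than $Y$. In particular $\tilde w$ begins with $Y$ exactly when $u_m > 1$ and with $X$ exactly when $u_m = 1$, which is the dichotomy built into $\sigma_{\mathbf{u}, r}$; one then checks immediately that $(-1)^N \sigma_{\mathbf{u}, r}$ equals $1$ if $\tilde w$ starts with $X$ and $(-1)^{N+1}$ if $\tilde w$ starts with $Y$.

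With this dictionary, the proposition is equivalent to the following form of Goldberg's theorem: for a word $w$ in $X, Y$ of length $N$ with run-length composition $(v_1, \dots, v_L)$,
\[
c(w) = \varepsilon(w)\int_0^1 t^{\lfloor L/2 \rfloor} (t-1)^{\lfloor (L-1)/2 \rfloor} \prod_{i=1}^L G_{v_i}(t)\, dt,
\qquad
\varepsilon(w) := \begin{cases} 1 & (w \text{ starts with } X),\\ (-1)^{N+1} & (w \text{ starts with } Y).\end{cases}
\]
Combined with the sign computation above, this gives $\mu_{\mathbf{u}, r} = (-1)^N \varepsilon(\tilde w)\int_0^1(\cdots) = \sigma_{\mathbf{u}, r}\int_0^1(\cdots)$, as desired. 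I would establish the displayed identity either by citing Goldberg's original work and matching the normalizations, or---to keep this section self-contained---directly from the series $\mathbf{log}(\mathbf{exp}(X)\mathbf{exp}(Y)) = \sum_{n \ge 1} \frac{(-1)^{n-1}}{n}\bigl(\sum_{(i,j)\ne(0,0)} \tfrac{X^i Y^j}{i!\,j!}\bigr)^n$ recorded above: extracting $c(w)$ turns the right-hand side into a sum over all ways of cutting $w$ into $n$ consecutive blocks of the form $X^i Y^j$, weighted by $\frac{(-1)^{n-1}}{n}\prod_s \frac{1}{i_s!\,j_s!}$, and re-summing over $n$ after introducing an auxiliary variable $t$ via beta-type integrals $\int_0^1 t^{k-1}(1-t)^{\ell-1}\,dt$ collapses the contribution of the $i$-th run to $G_{v_i}(t)$; the cleanest way to make that last identification is to verify that the polynomial produced satisfies $G_1(t) = 1$ and the recurrence $v\,G_v(t) = \frac{d}{dt}\bigl(t(t-1)G_{v-1}(t)\bigr)$ of Definition~\ref{def:goldberg_poly}, the recurrence amounting to an integration by parts. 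In either route I would also use the symmetry $G_v(1-t) = (-1)^v G_v(t)$, immediate from that recurrence since $t(t-1)$ is invariant under $t\mapsto 1-t$, together with the elementary relations expressing $c(\overleftarrow{w})$ and the effect of swapping $X \leftrightarrow Y$ in terms of $c(w)$ (both coming from $\mathbf{log}(\mathbf{exp}(Y)\mathbf{exp}(X)) = -\,\mathbf{log}(\mathbf{exp}(-X)\mathbf{exp}(-Y))$); these reduce the two cases of $\varepsilon(w)$ to one another and fix the endpoint exponents $\lfloor L/2 \rfloor$ and $\lfloor (L-1)/2 \rfloor$, which depend on $w$ only through $L$ since the last letter of $w$ is determined by its first letter and $L$.

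The main obstacle is bookkeeping, not concept: one must track the parities of $N$ and $L$ (note $L$ is odd precisely when $w$ begins and ends with the same letter), the interplay of the sign $(-1)^N$ coming from $X=-e_1,\ Y=-e_0$ with the intrinsic sign $\varepsilon(w)$, and the exact powers of $t$ and $t-1$ at the two endpoints, where an off-by-one is easy to introduce; I would cross-check the final formula against the low-weight part of $e'_\infty$, e.g. the cases $\mathbf{u}\in\{(1),(2)\}$ with small $r$, corresponding to the words $X, Y, XY, YX, X^2Y, \dots$. If one reproves Goldberg's formula from the series rather than quoting it, the combinatorial re-summation producing the polynomials $G_v(t)$ is the genuine technical heart of the argument, and recognizing them through the recurrence---after the auxiliary integral has been introduced---is the step that keeps that computation manageable.
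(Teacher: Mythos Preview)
Your approach is essentially the same as the paper's: both set $X=-e_1$, $Y=-e_0$ so that $e'_\infty=\mathbf{log}(\mathbf{exp}(X)\mathbf{exp}(Y))$, identify $(v_1,\dots,v_L)$ as the run-length composition of the resulting word, and then invoke Goldberg's Theorem~1 together with the relation $c_y=(-1)^{N-1}c_x$ to obtain the sign factor $\sigma_{\mathbf{u},r}$ in the two cases $u_m>1$ and $u_m=1$. The paper simply cites Goldberg rather than sketching a self-contained reproof via beta integrals and the recurrence for $G_v(t)$, so your alternative route is extra content, but the core argument and sign bookkeeping coincide.
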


\begin{proof}
Set $X=-e_1$ and $Y=-e_0$.
Then $e'_\infty = \mathbf{log}(\mathbf{exp}(-e_1)\mathbf{exp}(-e_0)) = \mathbf{log}(\mathbf{exp}(X)\mathbf{exp}(Y))$ and
\[
W_{\mathbf{u}}e_0^r = (-1)^N Y^{u_m-1} X \cdots Y^{u_1-1} X Y^r.
\]
According to Goldberg \cite[THEOREM 1]{G56}, the coefficient $c_x$ of the monomial starting with $X$ and having the exponent sequence $(v_1, \dots, v_L)$, i.e., $X^{v_1}Y^{v_2}X^{v_3}\cdots$, in the expansion of $\mathbf{log}(\mathbf{exp}(X)\mathbf{exp}(Y))$ is given by the integral
\[
I := \int_0^1 t^{\lfloor L/2 \rfloor} (t-1)^{\lfloor (L-1)/2 \rfloor} \prod_{i=1}^L G_{v_i}(t) \, dt,
\]
and the coefficient $c_y$ of the monomial starting with $Y$ satisfies $c_y = (-1)^{N-1} c_x$.

Case $u_m > 1$: The monomial $W_{\mathbf{u}}e_0^r$ starts with $e_0$, i.e., $Y$.
The required coefficient is $(-1)^N c_y$, so
\[
\mu_{\mathbf{u}, r} = (-1)^N c_y = (-1)^N (-1)^{N-1} I = (-1)^{2N-1} I = -I.
\]

Case $u_m = 1$: The monomial $W_{\mathbf{u}}e_0^r$ starts with $e_1$, i.e., $X$.
The required coefficient is $(-1)^N c_x$, so
\[
\mu_{\mathbf{u}, r} = (-1)^N c_x = (-1)^N I.
\]

Combining these gives the desired formula.
\end{proof}

\begin{ex}[Calculation of coefficients $\mu_{\mathbf{u}, r}$]
We illustrate two concrete examples of the formula.

\begin{enumerate}
    \item {\bf Case $\mathbf{u}=(3)$ and $r=0$ $($Weight $3$, Depth $1$$)$:} \\
    We consider the coefficient of the monomial $W_{(3)} = e_0^{3-1}e_1 = e_0^2 e_1$.
    The sequence of exponents is derived from $(u_1-1, 1, r) = (2, 1, 0)$. Removing zeros, we get the index tuple $(2, 1)$, so $L=2$.
    The formula involves $G_2(t)$ and $G_1(t)$:
    \[
    \int_0^1 t^{\lfloor 2/2 \rfloor} (t-1)^{\lfloor 1/2 \rfloor} G_2(t) G_1(t) \, dt
    = \int_0^1 t \left(t-\frac{1}{2}\right) \, dt
    = \frac{1}{12}.
    \]
    Since $u_m=u_1=3 > 1$, the sign is $\sigma_{(3), 0} = -1$. Thus,
    \[
    \mu_{(3), 0} = - \frac{1}{12}.
    \]
    This matches the coefficient of $e_0^2 e_1$ in the expansion of $e'_\infty=\mathbf{log}(\mathbf{exp}(-e_1)\mathbf{exp}(-e_0))$.

    \item {\bf Case $\mathbf{u}=(2, 1)$ and $r=0$ $($Weight $3$, Depth $2$$)$:} \\
    We consider the coefficient of the monomial
    \[
    W_{(2,1)} = e_0^{u_2-1}e_1 e_0^{u_1-1}e_1 = e_0^{1-1}e_1 e_0^{2-1}e_1 = e_1 e_0 e_1.
    \]
    The sequence is derived from $(u_2-1, 1, u_1-1, 1, r) = (0, 1, 1, 1, 0)$.
    Removing zeros yields $(1, 1, 1)$, so $L=3$.
    The integral becomes:
    \[
    \int_0^1 t^{\lfloor 3/2 \rfloor} (t-1)^{\lfloor 2/2 \rfloor} G_1(t)^3 \, dt
    = \int_0^1 t(t-1) \, dt
    = -\frac{1}{6}.
    \]
    Since $u_m = u_2 = 1$, the sign is determined by the total weight $N = 3$, giving $\sigma_{(2,1), 0} = (-1)^3 = -1$.
    Therefore,
    \[
    \mu_{(2,1), 0} = \frac{1}{6}.
    \]
    This matches the coefficient of $e_1 e_0 e_1$ in the expansion of $e'_\infty=\mathbf{log}(\mathbf{exp}(-e_1)\mathbf{exp}(-e_0))$.
\end{enumerate}
\end{ex}

\begin{prop}[Explicit formula for $\mu_{{(u)}, 0}$ in terms of Bernoulli numbers]\label{prop:integral_rep_corrected2}
For any $u \in \mathbb{N}$,
\[
\mu_{(u), 0} = \frac{(-1)^u B_{u-1}}{(u-1)!}
\]
holds.
Here, $B_{u-1}$ denotes the Bernoulli number with $B_1 = -1/2$.
\end{prop}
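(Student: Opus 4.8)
The plan is to read off $\mu_{(u),0}$---the coefficient of $W_{(u)}e_0^0=e_0^{u-1}e_1$ in $e'_\infty=\mathbf{log}(\mathbf{exp}(-e_1)\mathbf{exp}(-e_0))$---directly from the classical description of the part of a Baker--Campbell--Hausdorff series that is linear in one of its two arguments, bypassing Proposition~\ref{prop:integral_rep_corrected} entirely. I would put $X:=-e_1$ and $Y:=-e_0$, so that $e'_\infty=\mathbf{log}(\mathbf{exp}(X)\mathbf{exp}(Y))$, and note that since $e_0^{u-1}e_1$ is linear in $e_1$, its coefficient in $e'_\infty$ coincides with its coefficient in the degree-one-in-$e_1$ part of $e'_\infty$.

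The key input is the identity
\[
\bigl(\text{degree-one-in-}X\text{ part of }\mathbf{log}(\mathbf{exp}(X)\mathbf{exp}(Y))\bigr)=\frac{\mathrm{ad}_Y}{\mathrm e^{\mathrm{ad}_Y}-1}(X)=\sum_{n\ge 0}\frac{B_n}{n!}\,\mathrm{ad}_Y^{n}(X),
\]
where the $B_n$ are normalised by $\tfrac{t}{\mathrm e^t-1}=\sum_{n\ge0}\tfrac{B_n}{n!}t^n$, so that $B_1=-\tfrac12$ as in the statement. I would prove this by setting $F(s):=\mathbf{log}(\mathbf{exp}(sX)\mathbf{exp}(Y))$: then $F(0)=Y$, and since replacing $X$ by $sX$ multiplies each monomial of $\mathbf{log}(\mathbf{exp}(X)\mathbf{exp}(Y))$ by $s$ raised to its $X$-degree, the coefficient of $s$ in $F(s)$ is exactly the degree-one-in-$X$ part of $F(1)=e'_\infty$. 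Differentiating $\mathbf{exp}(F(s))=\mathbf{exp}(sX)\mathbf{exp}(Y)$ and using the standard formula $\bigl(\tfrac{d}{ds}\mathbf{exp}(F)\bigr)\mathbf{exp}(-F)=\tfrac{\mathrm e^{\mathrm{ad}_F}-1}{\mathrm{ad}_F}(\dot F)$ gives $\dot F=\tfrac{\mathrm{ad}_F}{\mathrm e^{\mathrm{ad}_F}-1}(X)$, and evaluating at $s=0$ (where $F=Y$) yields the displayed identity; the low-degree terms $X+\tfrac12[X,Y]+\tfrac1{12}[Y,[Y,X]]+\cdots$ serve to fix the signs. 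Substituting $X=-e_1$, $Y=-e_0$ and using $\mathrm{ad}_{-e_0}=-\mathrm{ad}_{e_0}$ converts the right-hand side into $\sum_{n\ge0}\tfrac{(-1)^{n+1}B_n}{n!}\,\mathrm{ad}_{e_0}^{n}(e_1)$.

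It remains to extract the monomial $e_0^{u-1}e_1$. From the elementary expansion $\mathrm{ad}_{e_0}^{n}(e_1)=\sum_{k=0}^{n}(-1)^k\binom nk e_0^{n-k}e_1e_0^{k}$, proved by induction on $n$, the series $\mathrm{ad}_{e_0}^{n}(e_1)$ is homogeneous of total degree $n+1$, and its unique monomial ending in $e_1$ is the $k=0$ term $e_0^{n}e_1$, with coefficient $1$. Hence in $\sum_{n\ge0}\tfrac{(-1)^{n+1}B_n}{n!}\mathrm{ad}_{e_0}^{n}(e_1)$ the monomial $e_0^{u-1}e_1$ can arise only from $n=u-1$, with coefficient $\tfrac{(-1)^{u}B_{u-1}}{(u-1)!}$; this argument is uniform in $u\ge1$, and the coefficient so obtained is the claimed value of $\mu_{(u),0}$.

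The computation has no serious obstacle; the one delicate point is sign bookkeeping---the Bernoulli normalisation $B_1=-\tfrac12$, the sign in the derivative-of-exponential formula, and the two sign flips coming from $X=-e_1$, $Y=-e_0$---which I would check against the explicit low-degree BCH terms and against the already-computed values $\mu_{(1),0}=-1$ and $\mu_{(3),0}=-\tfrac1{12}$. A more self-contained alternative is to instead invoke Proposition~\ref{prop:integral_rep_corrected} for $\mathbf u=(u)$, $r=0$---where the contracted and trimmed exponent sequence is $(u-1,1)$ for $u\ge2$ and $(1)$ for $u=1$, and $\sigma_{(u),0}=-1$---thereby reducing the statement to the identity $\int_0^1 t\,G_v(t)\,dt=\tfrac{(-1)^vB_v}{v!}$ for $v\ge1$; that identity follows from the Goldberg recurrence (integrating $vG_v=\bigl(t(t-1)G_{v-1}\bigr)'$ gives $\int_0^1 G_v=0$ for $v\ge2$, the symmetry $G_v(1-t)=(-1)^{v-1}G_v(t)$ then settles odd $v$, and a short generating-function computation handles even $v$), but it is noticeably more laborious than the Lie-theoretic route, which I would therefore present.
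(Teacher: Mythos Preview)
Your proof is correct and takes a genuinely different route from the paper's. The paper makes the same substitution $X=-e_1$, $Y=-e_0$, but then simply cites Goldberg's closed formula \cite[Theorem~3]{G56} for the coefficient of $X^{v_1}Y^{v_2}$ in $\mathbf{log}(\mathbf{exp}(X)\mathbf{exp}(Y))$ in terms of Bernoulli numbers, together with the $c_y=(-1)^{N-1}c_x$ symmetry from \cite[Theorem~1]{G56}, and reads off the answer. You instead derive the linear-in-$X$ part of BCH directly via $\dot F=\tfrac{\mathrm{ad}_F}{e^{\mathrm{ad}_F}-1}(X)$ evaluated at $s=0$, then extract the $e_0^{u-1}e_1$ coefficient from $\mathrm{ad}_{e_0}^{n}(e_1)$. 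Your argument is entirely self-contained and does not depend on any external results of Goldberg, which is an advantage in a paper otherwise invoking Goldberg only for the more elaborate Proposition~\ref{prop:integral_rep_corrected}; the paper's approach is shorter on the page but outsources the real content to the citation. Your alternative sketch via Proposition~\ref{prop:integral_rep_corrected} is also sound, and indeed more laborious as you note; neither you nor the paper pursues it.
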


\begin{proof}
By definition, $\mu_{(u), 0}$ is the coefficient of $W_{u} = e_0^{u-1}e_1$ in $e'_\infty$. 

Setting $X = -e_1$ and $Y = -e_0$,
we have
\[
W_{u} = (-1)^u Y^{u-1}X.
\]
According to Goldberg \cite[THEOREM 3]{G56}, the coefficient $c_x(v_1, v_2)$ of the term $X^{v_1}Y^{v_2}$ in $e'_\infty=\mathbf{log}(\mathbf{exp}(X)\mathbf{exp}(Y))$ is
\[
c_x(v_1, v_2) = \frac{(-1)^{v_1}}{v_1! v_2!} \sum_{k=1}^{v_2} \binom{v_2}{k} B_{v_1+v_2-k}.
\]
Setting $v_1=u-1, v_2=1$ in this formula yields
\[
c_x(u-1, 1) = \frac{(-1)^{u-1}}{(u-1)! 1!} \sum_{k=1}^{1} \binom{1}{k} B_{(u-1)+1-k} = \frac{(-1)^{u-1}}{(u-1)!} B_{u-1}.
\]
The relationship between the coefficient $c_x$ of $X^{v_1}Y^{v_2}$ and the coefficient $c_y$ of $Y^{v_1}X^{v_2}$
is given in \cite[THEOREM 1]{G56} by
$c_y(v_1, v_2) = (-1)^{v_1+v_2-1} c_x(v_1, v_2)$.
Therefore, the coefficient of the term $Y^{u-1}X$ in $e'_\infty$ is
\[
c_y(u-1,1) = (-1)^{u-1} c_x(u-1, 1) = (-1)^{u-1} \frac{(-1)^{u-1}}{(u-1)!} B_{u-1} = \frac{B_{u-1}}{(u-1)!}.
\]
Thus, since $\mu_{(u), 0}$ is the coefficient of $e_0^{u-1}e_1$ in $e'_\infty= \mathbf{log}(\mathbf{exp}(X)\mathbf{exp}(Y))$, we have
\[
\mu_{(u), 0} = (-1)^u c_y(u-1,1) = \frac{(-1)^u B_{u-1}}{(u-1)!}.
\]
The factor $(-1)^u$ comes from the change of variables $X=-e_1, Y=-e_0$.
\end{proof}

\begin{thm}[Coefficient of $W_{\mathbf{k}}$ in $f(e_0, e'_\infty)$]\label{thm33}
Let $f(e_0, e_1) \in K\langle\langle e_0, e_1 \rangle\rangle$ and $\mathbf{k}=(k_1, \dots, k_d) \in \mathbb{N}^d$.
The coefficient $\operatorname{Coeff}_{\mathbf{k}}(f(e_0, e'_\infty))$ of the term 
\[
W_{\mathbf{k}}=e_0^{k_d-1}e_1 e_0^{k_{d-1}-1}e_1 \cdots e_0^{k_1-1}e_1
\]
in the expansion of $f(e_0, e'_\infty) \in K\langle\langle e_0, e_1 \rangle\rangle$ is given by the following formula:
\begin{align*}
&\operatorname{Coeff}_{\mathbf{k}}(f(e_0, e'_\infty))+\sum_{\mathbf{J} \preceq \mathbf{k}} (-1)^{1+\depth(\mathbf{J})} \operatorname{Coeff}_{\mathbf{J}}(f(e_0, e_1))\\ 
&=\sum_{M=1}^{\depth(\mathbf{k})} (-1)^M \left(\sum_{\substack{\left(\mathbf{S}, \{(\mathbf{u}_{i}, r_i)\}_{i=1}^{M}\right) \in \mathrm{Part}(\mathbf{k}, M), \\ \exists i, (\mathbf{u}_{i}, r_i) \neq ((1), 0)}} \left(\sum_{\mathbf{J} \preceq \mathbf{S}} (-1)^{\depth(\mathbf{J})} \operatorname{Coeff}_{\mathbf{J}}(f(e_0, e_1))\right) \prod_{i=1}^M \mu_{\mathbf{u}_{i}, r_i}\right).
\end{align*}
Here, $\mathrm{Part}(\mathbf{k}, M)$ denotes the set of all generalized partitions of $\mathbf{k}$:
\[
\mathrm{Part}(\mathbf{k}, M) := \left\{ 
\left(\mathbf{S}, \{(\mathbf{u}_{i}, r_i)\}_{i=1}^{M}\right) 
\ \middle| \ 
\begin{aligned}
&\quad \mathbf{S}=(S_1,\dots,S_M) \in \mathbb{N}^M, \\
&\quad (\mathbf{u}_{1}, r_1),\ldots,(\mathbf{u}_{M}, r_M) \in \bigcup_{d=1}^{\infty} \mathbb{N}^d \times \mathbb{Z}_{\ge 0}, \\
& \left( e_0^{S_M-1} W_{\mathbf{u}_{M}} e_0^{r_M} \right) \cdots \left( e_0^{S_1-1} W_{\mathbf{u}_{1}} e_0^{r_1} \right)= W_{\mathbf{k}}~\text{in}~\{e_0,e_1\}^{\ast}
\end{aligned}
\right\}.
\]
\end{thm}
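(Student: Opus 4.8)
The plan is to reduce the statement to Theorem~\ref{thm22} by a single ``unipotent'' change of variables. Write $f(e_0,e_\infty)=\varphi(f)$ and $f(e_0,e'_\infty)=\psi(f)$, where $\varphi$ and $\psi$ are the continuous ring endomorphisms of $K\langle\langle e_0,e_1\rangle\rangle$ fixing $e_0$ and sending $e_1$ to $e_\infty$, respectively to $e'_\infty$. First I would introduce the continuous ring endomorphism $\alpha$ with $\alpha(e_0)=e_0$ and
\[
\alpha(e_1):=-e'_\infty-e_0=-\Bigl(-e_0+\textstyle\sum_{\mathbf u,r}\mu_{\mathbf u,r}W_{\mathbf u}e_0^{r}\Bigr)-e_0=-\sum_{\mathbf u,r}\mu_{\mathbf u,r}W_{\mathbf u}e_0^{r}.
\]
Since $\alpha(e_1)=e_1+(\text{terms of degree}\ge2)$, $\alpha$ is well defined (even an automorphism), and checking on $e_0,e_1$ gives $\psi=\alpha\circ\varphi$, hence $f(e_0,e'_\infty)=\alpha\bigl(f(e_0,e_\infty)\bigr)$. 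The gain is that $\alpha(e_1)$ is already displayed in ``partition form'': for $\mathbf m=(m_1,\dots,m_p)\in\mathbb N^{p}$,
\[
\alpha(W_{\mathbf m})=\prod_{i=p}^{1}\bigl(e_0^{m_i-1}\alpha(e_1)\bigr)=(-1)^{p}\sum_{(\mathbf u_1,r_1),\dots,(\mathbf u_p,r_p)}\Bigl(\prod_{i=1}^{p}\mu_{\mathbf u_i,r_i}\Bigr)\bigl(e_0^{m_p-1}W_{\mathbf u_p}e_0^{r_p}\bigr)\cdots\bigl(e_0^{m_1-1}W_{\mathbf u_1}e_0^{r_1}\bigr),
\]
the product taken from $i=p$ on the left to $i=1$ on the right.

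Each summand above is a single monomial, equal to $W_{\mathbf k}$ exactly when $\bigl(\mathbf m,\{(\mathbf u_i,r_i)\}_{i=1}^{p}\bigr)\in\mathrm{Part}(\mathbf k,p)$; therefore
\[
\operatorname{Coeff}_{\mathbf k}\bigl(\alpha(W_{\mathbf m})\bigr)=(-1)^{\depth(\mathbf m)}\sum_{\substack{\{(\mathbf u_i,r_i)\}_i\,:\\ (\mathbf m,\{(\mathbf u_i,r_i)\})\in\mathrm{Part}(\mathbf k,\depth(\mathbf m))}}\ \prod_{i=1}^{\depth(\mathbf m)}\mu_{\mathbf u_i,r_i},
\]
which vanishes unless $\depth(\mathbf m)\le\depth(\mathbf k)$ and $\mathrm{wt}(\mathbf m)\le\mathrm{wt}(\mathbf k)$ (so only finitely many $\mathbf m$ matter). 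Next I would plug in Theorem~\ref{thm22}, which says $\operatorname{Coeff}_{\mathbf m}(f(e_0,e_\infty))=\sum_{\mathbf J\preceq\mathbf m}(-1)^{\depth(\mathbf J)}\operatorname{Coeff}_{\mathbf J}(f(e_0,e_1))$, and apply $\alpha$ to $f(e_0,e_\infty)=\sum_{\mathbf m}\operatorname{Coeff}_{\mathbf m}(f(e_0,e_\infty))W_{\mathbf m}+(\text{terms ending in }e_0)$; since $\alpha(e_0)=e_0$, the image of the last part still ends in $e_0$ and is invisible to $\operatorname{Coeff}_{\mathbf k}$. Combining and reindexing by $M:=\depth(\mathbf m)$ (renaming $\mathbf m=\mathbf S$) then yields
\[
\operatorname{Coeff}_{\mathbf k}\bigl(f(e_0,e'_\infty)\bigr)=\sum_{M=1}^{\depth(\mathbf k)}(-1)^{M}\sum_{\left(\mathbf S,\{(\mathbf u_i,r_i)\}_{i=1}^{M}\right)\in\mathrm{Part}(\mathbf k,M)}\Bigl(\sum_{\mathbf J\preceq\mathbf S}(-1)^{\depth(\mathbf J)}\operatorname{Coeff}_{\mathbf J}(f(e_0,e_1))\Bigr)\prod_{i=1}^{M}\mu_{\mathbf u_i,r_i}.
\]

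To conclude, I would isolate the unique ``all-trivial'' term, with $(\mathbf u_i,r_i)=((1),0)$ for every $i$: then each block is $e_0^{S_i-1}e_1$, so the product of blocks is $W_{\mathbf S}$, forcing $\mathbf S=\mathbf k$, $M=\depth(\mathbf k)$, and $\prod_i\mu_{(1),0}=(-1)^{\depth(\mathbf k)}$; its contribution to the right-hand side is $(-1)^{\depth(\mathbf k)}(-1)^{\depth(\mathbf k)}\sum_{\mathbf J\preceq\mathbf k}(-1)^{\depth(\mathbf J)}\operatorname{Coeff}_{\mathbf J}(f(e_0,e_1))=-\sum_{\mathbf J\preceq\mathbf k}(-1)^{1+\depth(\mathbf J)}\operatorname{Coeff}_{\mathbf J}(f(e_0,e_1))$, and transposing it to the left-hand side turns the identity into the asserted one, the remaining sum running over $\bigl(\mathbf S,\{(\mathbf u_i,r_i)\}\bigr)$ with some $(\mathbf u_i,r_i)\neq((1),0)$. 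I expect the main obstacle to be the bookkeeping in the middle step: checking, without over- or under-counting, that the $W_{\mathbf k}$-coefficient of $\alpha(W_{\mathbf m})$ is governed exactly by $\mathrm{Part}(\mathbf k,\depth(\mathbf m))$ with the sign $(-1)^{\depth(\mathbf m)}$, and that after feeding in Theorem~\ref{thm22} the double sum over $\mathbf m$ and over partitions collapses into the stated shape. (Alternatively, one could avoid $\alpha$ and expand $\psi(W_{\mathbf J})=\prod_i(e_0^{j_i-1}e'_\infty)$ directly, classifying each factor by whether $-e_0^{j_i}$ or some $\mu_{\mathbf u,r}e_0^{j_i-1}W_{\mathbf u}e_0^{r}$ is selected; the selected factors of the second kind cut $\mathbf J$ into consecutive blocks realizing a refinement $\mathbf J\preceq\mathbf S$, while the $-1$'s from the first kind supply the sign. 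Routing through Theorem~\ref{thm22} via $\alpha$ keeps the combinatorics shorter.)
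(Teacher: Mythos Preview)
Your proof is correct and follows essentially the same route as the paper: the paper also passes from $f(e_0,e_\infty)$ to $f(e_0,e'_\infty)$ via the substitution $e_1\mapsto -e_0-e'_\infty$ (your $\alpha$), expands each $W_{\mathbf S}$ under this substitution using $e_0+e'_\infty=\sum\mu_{\mathbf u,r}W_{\mathbf u}e_0^{r}$, reads off the $W_{\mathbf k}$-coefficient via $\mathrm{Part}(\mathbf k,M)$, feeds in Theorem~\ref{thm22}, and then isolates the all-trivial term $(\mathbf u_i,r_i)=((1),0)$. The only difference is cosmetic: you name $\alpha$ explicitly and verify $\psi=\alpha\circ\varphi$, whereas the paper simply announces the substitution.
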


\begin{proof}
Theorem \ref{thm22} gives the expansion
\[
f(e_0, e_\infty) = \sum_{\mathbf{S}} \operatorname{Coeff}_{\mathbf{S}}(f(e_0, e_\infty)) W_{\mathbf{S}}(e_0, e_1)+ (\text{terms ending in } e_0).
\]
Applying the substitution $e_1 \mapsto -e_0-e'_\infty$ transforms
$f(e_0, e_\infty)$ into $f(e_0, e'_\infty)$, and
$W_{\mathbf{S}}(e_0, e_1)$ into
\[
W_{\mathbf{S}}(e_0, -e_0-e'_\infty) = \prod_{i=M}^1 (e_0^{S_i-1}(-e_0-e'_\infty)) = (-1)^M \prod_{i=M}^1 (e_0^{S_i-1}(e_0+e'_\infty))
\]
where $M := \depth(\mathbf{S})$.
Using the expansion $e_0+e'_\infty = \sum \mu_{\mathbf{u}, r} W_{\mathbf{u}} e_0^r$,
we obtain
\begin{align*}
f(e_0, e'_\infty) = &\sum_{\mathbf{S}} (-1)^M \operatorname{Coeff}_{\mathbf{S}}(f(e_0, e_\infty)) \prod_{i=M}^1 \left( \sum_{(\mathbf{u}, r) \in \mathcal{D}} \mu_{\mathbf{u}, r} e_0^{S_i-1} W_{\mathbf{u}} e_0^r \right)\\
&\quad +(\text{terms ending in } e_0)
\end{align*}
where $\mathcal{D} = \bigcup_{d=1}^\infty \mathbb{N}^{d} \times \mathbb{Z}_{\ge 0}$ is the set of all indices.
We expand the product $\prod_{i=M}^1$ as follows.
We select one term from the sum $\sum_{(\mathbf{u}, r) \in \mathcal{D}} \mu_{\mathbf{u}, r} e_0^{S_i-1} W_{\mathbf{u}} e_0^r$ for each $i=M,\cdots,1$, and represent this selection by the sequence
\[
\{(\mathbf{u}_{i}, r_i)\}_{i=1}^{M} \in \mathcal{D}^M.
\]
Then, the above product expands as a sum over $\mathcal{D}^M$:
\begin{align*}
&\sum_{\mathbf{S}} (-1)^M \operatorname{Coeff}_{\mathbf{S}}(f(e_0, e_\infty)) \prod_{i=M}^1 \left( \sum_{(\mathbf{u}, r) \in \mathcal{D}} \mu_{\mathbf{u}, r} e_0^{S_i-1} W_{\mathbf{u}} e_0^r \right)\\
&= \sum_{\mathbf{S}} (-1)^M \operatorname{Coeff}_{\mathbf{S}}(f(e_0, e_\infty)) \sum_{\{(\mathbf{u}_{i}, r_i)\}_{i=1}^{M} \in \mathcal{D}^M} \left( \prod_{i=1}^M \mu_{\mathbf{u}_{i}, r_i} \right) \cdot \Psi(\mathbf{S}, \{(\mathbf{u}_{i}, r_i)\}_{i=1}^{M})
\end{align*}
where
\[
\Psi(\mathbf{S}, \{(\mathbf{u}_{i}, r_i)\}_{i=1}^{M}) := \prod_{i=M}^1 \left( e_0^{S_i-1} W_{\mathbf{u}_{i}} e_0^{r_i} \right).
\]
The coefficient $\operatorname{Coeff}_{\mathbf{k}}(f(e_0, e'_\infty))$ is the sum of coefficients for all pairs $(\mathbf{S}, \{(\mathbf{u}_{i}, r_i)\}_{i=1}^{M})$ satisfying $\Psi(\mathbf{S}, \{(\mathbf{u}_{i}, r_i)\}_{i=1}^{M}) = W_{\mathbf{k}}$.
Since
\[
(\mathbf{S}, \{(\mathbf{u}_{i}, r_i)\}_{i=1}^{M}) \in \mathrm{Part}(\mathbf{k}, M) \iff \Psi(\mathbf{S}, \{(\mathbf{u}_{i}, r_i)\}_{i=1}^{M}) = W_{\mathbf{k}},
\]
rearranging the order of summation in the above equation yields:
\begin{align*}
&\operatorname{Coeff}_{\mathbf{k}}(f(e_0, e'_\infty)) \\
&= \sum_{M=1}^{\depth(\mathbf{k})} \sum_{\substack{\mathbf{S}, \{(\mathbf{u}_{i}, r_i)\}_{i=1}^{M} \\ \Psi(\mathbf{S}, \{(\mathbf{u}_{i}, r_i)\}) = W_{\mathbf{k}}}} (-1)^M \operatorname{Coeff}_{\mathbf{S}}(f(e_0, e_\infty)) \prod_{i=1}^M \mu_{\mathbf{u}_{i}, r_i}\\
&=\sum_{M=1}^{\depth(\mathbf{k})} \sum_{(\mathbf{S}, \{(\mathbf{u}_{i}, r_i)\}_{i=1}^{M}) \in \mathrm{Part}(\mathbf{k}, M)} (-1)^M \operatorname{Coeff}_{\mathbf{S}}(f(e_0, e_\infty)) \prod_{i=1}^M \mu_{\mathbf{u}_{i}, r_i}\\
&=\sum_{M=1}^{\depth(\mathbf{k})} \sum_{(\mathbf{S}, \{(\mathbf{u}_{i}, r_i)\}_{i=1}^{M}) \in \mathrm{Part}(\mathbf{k}, M)} (-1)^M \left(\sum_{\mathbf{J} \preceq \mathbf{S}} (-1)^{\depth(\mathbf{J})} \operatorname{Coeff}_{\mathbf{J}}(f(e_0, e_1))\right) \prod_{i=1}^M \mu_{\mathbf{u}_{i}, r_i}\\
&\quad (\text{by Theorem }\ref{thm22})\\
&=\sum_{M=1}^{\depth(\mathbf{k})} (-1)^M \left(\sum_{\substack{\left(\mathbf{S}, \{(\mathbf{u}_{i}, r_i)\}_{i=1}^{M}\right) \in \mathrm{Part}(\mathbf{k}, M), \\ \exists i, (\mathbf{u}_{i}, r_i) \neq ((1), 0)}} \left(\sum_{\mathbf{J} \preceq \mathbf{S}} (-1)^{\depth(\mathbf{J})} \operatorname{Coeff}_{\mathbf{J}}(f(e_0, e_1))\right) \prod_{i=1}^M \mu_{\mathbf{u}_{i}, r_i}\right)\\
&\quad +\sum_{\mathbf{J} \preceq \mathbf{k}} (-1)^{\depth(\mathbf{J})} \operatorname{Coeff}_{\mathbf{J}}(f(e_0, e_1)).
\end{align*}
The last equality follows by isolating the term corresponding to $(\mathbf{u}_i, r_i) = ((1), 0)$ for all $i=1,\cdots,M$ which implies $W_{\mathbf{S}}=W_{\mathbf{k}}$, $M = \depth(\mathbf{k})$ and $\mu_{(1),0}=-1$.
This completes the proof.
\end{proof}

\begin{cor}[Coefficient of $W_{n}$ in $f(e_0, e'_\infty)$]\label{cor:single_index}
Let $n$ be a positive integer. For any $f(e_0, e_1) \in K\langle\langle e_0, e_1 \rangle\rangle$, the coefficient $\operatorname{Coeff}_{n}(f(e_0, e'_\infty))$ of $W_{n} = e_0^{n-1}e_1$ in the expansion of $f(e_0, e'_\infty)$ is expressed as follows using the coefficients of $f(e_0, e_1)$:
\begin{align*}
\operatorname{Coeff}_{n}(f(e_0, e'_\infty)) 
=\sum_{u=1}^{n} \frac{(-1)^{u+1} B_{u-1}}{(u-1)!} \left( \sum_{\substack{\mathbf{J} \in \mathbb{N}^{m},~m \in \mathbb{N}, \\ {\rm wt}(\mathbf{J})=n-u+1}} (-1)^{\depth(\mathbf{J})} \operatorname{Coeff}_{\mathbf{J}}(f(e_0, e_1)) \right).
\end{align*}
Here, $B_k$ is the Bernoulli number defined with $B_1 = -1/2$.
\end{cor}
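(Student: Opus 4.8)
The plan is to deduce this corollary directly from Theorem~\ref{thm33} by specializing to the single-index multi-index $\mathbf{k} = (n)$, for which $\depth(\mathbf{k}) = 1$, and then rewriting the resulting right-hand side via the Bernoulli-number formula of Proposition~\ref{prop:integral_rep_corrected2}.

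First I would unravel the set $\mathrm{Part}((n), M)$. Since $\depth((n)) = 1$, only $M = 1$ contributes, so a generalized partition is a pair $((S_1), (\mathbf{u}_1, r_1))$ with $e_0^{S_1 - 1} W_{\mathbf{u}_1} e_0^{r_1} = e_0^{n-1} e_1$ in $\{e_0, e_1\}^{\ast}$. The key observation is that $W_{(n)} = e_0^{n-1}e_1$ contains exactly one occurrence of the letter $e_1$, while $W_{\mathbf{u}_1}$ contains $\depth(\mathbf{u}_1)$ of them and ends in $e_1$; hence $\depth(\mathbf{u}_1) = 1$, say $\mathbf{u}_1 = (u)$, and $r_1 = 0$. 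Matching the powers of $e_0$ then forces $S_1 = n - u + 1$, a positive integer precisely when $1 \le u \le n$. Thus $\mathrm{Part}((n), 1) = \{\,((n-u+1), ((u), 0)) : 1 \le u \le n\,\}$, and the side condition ``$\exists i,\ (\mathbf{u}_i, r_i) \neq ((1),0)$'' simply excludes $u = 1$.

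Next I would note that, directly from Definition~\ref{saibun}, $\mathbf{J} \preceq (S)$ is equivalent to $\mathrm{wt}(\mathbf{J}) = S$ (with arbitrary depth). Substituting all of this into Theorem~\ref{thm33} with $\mathbf{k} = (n)$ yields
\[
\operatorname{Coeff}_n(f(e_0, e'_\infty)) + \sum_{\mathrm{wt}(\mathbf{J}) = n} (-1)^{1+\depth(\mathbf{J})} \operatorname{Coeff}_{\mathbf{J}}(f(e_0,e_1)) = -\sum_{u=2}^{n} \mu_{(u),0} \left(\sum_{\mathrm{wt}(\mathbf{J}) = n-u+1} (-1)^{\depth(\mathbf{J})} \operatorname{Coeff}_{\mathbf{J}}(f(e_0,e_1))\right).
\]
Transferring the second left-hand sum to the right and using $-(-1)^{1+\depth(\mathbf{J})} = (-1)^{\depth(\mathbf{J})}$, it becomes exactly the $u = 1$ term of the asserted formula, since $\tfrac{(-1)^{1+1}B_0}{0!} = B_0 = 1$ and $n-1+1 = n$. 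For $u \ge 2$, Proposition~\ref{prop:integral_rep_corrected2} gives $\mu_{(u),0} = \tfrac{(-1)^u B_{u-1}}{(u-1)!}$, so $-\mu_{(u),0} = \tfrac{(-1)^{u+1} B_{u-1}}{(u-1)!}$, which is precisely the claimed coefficient. Collecting the $u = 1$ and $u \ge 2$ contributions gives the formula.

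The only delicate point is the combinatorial bookkeeping of $\mathrm{Part}((n),1)$ — the single-$e_1$ argument pinning down $\depth(\mathbf{u}_1) = 1$ and $r_1 = 0$, together with the careful treatment of the boundary term $u = 1$; everything else is a direct substitution of the already-proven Theorem~\ref{thm33} and Proposition~\ref{prop:integral_rep_corrected2}, so I do not expect a genuine obstacle.
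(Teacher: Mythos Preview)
Your proposal is correct and follows essentially the same route as the paper: specialize Theorem~\ref{thm33} to $\mathbf{k}=(n)$, analyze $\mathrm{Part}((n),1)$ to obtain $\mathbf{u}_1=(u)$, $r_1=0$, $S_1=n-u+1$, and then invoke Proposition~\ref{prop:integral_rep_corrected2}. Your write-up is in fact more explicit than the paper's about why the transferred left-hand term matches the $u=1$ contribution via $B_0=1$, which is a helpful clarification rather than a departure.
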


\begin{proof}
Applying Theorem \ref{thm33} with $\mathbf{k}=(n)$, we have $\operatorname{depth}(\mathbf{k})=1$, so $M=1$. The condition for $\mathrm{Part}((n), 1)$ implies $e_0^{S_1-1} W_{\mathbf{u}_{1}} e_0^{r_1} = e_0^{n-1}e_1$. This forces $\operatorname{depth}(\mathbf{u}_1)=1$, so $\mathbf{u}_1=(u)$.
Comparing the degrees and the order of terms yields $r_1=0$ and $S_1+u-1=n$.
Substituting $\mu_{(u),0} = \frac{(-1)^u B_{u-1}}{(u-1)!}$ (Proposition \ref{prop:integral_rep_corrected2}) into the formula completes the proof.
\end{proof}

\section{Review of multiple polylogarithms}

In this section,
we review complex multiple polylogarithms and $\ell$-adic  Galois multiple polylogarithms.

\subsection{Complex multiple polylogarithms}\label{cmpoly}
In this subsection,
we recall the definition of the complex multiple polylogarithm \cite{D90}, \cite{F04} and \cite{F14}.

For a (possibly, tangential base) point $z$ on $\mathbb{P}^1 \backslash \{0,1,\infty\}$,
we denote by
\[
\pi_1^{\rm top}\left(\mathbb{P}^1(\mathbb{C}) \backslash \{0,1,\infty\}; \overrightarrow{01}, z\right)
\]
the set of  homotopy classes of piecewise smooth topological paths on
$\mathbb{P}^1(\mathbb{C}) \backslash \{0,1,\infty\}$ from the tangential base point $\overrightarrow{01}$ to $z$,
and
\[
\pi_1^{\rm top}\left(\mathbb{P}^1(\mathbb{C}) \backslash \{0,1,\infty\}, \overrightarrow{01}\right):=\pi_1^{\rm top}\left(\mathbb{P}^1(\mathbb{C}) \backslash \{0,1,\infty\}; \overrightarrow{01}, \overrightarrow{01}\right)
\]
for the topological fundamental group of $\mathbb{P}^1(\mathbb{C}) \backslash \{0,1,\infty\}$ at the base point $\overrightarrow{01}$
with respect to the path composition
$
\gamma_1 \cdot \gamma_2:=\gamma_1 \gamma_2,
$
i.e., paths are composed from left to right.
Let 
\[
l_0,
l_1,
l_\infty \in \pi_1^{\rm top}\left(\mathbb{P}^1(\mathbb{C}) \backslash \{0,1,\infty\}, \overrightarrow{01}\right)
\]
be homotopy classes of smooth loops
circling counterclockwise around
$0$, $1$, and $\infty$ respectively,
as shown in FIGURE \ref{paths}.
In FIGURE \ref{paths}, the dashed line represents the real axis ${\mathbb P}^1({\mathbb R})\backslash \{0,1,\infty\}$ and the upper half-plane lies above it.
Then,
$\pi_1^{\rm top}\left(\mathbb{P}^1(\mathbb{C}) \backslash \{0,1,\infty\}, \overrightarrow{01}\right)$
is the free group of rank $2$ with the generating system $\{l_0, l_1\}$. 
\[
\pi_1^{\rm top}\left(\mathbb{P}^1(\mathbb{C}) \backslash \{0,1,\infty\}, \overrightarrow{01}\right)=\left<l_0,
l_1,
l_{\infty} \mid l_0~l_1~l_{\infty}=1 \right>=\left<l_0,
l_1\right>.
\]

\begin{center}
\begin{figure}[hbtp]
\caption{Topological loops on ${\mathbb P}^1({\mathbb C})\backslash 
\{0,1,\infty\}$}
\label{paths}
\begin{tikzpicture} \label{picture}

\draw (0.1,0) -- (0,0.1);
\draw (0.1,0) -- (0.2,0.1);

\draw (7.3,0) -- (7.4,-0.1);
\draw (7.3,0) -- (7.2,-0.1);

\draw (11.3,0) -- (11.4,-0.1);
\draw (11.3,0) -- (11.2,-0.1);


\draw (2.1,0) to [out=0,in=270] (2.8,1);
\draw (2.8,1) to [out=90,in=-10] (2,1.9);
\draw (2,1.9) to [out=170,in=90] (0.1,0);
\draw (0.1,0) to [out=270,in=190] (2,-1.9);
\draw (2,-1.9) to [out=10,in=270] (2.8,-0.7);

\draw (2.8,-0.7) to [out=90,in=360] (2.1,0);
\draw (2,0) to [out=0,in=210] (4,0.4);
\draw (4,0.4) to [out=30,in=180] (6,1.4);
\draw (6,1.4) to [out=0,in=90] (7.3,0);
\draw (7.3,0) to [out=270,in=0] (6,-1.4);
\draw (4,-0.4) to [out=330,in=180] (6,-1.4);
\draw (4,-0.4) to [out=150,in=0] (2,0);

\draw (2.1,0) to [out=0,in=225] (4.5,1.5);
\draw (4.5,1.5) to [out=45,in=180] (8,3.3);
\draw (8,3.3) to [out=0,in=90] (11.3,0);
\draw (11.3,0) to [out=270,in=0] (10,-1.4);
\draw (8.6,0) to [out=-90,in=180] (10,-1.4);
\draw (8.6,0) to [out=90,in=0] (6.5,2);
\draw (2.2,0) to [out=0,in=180] (6.5,2);

\node at (-0.2,0) {$l_0$};
\node at (1.8,-0.4) {$0$};
\node at (6.2,-0.4) {$1$};
\node at (10.2,-0.4) {$\infty$};
\node at (7.7,0) {$l_1$};
\node at (11.7,0) {$l_\infty$};
\node at (2,0) {${\bullet}$};
\node at (6,0) {${\bullet}$};
\node at (10,0) {${\bullet}$};
\node at (10,3.6) {~};

\draw[dotted](-1,0)--(12.5,0);

\fill[white](2,0)circle(0.07);
\fill[white](6,0)circle(0.07);
\fill[white](10,0)circle(0.07);

\node at (2,0) {${\circ}$};
\node at (6,0) {${\circ}$};
\node at (10,0) {${\circ}$};
\end{tikzpicture}
\end{figure}
\end{center}

\begin{dfn}[Complex multiple polylogarithms $Li_{\mathbf{k}}$]
Let $z$ be a $\mathbb{C}$-rational (possibly tangential) base point on $\mathbb{P}^1 \backslash \{0,1,\infty\}$.
For a multi-index $\mathbf{k}=(k_1, \dots, k_d) \in \mathbb{N}^d$ and a path $\gamma \in \pi_1^{\rm top}\left(\mathbb{P}^1(\mathbb{C}) \backslash \{0,1,\infty\}; \overrightarrow{01}, z\right)$, we define the complex multiple polylogarithm $Li_{\mathbf{k}}(z;\gamma)$ by
the following iterated integral along $\gamma$:
\begin{align}\label{cpolydef}
& {Li}_{\mathbf{k}}\left(z;\gamma\right):=
\begin{cases}
\displaystyle \int_{\gamma} \frac{1}{t}Li_{k_1,\dots,k_d-1}\left(t;\gamma_{}\right)dt & k_d\neq 1, \\
\displaystyle \int_{\gamma} \frac{1}{1-t}Li_{k_1,\dots,k_{d-1}}\left(t;\gamma_{}\right)dt & k_d= 1, \\
\end{cases} \\
& Li_1\left(z;\gamma\right):=-\log(1-z;\gamma)=\int_{\gamma}\frac{dt}{1-t}.
\end{align}
\end{dfn}

\begin{dfn}[Non-commutative variables $e_0^{\rm dR}, e_1^{\rm dR},$ and $e_\infty^{\rm dR}$]

Let
\begin{align}\label{Cvariable}
e_0^{\rm dR}:=\left(\dfrac{dz}{z}\right)^{\ast},~
e_1^{\rm dR}:=\left(\dfrac{dz}{z-1}\right)^{\ast},~
e_\infty^{\rm dR}:=\left(\dfrac{dw}{w}\right)^{\ast}~
\in \Omega^1_{\rm log}\left( \mathbb{P}^1(\mathbb{C}) \backslash \{0,1,\infty\} \right)^{\ast}
\end{align}
be the duals of the canonical differential forms on $\mathbb{P}^1(\mathbb{C}) \backslash \{0,1,\infty\}$,
where $w=1/z$.
Here,
\[
\Omega^1_{\rm log}\left( \mathbb{P}^1(\mathbb{C}) \backslash \{0,1,\infty\} \right)
\]
denotes the space of meromorphic 1-forms with logarithmic singularities on $( \mathbb{P}^1(\mathbb{C}), \{0,1,\infty\})$.
\end{dfn}

\begin{rem}
The natural isomorphism
\[
\Omega^1_{\rm log}\left( \mathbb{P}^1(\mathbb{C}) \backslash \{0,\infty\} \right)^{\ast} \simeq \pi_1^{\rm top}(\mathbb{P}^1(\mathbb{C}) \backslash \{0,1,\infty\}, \overrightarrow{01})^{\rm ab} \otimes \mathbb{C}
\]
gives the identifications
\[
e_0^{\rm dR}=\dfrac{\bar{l}_0}{2\pi \sqrt{-1}},\quad
e_1^{\rm dR}=\dfrac{\bar{l}_1}{2\pi \sqrt{-1}},\quad
e_\infty^{\rm dR}=\dfrac{\bar{l}_\infty}{2\pi \sqrt{-1}},
\]
where $l_0$, $l_1$, $l_\infty$ are as described in Figure \ref{paths}.
\end{rem}

\begin{rem}
By the definition (\ref{Cvariable}),
the following relation holds.
\begin{align}\label{Cvariablerel}
e_\infty^{\rm dR}=-e_1^{\rm dR}-e_0^{\rm dR}.
\end{align}
\end{rem}

\begin{dfn}[Generating series $\Lambda_{\overrightarrow{01}}^{z, \gamma}$]
Let
\[
\omega:=\frac{dz}{z}e_0^{\rm dR}+\frac{dz}{z-1}e_1^{\rm dR} \in \Omega^1_{\rm log}\left( \mathbb{P}^1(\mathbb{C}) \backslash \{0,1,\infty\} \right) \otimes \Omega^1_{\rm log}\left( \mathbb{P}^1(\mathbb{C}) \backslash \{0,1,\infty\} \right)^{\ast}
\]
be the canonical 1-form on $\mathbb{P}^1(\mathbb{C}) \backslash \{0,1,\infty\}$.
For a path $\gamma \in \pi_1^{\rm top}\left(\mathbb{P}^1(\mathbb{C}) \backslash \{0,1,\infty\}; \overrightarrow{01}, z\right)$, we define the formal power series $\Lambda_{\overrightarrow{01}}^{z, \gamma}\left(e_0^{\rm dR},e_1^{\rm dR}\right)$ by
\begin{align}\label{Lambda}
\Lambda_{\overrightarrow{01}}^{z, \gamma}\left(e_0^{\rm dR},e_1^{\rm dR}\right):=1+\sum_{i=1}^{\infty}~\int_{ \overrightarrow{01},\gamma}^{z}~{\underbrace{\omega \ldots \omega}_{i~\text{times}}}~\in \mathbb{C} \langle \langle e_0^{\rm dR}, e_1^{\rm dR} \rangle \rangle^{\times}
\end{align}
where $\mathbb{C} \langle \langle e_0^{\rm dR}, e_1^{\rm dR} \rangle \rangle$ is the algebra of non-commutative formal power series in two variables over $\mathbb{C}$.
See \cite{W97} for details on (\ref{Lambda}).
\end{dfn}

\begin{dfn}[Complex KZ solution $G_{\overrightarrow{01}}^{z, \gamma}$]
For a word $w=w_1 \cdots w_n$ with $w_i \in \{e_0^{\rm dR},e_1^{\rm dR}\}$,
we define ${w}^{\rm op}:=w_n \cdots w_1$.
For $\Lambda=\sum_{w \in \{e_0^{\rm dR},e_1^{\rm dR}\}^{\ast} \backslash \{1\}} \Coeff_{w}(\Lambda)~w  \in \mathbb{C} \langle \langle e_0^{\rm dR},e_1^{\rm dR} \rangle \rangle$,
we define the opposite of $\Lambda$ by
\[
{\Lambda}^{\rm op}:=\sum_{w \in \{e_0^{\rm dR},e_1^{\rm dR}\}^{\ast} \backslash \{1\}} \Coeff_{w}(\Lambda)~{w}^{\rm op}.
\]
We define
\begin{align}\label{KZsol}
G_{\overrightarrow{01}}^{z, \gamma}\left(e_0^{\rm dR},e_1^{\rm dR}\right):=\left(\Lambda_{\overrightarrow{01}}^{z, \gamma}\left(e_0^{\rm dR},e_1^{\rm dR}\right)\right)^{\rm op}.
\end{align}
\end{dfn}

\begin{rem}
The formal Knizhnik-Zamolodchikov (KZ) equation on $\mathbb{P}^1(\mathbb{C}) \backslash \{0,1,\infty\}$ is the differential equation
\[
\dfrac{d}{d z}G\left(e_0^{\rm dR},e_1^{\rm dR}\right)(z)=\left( \dfrac{e_0^{\rm dR}}{z}+\dfrac{e_1^{\rm dR}}{z-1} \right)G\left(e_0^{\rm dR},e_1^{\rm dR}\right)(z),
\]
where
$G\left(e_0^{\rm dR},e_1^{\rm dR}\right)(z)$ is an analytic function (i.e., each of its coefficients is analytic) with values in $\mathbb{C} \langle \langle e_0^{\rm dR}, e_1^{\rm dR} \rangle \rangle$.
The series $G_{\overrightarrow{01}}^{z, \gamma}\left(e_0^{\rm dR},e_1^{\rm dR}\right)$ defined in (\ref{KZsol}) is a fundamental solution of the formal KZ equation.
\end{rem}

\begin{prop}
For a $\mathbb{C}$-rational (possibly tangential) base point $z$ on $\mathbb{P}^1 \backslash \{0,1,\infty\}$ and a path $\gamma \in \pi_1^{\rm top}\left(\mathbb{P}^1(\mathbb{C}) \backslash \{0,1,\infty\}; \overrightarrow{01}, z\right)$,
the following holds
\begin{align}
G_{\overrightarrow{01}}^{z, \gamma}\left(e_0^{\rm dR},e_1^{\rm dR}\right)=1+\sum_{\mathbf{k}=(k_1, \dots, k_d)} (-1)^{{\rm dp}(\mathbf{k})} Li_{\mathbf{k}}(z;\gamma)~{\underbrace{e_0^{\rm dR} \ldots e_0^{\rm dR}}_{k_d-1~\text{times}}}e_1^{\rm dR} \cdots {\underbrace{e_0^{\rm dR} \ldots e_0^{\rm dR}}_{k_1-1~\text{times}}}e_1^{\rm dR}+\cdots. \notag
\end{align}
In other words, for $W_\mathbf{k}:= {\underbrace{e_0^{\rm dR} \ldots e_0^{\rm dR}}_{k_d-1~\text{times}}}e_1^{\rm dR} \cdots {\underbrace{e_0^{\rm dR} \ldots e_0^{\rm dR}}_{k_1-1~\text{times}}}e_1^{\rm dR}$, we obtain
\begin{align}\label{formcpoly}
Li_{\mathbf{k}}(z;\gamma)=(-1)^{{\rm dp}(\mathbf{k})} {\rm Coeff}_{W_\mathbf{k}}\left(G_{\overrightarrow{01}}^{z, \gamma}\left(e_0^{\rm dR},e_1^{\rm dR}\right)\right).
\end{align}
\end{prop}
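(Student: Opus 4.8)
The plan is to exploit the fact, recalled in the remark following \eqref{KZsol}, that $G_{\overrightarrow{01}}^{z,\gamma}(e_0^{\rm dR},e_1^{\rm dR})$ is the fundamental solution of the formal KZ equation normalized at the tangential base point $\overrightarrow{01}$. From this I extract a first-order differential equation satisfied by each word-coefficient of $G_{\overrightarrow{01}}^{z,\gamma}$, and then match these coefficients against the recursive definition \eqref{cpolydef} of the $Li_{\mathbf{k}}$ by induction on $\mathrm{wt}(\mathbf{k})$. This is the classical iterated-integral presentation of multiple polylogarithms in the present normalization.

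First I would expand $G_{\overrightarrow{01}}^{z,\gamma} = \sum_{w} g_w(z)\, w$ with $w$ running over words in the letters $e_0^{\rm dR}, e_1^{\rm dR}$, so that $g_{W_{\mathbf{k}}}(z) = {\rm Coeff}_{W_{\mathbf{k}}}(G_{\overrightarrow{01}}^{z,\gamma})$ and $g_{\emptyset}\equiv 1$. Comparing the coefficients of a word $w = \varepsilon_1\varepsilon_2\cdots\varepsilon_n$ on the two sides of $\tfrac{dG}{dz} = \big(\tfrac{e_0^{\rm dR}}{z} + \tfrac{e_1^{\rm dR}}{z-1}\big)G$ gives $\tfrac{d}{dz}g_{w}(z) = \tfrac1z g_{\varepsilon_2\cdots\varepsilon_n}(z)$ when $\varepsilon_1 = e_0^{\rm dR}$ and $\tfrac{d}{dz}g_{w}(z) = \tfrac{1}{z-1}g_{\varepsilon_2\cdots\varepsilon_n}(z)$ when $\varepsilon_1 = e_1^{\rm dR}$; integrating along $\gamma$ with the tangential normalization (the standard regularization, cf. \cite{W97}) yields
\[
g_{\varepsilon_1\varepsilon_2\cdots\varepsilon_n}(z) = \int_{\overrightarrow{01},\gamma}^{z}\eta_{\varepsilon_1}(t)\, g_{\varepsilon_2\cdots\varepsilon_n}(t),\qquad \eta_{e_0^{\rm dR}}:=\tfrac{dt}{t},\quad \eta_{e_1^{\rm dR}}:=\tfrac{dt}{t-1},
\]
that is, the \emph{first} letter of the word is the one integrated outermost.

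Then I would prove $g_{W_{\mathbf{k}}}(z) = (-1)^{{\rm dp}(\mathbf{k})} Li_{\mathbf{k}}(z;\gamma)$ by induction on $\mathrm{wt}(\mathbf{k})$. For $\mathbf{k} = (1)$ one has $W_{(1)} = e_1^{\rm dR}$ and $g_{e_1^{\rm dR}}(z) = \int_{\overrightarrow{01},\gamma}^{z}\tfrac{dt}{t-1} = -\int_{\gamma}\tfrac{dt}{1-t} = -Li_1(z;\gamma)$, the case $d=1$. For the inductive step, the first letter of $W_{\mathbf{k}} = (e_0^{\rm dR})^{k_d-1}e_1^{\rm dR}\cdots(e_0^{\rm dR})^{k_1-1}e_1^{\rm dR}$ is $e_0^{\rm dR}$ exactly when $k_d>1$, and removing it leaves $W_{(k_1,\dots,k_d-1)}$, still of depth $d$ and of smaller weight; the displayed integral formula together with the inductive hypothesis then reproduces the first branch of \eqref{cpolydef} with the same sign $(-1)^{d}$. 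When $k_d = 1$ the first letter is $e_1^{\rm dR}$ and removing it leaves $W_{(k_1,\dots,k_{d-1})}$, of depth $d-1$; here
\[
g_{W_{\mathbf{k}}}(z) = (-1)^{d-1}\int_{\gamma}\tfrac{dt}{t-1}\,Li_{(k_1,\dots,k_{d-1})}(t;\gamma) = -(-1)^{d-1}\int_{\gamma}\tfrac{dt}{1-t}\,Li_{(k_1,\dots,k_{d-1})}(t;\gamma) = (-1)^{d}Li_{\mathbf{k}}(z;\gamma),
\]
where the sign flip comes from $\tfrac{dt}{t-1} = -\tfrac{dt}{1-t}$ and the final equality is the second branch of \eqref{cpolydef}. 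This establishes \eqref{formcpoly}.

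The one point that requires real care is the orientational bookkeeping: one must verify that, with the KZ normalization used in the paper, it is genuinely the first letter of a word that corresponds to the outermost integration, and that the sign convention $Li_1 = -\log(1-z)$ (the form $\tfrac{dt}{1-t}$, not $\tfrac{dt}{t-1}$) is precisely what makes the factor $(-1)^{{\rm dp}(\mathbf{k})}$ transport consistently through both branches of the recursion. Convergence of the iterated integrals and the regularization at the tangential base point $\overrightarrow{01}$ present no new difficulty and may be quoted from \cite{W97}.
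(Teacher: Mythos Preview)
Your argument is correct. The induction on weight, splitting on whether the leftmost letter of $W_{\mathbf{k}}$ is $e_0^{\rm dR}$ or $e_1^{\rm dR}$, matches the two branches of the recursion \eqref{cpolydef} precisely, and the sign bookkeeping is right.

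The paper takes a slightly different route. Rather than invoking the KZ differential equation for $G_{\overrightarrow{01}}^{z,\gamma}$, it goes back to the generating series $\Lambda_{\overrightarrow{01}}^{z,\gamma}$ of \eqref{Lambda}, which is by definition a sum of iterated integrals of the $1$-form $\omega$; one reads off directly that the coefficient of $e_1^{\rm dR}(e_0^{\rm dR})^{k_1-1}\cdots e_1^{\rm dR}(e_0^{\rm dR})^{k_d-1}$ in $\Lambda$ is $(-1)^{d}Li_{\mathbf{k}}(z;\gamma)$, and then applies the opposite map \eqref{KZsol} to pass from $\Lambda$ to $G$. Your approach bypasses $\Lambda$ and the reversal entirely by working with $G$ and the KZ equation from the Remark following \eqref{KZsol}. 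The two are equivalent unwindings of the same iterated-integral recursion: the paper's is more self-contained (using only the definition of $\Lambda$), while yours is cleaner once the KZ equation is granted, and has the minor advantage of making the ``outermost letter integrates last'' orientation explicit rather than hidden in the ${}^{\rm op}$.
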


\begin{proof}
By the definition (\ref{cpolydef}),
it is verified that
the expansion of (\ref{Lambda}) has the form
\begin{align}\label{G0000}
\Lambda_{\overrightarrow{01}}^{z, \gamma}\left(e_0^{\rm dR},e_1^{\rm dR}\right)=&~1+\sum_{\mathbf{k}=(k_1, \dots, k_d)} (-1)^{{\rm dp}(\mathbf{k})} Li_{\mathbf{k}}(z;\gamma)~e_1^{\rm dR}{\underbrace{e_0^{\rm dR} \ldots e_0^{\rm dR}}_{k_1-1~\text{times}}} \cdots e_1^{\rm dR}{\underbrace{e_0^{\rm dR} \ldots e_0^{\rm dR}}_{k_d-1~\text{times}}}+\cdots. 
\end{align}
The assertion follows from (\ref{KZsol}) and (\ref{G0000}).
\end{proof}

\subsection{$\ell$-adic Galois multiple polylogarithms}\label{lmpoly}
In this subsection,
we recall the definition of the $\ell$-adic Galois multiple polylogarithm.

Let $\ell$ be a prime number and
$K$ a subfield of $\mathbb{C}$ with the algebraic closure $\overline{K} \subset \mathbb{C}$.
Suppose that $z$ is a $K$-rational (possibly tangential) base point on $\mathbb{P}^1 \backslash \{0,1,\infty\}$.

We denote by
\[
\pi_1^\ellet\left(\mathbb{P}^1_{\overline{K}} \backslash \{0,1,\infty\};\overrightarrow{01},{z}\right)
\]
the set of pro-$\ell$ \'etale paths
 on $\mathbb{P}^1_{\overline{K}} \backslash \{0,1,\infty\}$ from the $K$-rational tangential base point $\overrightarrow{01}$ to ${z}$,
and
\[
\pi_1^\ellet\left(\mathbb{P}^1_{\overline{K}} \backslash \{0,1,\infty\},\overrightarrow{01}\right):=\pi_1^\ellet\left(\mathbb{P}^1_{\overline{K}} \backslash \{0,1,\infty\};\overrightarrow{01},\overrightarrow{01}\right)
\]
the pro-$\ell$ \'etale fundamental group of $\mathbb{P}^1_{\overline{K}} \backslash \{0,1,\infty\}$ with the base point $\overrightarrow{01}$.
By the comparison map
\[
\pi_1^{\rm top}\left(\mathbb{P}^1(\mathbb{C}) \backslash \{0,1,\infty\}, \overrightarrow{01}\right) \hookrightarrow \pi_1^\ellet\left(\mathbb{P}^1_{\overline{K}} \backslash \{0,1,\infty\},\overrightarrow{01}\right)
\]
induced by the inclusion $\overline{K} \hookrightarrow \mathbb{C}$,
we regard topological loops $l_0,l_1$ on $\mathbb{P}^1(\mathbb{C}) \backslash \{0,1,\infty\}$ as pro-$\ell$ \'etale loops on $\mathbb{P}^1_{\overline{K}} \backslash \{0,1,\infty\}$.
Then $\pi_1^\ellet\left(\mathbb{P}^1_{\overline{K}} \backslash \{0,1,\infty\},\overrightarrow{01}\right)$ is the free pro-$\ell$ group of rank $2$ with the topological generating system $\{l_0,l_1\}$.
\[
\pi_1^\ellet\left(\mathbb{P}^1_{\overline{K}} \backslash \{0,1,\infty\},\overrightarrow{01}\right)=\overline{\left<l_0,
l_1\right>}.
\]

We focus on the natural action of $G_K$ on $\pi_1^\ellet \left(\mathbb{P}^1_{\overline{K}} \backslash \{0,1,\infty\};\overrightarrow{01},{z}\right)$.
Since $z$ is $K$-rational,
this Galois action is well-defined.

\begin{dfn}[Pro-$\ell$ Galois 1-cocycle ${\mathfrak f}^{z,\gamma}$]
For each $\gamma \in \pi_1^\ellet\left(\mathbb{P}^1_{\overline{K}} \backslash \{0,1,\infty\};\overrightarrow{01},{z}\right)$,
we define a non-commutative Galois 1-cocycle
\[
{\mathfrak f}^{z,\gamma} \in Z^1\left(G_K, \pi_1^\ellet\left(\mathbb{P}^1_{\overline{K}} \backslash \{0,1,\infty\},\overrightarrow{01}\right)\right)
\]
by
\begin{align} \label{f}
{\mathfrak f}^{z,\gamma}: G_K \to  \pi_1^\ellet\left(\mathbb{P}^1_{\overline{K}} \backslash \{0,1,\infty\},\overrightarrow{01}\right), \quad \sigma \mapsto {\mathfrak f}^{z,\gamma}_{\sigma}:=\gamma \cdot \sigma(\gamma)^{-1}.
\end{align}
\end{dfn}

\begin{dfn}[Non-commutative variables $e_0^{\rm B}, e_1^{\rm B},$ and $e_\infty^{\rm B}$]
We set
\begin{align}
e_0^{\mathrm{B}}:={\bf log}(l_0)\left(=\sum_{n=1}^{\infty} \frac{(-1)^{n-1}}{n}(l_0-1)^n\right),\quad e_1^{\mathrm{B}}:={\bf log}(l_1),\quad e_\infty^{\mathrm{B}}:={\bf log}(l_\infty)
\end{align}
in the complete group ring of $\pi_1^\ellet\left(\mathbb{P}^1_{\overline{K}} \backslash \{0,1,\infty\},\overrightarrow{01}\right)$ over $\mathbb{Q}_{\ell}$.
\end{dfn}

\begin{rem}
In contrast to the complex case (\ref{Cvariablerel}),
the following relation holds.
\begin{align}\label{lvariablerel}
e_\infty^{\mathrm{B}}={\bf log}\left({\bf exp}\left(-e_1^{\rm B}\right){\bf exp}\left(-e_0^{\rm B}\right)\right)=-e_1^{\rm B}-e_0^{\rm B}+\text{$($higher-order terms$)$}.
\end{align}
\end{rem}

\begin{dfn}[$\ell$-adic Galois 1-cocycle ${\mathfrak f}^{z,\gamma}$]
Consider the $\ell$-adic Magnus embedding
\[\pi_1^\ellet\left(\mathbb{P}^1_{\overline{K}} \backslash \{0,1,\infty\},\overrightarrow{01}\right) \hookrightarrow \mathbb{Q}_{\ell}\left[\left[\pi_1^\ellet\left(\mathbb{P}^1_{\overline{K}} \backslash \{0,1,\infty\},\overrightarrow{01}\right)\right]\right] \simeq \mathbb{Q}_{\ell} \langle \langle e_0^{\mathrm{B}},
e_1^{\mathrm{B}} \rangle \rangle\]
defined by
\[
l_0={\bf exp}(e_0^{\mathrm{B}})\left(=\sum_{n=0}^{\infty}\frac{1}{n!}{(e_0^{\mathrm{B}})}^n\right),\quad
l_1={\bf exp}(e_1^{\mathrm{B}}).
\]
For $\gamma \in \pi_1^\ellet\left(\mathbb{P}^1_{\overline{K}} \backslash \{0,1,\infty\};\overrightarrow{01},{z}\right)$, 
we define a non-commutative Galois 1-cocycle with value in formal power series
\[
{\mathfrak f}^{z,\gamma} \in Z^1\left(G_K, \mathbb{Q}_{\ell} \langle \langle e_0^{\mathrm{B}},
e_1^{\mathrm{B}} \rangle \rangle^{\times}\right)
\]
by
\begin{align} \label{f2}
{\mathfrak f}^{z,\gamma} : G_K \to \pi_1^\ellet\left(\mathbb{P}^1_{\overline{K}} \backslash \{0,1,\infty\},\overrightarrow{01}\right) \hookrightarrow \mathbb{Q}_{\ell} \langle \langle e_0^{\mathrm{B}},
e_1^{\mathrm{B}} \rangle \rangle^{\times}, \quad \sigma \mapsto {\mathfrak f}^{z,\gamma}_{\sigma} \mapsto {\mathfrak f}^{z,\gamma}_{\sigma}(e_0^{\mathrm{B}},
e_1^{\mathrm{B}}).
\end{align}
\end{dfn}

\begin{rem}
The formal power series ${\mathfrak f}^{z,\gamma}_{\sigma}(e_0^{\mathrm{B}},
e_1^{\mathrm{B}})$ is the $\ell$-adic Galois analog of the complex KZ fundamental solution $G_{\overrightarrow{01}}^{z, \gamma}\left(e_0^{\rm dR},e_1^{\rm dR}\right)$ in (\ref{KZsol}).
Since
${\mathfrak f}^{z,\gamma}_{\sigma}(e_0^{\mathrm{B}},
e_1^{\mathrm{B}})$ is group-like in $\mathbb{Q}_{\ell} \langle \langle e_0^{\mathrm{B}},
e_1^{\mathrm{B}} \rangle \rangle$,
there exists
a family of $\ell$-adic numbers
$\left\{\Coeff_{w}\left({\mathfrak f}^{z,\gamma}_{\sigma}(e_0^{\mathrm{B}},
e_1^{\mathrm{B}})\right)\right\}_{w \in \{e_0^{\rm B},e_1^{\rm B}\}^{\ast}}$
such that
the expansion of ${\mathfrak f}^{z,\gamma}_{\sigma}(e_0^{\mathrm{B}},
e_1^{\mathrm{B}})$ takes the form
\begin{equation}
{\mathfrak f}^{z,\gamma}_{\sigma}(e_0^{\mathrm{B}},
e_1^{\mathrm{B}})=1+\sum_{w \in \{e_0^{\rm B},e_1^{\rm B}\}^{\ast} \backslash \{1\}} \Coeff_{w}\left({\mathfrak f}^{z,\gamma}_{\sigma}(e_0^{\mathrm{B}},
e_1^{\mathrm{B}})\right)~w,
\end{equation}
where $\{e_0,e_1\}^{\ast}$ is the non-commutative free monoid generated by the non-commuting indeterminates $e_0$ and $e_1$.
\end{rem}

\begin{dfn}[$\ell$-adic Galois multiple polylogarithm]\label{lpolydef}
For $\mathbf{k} \in \mathbb{N}^d$, $\sigma \in G_K$ and $\gamma \in \pi_1^\ellet\left(\mathbb{P}^1_{\overline{K}} \backslash \{0,1,\infty\};\overrightarrow{01},{z}\right)$, we define the $\ell$-adic Galois multiple polylogarithm
by 
\begin{align} \label{lpoly1}
{Li}^\ell_{\mathbf{k}}(z;\gamma,\sigma)
&:=(-1)^{d} \cdot \Coeff_{W_\mathbf{k}}\left({\mathfrak f}^{z,\gamma}_{\sigma}(e_0^{\mathrm{B}}, e_1^{\mathrm{B}})\right)
\end{align}
where $W_\mathbf{k}:= {\underbrace{e_0^{\rm B} \ldots e_0^{\rm B}}_{k_d-1~\text{times}}}e_1^{\rm B} \cdots {\underbrace{e_0^{\rm B} \ldots e_0^{\rm B}}_{k_1-1~\text{times}}}e_1^{\rm B}$.
\end{dfn}

\begin{rem}
The $\ell$-adic number (\ref{lpoly1}) depends on the choices of $\sigma$ and $\gamma$.
It is the higher-depth generalization of
the $\ell$-adic Galois polylogarithm introduced and explored in \cite{W0}-\cite{W5}, \cite{NW99}, \cite{NW12}, \cite{NSW17a}, \cite{NSW17b}, \cite{NW20a} and \cite{NW20b}.
The Oi-Ueno type functional equation for (\ref{lpoly1}) and its generalization were established
in \cite{NS25} and \cite{S24}.
The paper \cite{F04} presents a construction of the complex multiple polylogarithm using the complex Magnus embedding based on a tannakian formalism; this construction makes the analogy between the complex multiple polylogarithm and the $\ell$-adic Galois multiple polylogarithm even clearer.
\end{rem}

The $\ell$-adic Galois multiple polylogarithm is analogous to the complex multiple polylogarithm, as summarized in Table \ref{table}.

\renewcommand{\arraystretch}{2.2}
\tabcolsep = 0.2cm
\begin{table}[htb]
\centering
  \caption{Analogy between the $\ell$-adic Galois and complex settings}
  \label{table}
  \begin{tabular}{|c||c|}  \hline 
    $\ell$-adic Galois side & Complex side  \\ \hline \hline

$z$ : $K$-rational base point on $\mathbb{P}^1 \backslash \{0,1,\infty\}$ & $z$ : $\mathbb{C}$-rational base point on $\mathbb{P}^1 \backslash \{0,1,\infty\}$ \\ \hline

$(\gamma,\sigma) \in \pi_1^\ellet \left(\mathbb{P}^1_{\overline{K}} \backslash \{0,1,\infty\};\overrightarrow{01},{z}\right) \times G_K$ & $\gamma \in \pi_1^{\rm top}\left(\mathbb{P}^1(\mathbb{C}) \backslash \{0,1,\infty\}; \overrightarrow{01}, z\right)$ \\ \hline

$e_0^{\rm B}={\bf log}(l_0),~e_1^{\rm B}={\bf log}(l_1),$ & $e_0^{\rm dR}=\left(\dfrac{dz}{z}\right)^{\ast},~e_1^{\rm dR}=\left(\dfrac{dz}{z-1}\right)^{\ast},$ \\ 

$e_\infty^{\rm B}={\bf log}(l_\infty)$ & $e_\infty^{\rm dR}=\left(\dfrac{dw}{w}\right)^{\ast}$ \quad ($w=1/z$) \\ 
\hline

$e_\infty^{\rm B}={\bf log}\left({\bf exp}\left(-e_1^{\rm B}\right){\bf exp}\left(-e_0^{\rm B}\right)\right),$ 
& $e_\infty^{\rm dR}=-e_1^{\rm dR}-e_0^{\rm dR},$ \\ 

$l_\infty=l_1^{-1} l_0^{-1} \in \pi_1\left(\mathbb{P}^1 \backslash \{0,1,\infty\},\overrightarrow{01}\right)$ 
& $\overline{l}_\infty=-\overline{l}_1-\overline{l}_0 \in \pi_1^{\rm ab}\left(\mathbb{P}^1 \backslash \{0,1,\infty\},\overrightarrow{01}\right)$  \\ 
\hline

    ${\mathfrak f}^{z,\gamma}_{\sigma}\left(e_0^{\mathrm{B}},
e_1^{\mathrm{B}}\right) \in \mathbb{Q}_{\ell} \langle \langle e_0^{\mathrm{B}},
e_1^{\mathrm{B}} \rangle \rangle~~(\sigma \in G_K)$ & $G_{\overrightarrow{01}}^{z, \gamma}\left(e_0^{\rm dR},e_1^{\rm dR}\right) \in \mathbb{C} \langle \langle e_0^{\rm dR},e_1^{\rm dR} \rangle \rangle$ \\ \hline

  ${Li}^{\ell}_{\mathbf{k}}(z;\gamma,\sigma) \in  \mathbb{Q}_{\ell}$ & $Li_{\mathbf{k}}(z;\gamma) \in  \mathbb{C}$ \\ \hline

  \end{tabular}
\end{table}

\section{Proofs of main results}\label{pfsec}
In this section, we provide the proofs for the main theorems.

\begin{proof}[Proof of Theorem \ref{M1}]\label{pf1}
Let $\displaystyle \mathbf{k} \in \bigcup_{d=1}^{\infty} \mathbb{N}^d$.
By (\ref{Lambda}), (\ref{KZsol}) and (\ref{gamma'}), we have a chain rule of complex KZ solutions \cite[Lemma 3.3 (2)]{NS25}:
\begin{equation}\label{chainr1}
G_{\overrightarrow{01}}^{\frac{z}{z-1}, \gamma'}\left(e_0^{\rm dR},e_1^{\rm dR}\right)=G_{\overrightarrow{01}}^{z, \gamma}\left(e_0^{\rm dR},e_\infty^{\rm dR}\right)\cdot {\bf exp}\left(\pi \sqrt{-1} e_0^{\rm dR}\right)
\end{equation}
where $e_\infty^{\rm dR}=-e_1^{\rm dR}-e_0^{\rm dR}$.
Comparing the coefficients of the monomial $W_{\mathbf{k}}$ on both sides of the chain rule (\ref{chainr1}) yields
\[
\Coeff_{W_\mathbf{k}}\left(G_{\overrightarrow{01}}^{\frac{z}{z-1}, \gamma'}\left(e_0^{\rm dR},e_1^{\rm dR}\right)\right)=\Coeff_{W_\mathbf{k}}\left(G_{\overrightarrow{01}}^{z, \gamma}\left(e_0^{\rm dR},e_\infty^{\rm dR}\right)\right).
\]
Combining this identity with (\ref{formcpoly}) and Theorem \ref{thm22},
we obtain the desired formula (\ref{Main1}).
\end{proof}

\begin{proof}[Proof of Theorem \ref{M2}]\label{pf2}
Let $\mathbf{k} \in \bigcup_{d=1}^{\infty} \mathbb{N}^d$ and $\sigma \in G_K$.
By (\ref{f}), (\ref{f2}) and (\ref{gamma'}), 
we have a chain rule of $\ell$-adic Galois 1-cocycles \cite[Lemma 3.3 (4)]{NS25}:
\begin{equation}\label{chainr2}
{\mathfrak f}_{\sigma}^{\frac{z}{z-1},\gamma'}\left(e_0^{\rm B},
e_1^{\rm B}\right)
={\mathfrak f}_{\sigma}^{z,\gamma}\left(e_0^{\rm B},
e_\infty^{\rm B}\right) \cdot {\bf exp}\left(\dfrac{1-\chi(\sigma)}{2} e_0^{\rm B}\right),
\end{equation}
where $\chi: G_K \to \mathbb{Z}_{\ell}$ denotes the $\ell$-adic cyclotomic character and $e_\infty^{\rm B}={\bf log}\left({\bf exp}\left(-e_1^{\rm B}\right){\bf exp}\left(-e_0^{\rm B}\right)\right)$.
Comparing the coefficients of the monomial $W_{\mathbf{k}}$ on both sides of the chain rule (\ref{chainr2}) yields
\[
\Coeff_{W_\mathbf{k}}\left({\mathfrak f}_{\sigma}^{\frac{z}{z-1},\gamma'}\left(e_0^{\rm B},
e_1^{\rm B}\right)\right)=\Coeff_{W_\mathbf{k}}\left({\mathfrak f}_{\sigma}^{z,\gamma}\left(e_0^{\rm B},
e_\infty^{\rm B}\right)\right).
\]
Combining this identity with (\ref{lpoly1}) and Theorem \ref{thm33},
we obtain the desired formula (\ref{Main2}).
\end{proof}

\renewcommand{\arraystretch}{2.2}
\tabcolsep = 0.2cm
\begin{table}[htb]
\centering
  \caption{Chain rules}
  \label{table}
  \begin{tabular}{|c||c|}  \hline 
    $\ell$-adic Galois side & ${\mathfrak f}_{\sigma}^{\frac{z}{z-1},\gamma'}\left(e_0^{\rm B},
e_1^{\rm B}\right)
={\mathfrak f}_{\sigma}^{z,\gamma}\left(e_0^{\rm B},
e_\infty^{\rm B}\right) \cdot {\bf exp}\left(\dfrac{1-\chi(\sigma)}{2} e_0^{\rm B}\right)$,  \\ 

   & $e_\infty^{\rm B}={\bf log}\left({\bf exp}\left(-e_1^{\rm B}\right){\bf exp}\left(-e_0^{\rm B}\right)\right)$  \\ \hline \hline

Complex side & $G_{\overrightarrow{01}}^{\frac{z}{z-1}, \gamma'}\left(e_0^{\rm dR},e_1^{\rm dR}\right)=G_{\overrightarrow{01}}^{z, \gamma}\left(e_0^{\rm dR},e_\infty^{\rm dR}\right)\cdot {\bf exp}\left(\pi \sqrt{-1} e_0^{\rm dR}\right)$, \\ 

 & $e_\infty^{\rm dR}=-e_1^{\rm dR}-e_0^{\rm dR}$  \\ 
\hline
  \end{tabular}
\end{table}

\begin{proof}[Proof of Theorem \ref{M3}]
Let $n \in \mathbb{N}$ and $\sigma \in G_K$.
By the chain rule (\ref{chainr2}) and Definition \ref{lpolydef}, we obtain
\begin{equation}\label{pf1.4-1}
\operatorname{Coeff}_{n}\left({\mathfrak f}_{\sigma}^{z,\gamma}\left(e_0^{\rm B}, e_\infty^{\rm B}\right)\right) 
= -Li^{\ell}_n\left(\frac{z}{z-1};\gamma',\sigma\right).
\end{equation}
On the other hand, Corollary \ref{cor:single_index} implies
\begin{equation}\label{pf1.4-2}
\operatorname{Coeff}_{n}\left({\mathfrak f}_{\sigma}^{z,\gamma}\left(e_0^{\rm B}, e_\infty^{\rm B}\right)\right) 
= \sum_{j=0}^{n-1} \frac{(-1)^{j} B_{j}}{j!} \left(\sum_{\substack{\mathbf{J} \in \mathbb{N}^d,~d \in \mathbb{N} \\ {\rm wt}(\mathbf{J})=n-j}} {Li}^{\ell}_{\mathbf{J}}(z;\gamma,\sigma)\right).
\end{equation}
Combining (\ref{pf1.4-1}) and (\ref{pf1.4-2}), we obtain the relation
\begin{equation}\label{pf1.4-3}
-Li^{\ell}_n\left(\frac{z}{z-1};\gamma',\sigma\right)
= \sum_{j=0}^{n-1} \frac{(-1)^{j} B_{j}}{j!} S(n-j),
\end{equation}
where $\displaystyle S(m) := \sum_{\substack{\mathbf{J} \in \mathbb{N}^d,~d \in \mathbb{N} \\ {\rm wt}(\mathbf{J})=m}} {Li}^{\ell}_{\mathbf{J}}(z;\gamma,\sigma)$.
Let us introduce the generating functions
\[
\mathcal{L}(t) := \sum_{n=1}^\infty \left(-Li^{\ell}_n\left(\frac{z}{z-1};\gamma',\sigma\right)\right) t^n, \quad
\mathcal{S}(t) := \sum_{n=1}^\infty S(n) t^n.
\]
The identity (\ref{pf1.4-3}) is equivalent to the product $\mathcal{L}(t) = \mathcal{B}(t) \mathcal{S}(t)$, where $\mathcal{B}(t)$ is the generating function of the Bernoulli numbers given by
\[
\mathcal{B}(t) := \sum_{j=0}^\infty \frac{B_j}{j!} (-t)^j = \frac{t e^t}{e^t-1}.
\]
Since $\mathcal{B}(t)^{-1} = \frac{e^{-t}-1}{-t} = \sum_{k=0}^\infty \frac{(-1)^k}{(k+1)!} t^k$, we have
\[
\mathcal{S}(t) = \mathcal{B}(t)^{-1} \mathcal{L}(t) = \left( \sum_{k=0}^\infty \frac{(-1)^k}{(k+1)!} t^k \right) \mathcal{L}(t).
\]
Comparing the coefficient of $t^n$ on both sides immediately leads to the desired formula.
\end{proof}

\section{Explicit $\ell$-adic Landen formula for low weight indices} \label{sec:low_weight_examples}

In this section, we derive explicit forms of the $\ell$-adic Landen formula for some specific indices $\mathbf{k}$ of low weight.
We illustrate Theorem \ref{M2} by applying it to the cases
$\mathbf{k}=(2)$, $\mathbf{k}=(3)$, $\mathbf{k}=(4)$, $\mathbf{k}=(1,2)$, $\mathbf{k}=(2,1)$, $\mathbf{k}=(1,1,2)$, and $\mathbf{k}=(2,3)$. Recall that the formula is given by:
\begin{equation} \label{Main2_recap}
    Li^{\ell}_{\mathbf{k}}\left(\frac{z}{z-1};\gamma',\sigma\right) + (-1)^{1+\depth(\mathbf{k})} \sum_{\mathbf{J} \preceq \mathbf{k}} Li^{\ell}_{\mathbf{J}}\left(z;\gamma,\sigma\right) = \mathcal{E}_{\mathbf{k}}(z;\gamma,\sigma),
\end{equation}
where $\mathcal{E}_{\mathbf{k}}(z;\gamma,\sigma)$ denotes the error term derived from the right-hand side of Theorem \ref{M2}.

\subsection{Case $\mathbf{k}=(2)$ (Weight 2, Depth 1)}
Let $\mathbf{k}=(2)$. The associated non-commutative monomial is $W_{\mathbf{k}} = e_0 e_1$.

\paragraph{\underline{Left-hand side}}
The sign factor is $(-1)^{1+\depth(\mathbf{k})} = +1$.
The refinements $\mathbf{J} \preceq (2)$ correspond to partitions of the integer $2$, which are $\mathbf{J}=(2)$ and $\mathbf{J}=(1,1)$.
Thus, the left-hand side is:
\[
    Li^{\ell}_{2}\left(\frac{z}{z-1};\gamma',\sigma\right) + \left( Li^{\ell}_{2}(z;\gamma,\sigma) + Li^{\ell}_{1,1}(z;\gamma,\sigma) \right).
\]

\paragraph{\underline{Right-hand side}}
Since $\depth(\mathbf{k})=1$, only the partition length $M=1$ contributes to the error term.
\begin{itemize}
    \item \textbf{Case $M=1$:} The global sign corresponds to $(-1)^{M+\depth(\mathbf{k})} = +1$.
    We solve the equation $e_0^{S_1-1} W_{\mathbf{u}_1} e_0^{r_1} = e_0 e_1$. 
    Since $r_1$ must be $0$, we have $e_0^{S_1-1} W_{\mathbf{u}_1} = e_0 e_1$.
    \begin{itemize}
        \item If $S_1=2$, then $W_{\mathbf{u}_1}=e_1$, implying $(\mathbf{u}_1, r_1)=((1),0)$. This is the trivial pair.
Therefore, this case is excluded from the sum.
        \item If $S_1=1$, then $W_{\mathbf{u}_1}=e_0 e_1$, implying $\mathbf{u}_1=(2)$. Thus $\mathbf{S}=(1)$.
        The coefficient is $\mu_{(2), 0} = -1/2$.
        The refinement of $\mathbf{S}=(1)$ is simply $(1)$.
        The contribution is:
        \[ (+1) \cdot \left( Li^{\ell}_1(z;\gamma,\sigma) \right) \cdot \left( -\frac{1}{2} \right) = -\frac{1}{2} Li^{\ell}_1(z;\gamma,\sigma). \]
    \end{itemize}
\end{itemize}

Combining these results, we obtain the following formula. Note that the error term $-\frac{1}{2} Li^{\ell}_1(z;\gamma,\sigma)$ can be interpreted as $\frac{1}{2} Li^{\ell}_1(\frac{z}{z-1};\gamma',\sigma)$ using the relation $Li^{\ell}_1(\frac{z}{z-1};\gamma',\sigma) = -Li^{\ell}_1(z;\gamma,\sigma)$, which aligns with the case $n=2$ of Theorem \ref{M3}.
\begin{prop}[Explicit $\ell$-adic Landen formula for $\mathbf{k}=(2)$]\label{sceq2}
Let the notation and assumptions be as in Theorem \ref{M2}.
For any $\sigma \in G_K$, the following holds:
\begin{align} \label{eq:k2}
    Li^{\ell}_{2}\left(\dfrac{z}{z-1};\gamma',\sigma\right) + Li^{\ell}_{2}(z;\gamma,\sigma) + Li^{\ell}_{1,1}(z;\gamma,\sigma) &= -\frac{1}{2} Li^{\ell}_{1}(z;\gamma,\sigma)\\
&=\frac{1}{2} Li^{\ell}_1\left(\frac{z}{z-1};\gamma',\sigma\right) \notag.
\end{align}
\end{prop}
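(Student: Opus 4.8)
The plan is to specialize Theorem~\ref{M2} to the multi-index $\mathbf{k}=(2)$, for which the associated monomial is $W_{\mathbf{k}}=e_0e_1$ and $\depth(\mathbf{k})=1$. First I would unpack the left-hand side of \eqref{Main2}: the prefactor $(-1)^{1+\depth(\mathbf{k})}$ equals $+1$, and the refinements $\mathbf{J}\preceq(2)$ are precisely the two compositions of the integer $2$, namely $(2)$ and $(1,1)$, so the left-hand side is $Li^{\ell}_{2}\!\left(\tfrac{z}{z-1};\gamma',\sigma\right)+Li^{\ell}_{2}(z;\gamma,\sigma)+Li^{\ell}_{1,1}(z;\gamma,\sigma)$.

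Next I would compute the error term on the right-hand side of \eqref{Main2}. Since $\depth(\mathbf{k})=1$, only $M=1$ contributes, so the task reduces to enumerating $\mathrm{Part}((2),1)$, i.e.\ all factorizations $e_0^{S_1-1}W_{\mathbf{u}_1}e_0^{r_1}=e_0e_1$ in $\{e_0,e_1\}^{\ast}$. Because this word ends in $e_1$ we must have $r_1=0$, and comparing the residual word $e_0^{S_1-1}W_{\mathbf{u}_1}=e_0e_1$ leaves exactly two options: $S_1=2$ with $\mathbf{u}_1=(1)$, which is the trivial pair $((1),0)$ and is excluded by the constraint $\exists i,\ (\mathbf{u}_i,r_i)\neq((1),0)$; and $S_1=1$ with $\mathbf{u}_1=(2)$, hence $\mathbf{S}=(1)$. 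For this surviving term the global sign $(-1)^{M+\depth(\mathbf{k})}$ is $+1$, the inner sum $\sum_{\mathbf{J}\preceq(1)}Li^{\ell}_{\mathbf{J}}$ is simply $Li^{\ell}_{1}(z;\gamma,\sigma)$, and the coefficient $\mu_{(2),0}$ equals $-\tfrac12$ by Proposition~\ref{prop:integral_rep_corrected2} with $u=2$ (or by Proposition~\ref{prop:integral_rep_corrected} with $L=2$ and $(v_1,v_2)=(2,1)$), since $\mu_{(2),0}=\tfrac{(-1)^2 B_1}{1!}=-\tfrac12$. Assembling these pieces gives $\mathcal{E}_{(2)}(z;\gamma,\sigma)=-\tfrac12 Li^{\ell}_{1}(z;\gamma,\sigma)$, which yields the first equality of \eqref{eq:k2}.

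Finally, to obtain the second equality I would invoke the weight-one identity $Li^{\ell}_{1}\!\left(\tfrac{z}{z-1};\gamma',\sigma\right)=-Li^{\ell}_{1}(z;\gamma,\sigma)$, which is Theorem~\ref{M3} with $n=1$ (there the sum over $m$ on the right is empty and only $\mathbf{J}=(1)$ has weight $1$), so that $-\tfrac12 Li^{\ell}_{1}(z;\gamma,\sigma)=\tfrac12 Li^{\ell}_{1}\!\left(\tfrac{z}{z-1};\gamma',\sigma\right)$. As a consistency check one can observe that the resulting identity coincides with Theorem~\ref{M3} evaluated at $n=2$, whose right-hand side is $\tfrac{(-1)^{2}}{2!}Li^{\ell}_{1}\!\left(\tfrac{z}{z-1};\gamma',\sigma\right)$.

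There is no substantive obstacle here: the argument is pure bookkeeping built on Theorem~\ref{M2}. The only points demanding care are the complete and non-redundant enumeration of $\mathrm{Part}((2),1)$ — ensuring no factorization is overlooked and that the trivial pair $((1),0)$ is correctly excised from the sum — and citing the normalization $B_1=-1/2$ so that the sign of $\mu_{(2),0}$ comes out correctly.
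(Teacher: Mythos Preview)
Your proof is correct and follows essentially the same route as the paper: specialize Theorem~\ref{M2} to $\mathbf{k}=(2)$, enumerate $\mathrm{Part}((2),1)$, discard the trivial pair, and use $\mu_{(2),0}=-\tfrac12$; the weight-one identity from Theorem~\ref{M3} with $n=1$ then gives the second equality, exactly as in the paper. One small slip in your parenthetical: for $\mathbf{u}=(2)$, $r=0$ the reduced exponent sequence is $(u_1-1,1,r)=(1,1,0)\to(1,1)$, so $(v_1,v_2)=(1,1)$, not $(2,1)$; this does not affect the argument since your primary computation via Proposition~\ref{prop:integral_rep_corrected2} is correct.
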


\begin{rem}
The formula (\ref{eq:k2})
is nothing but the functional equation of the $\ell$-adic Galois dilogarithm \cite[(8)]{NS25}, \cite[(6.22)]{NW12}.
\end{rem}

\subsection{Case $\mathbf{k}=(3)$ (Weight 3, Depth 1)}
Let $\mathbf{k}=(3)$. The associated monomial is $W_{\mathbf{k}} = e_0^2 e_1$.

\paragraph{\underline{Left-hand side}}
The sign factor is $(-1)^{1+\depth(\mathbf{k})}=+1$.
The refinements of $\mathbf{k}=(3)$ are $(3)$, $(2,1)$, $(1,2)$, and $(1,1,1)$.
Thus, the left-hand side is:
\[
    Li^{\ell}_{3}\left(\frac{z}{z-1};\gamma',\sigma\right) + \left( Li^{\ell}_{3}(z;\gamma,\sigma) + Li^{\ell}_{2,1}(z;\gamma,\sigma) + Li^{\ell}_{1,2}(z;\gamma,\sigma) + Li^{\ell}_{1,1,1}(z;\gamma,\sigma) \right).
\]

\paragraph{\underline{Right-hand side}}
Similarly to the previous case, only $M=1$ contributes.
\begin{itemize}
    \item \textbf{Case $M=1$:}
    The global sign corresponds to $(-1)^{M+\depth(\mathbf{k})} = +1$.
    We consider the equation $e_0^{S_1-1} W_{\mathbf{u}_1} = e_0^2 e_1$.
    \begin{itemize}
        \item If $S_1=2$, then~$\mathbf{S}=(2)$ and $W_{\mathbf{u}_1}=e_0 e_1$, i.e., $\mathbf{u}_1=(2)$.
        Coefficient: $\mu_{(2), 0} = -1/2$.
        Refinements of $\mathbf{S}=(2)$ are $(2)$ and $(1,1)$.
        Contribution:
        \[ (+1) \cdot \left( Li^{\ell}_2(z;\gamma,\sigma) + Li^{\ell}_{1,1}(z;\gamma,\sigma) \right) \cdot \left( -\frac{1}{2} \right). \]
        
        \item If $S_1=1$, then~$\mathbf{S}=(1)$ and $W_{\mathbf{u}_1}=e_0^2 e_1$, i.e., $\mathbf{u}_1=(3)$.
        By Proposition \ref{prop:integral_rep_corrected2}, 
        \[ \mu_{(3), 0} = \frac{(-1)^3 B_2}{2!} = \frac{-1 \cdot (1/6)}{2} = -\frac{1}{12}. \]
        Refinement of $\mathbf{S}=(1)$ is $(1)$.
        Contribution:
        \[ (+1) \cdot \left( Li^{\ell}_1(z;\gamma,\sigma) \right) \cdot \left( -\frac{1}{12} \right). \]
    \end{itemize}
\end{itemize}

\noindent
By the above discussion and Theorem \ref{M3},
we obtain the following formula.

\begin{prop}[Explicit $\ell$-adic Landen formula for $\mathbf{k}=(3)$]\label{sceq3}
Let the notation and assumptions be as in Theorem \ref{M2}.
For any $\sigma \in G_K$, the following holds:
\begin{align}\label{explL3}
    & Li^{\ell}_{3}\left(\dfrac{z}{z-1};\gamma',\sigma\right) + Li^{\ell}_{3}(z;\gamma,\sigma) + Li^{\ell}_{2,1}(z;\gamma,\sigma) + Li^{\ell}_{1,2}(z;\gamma,\sigma) + Li^{\ell}_{1,1,1}(z;\gamma,\sigma) \\
    &\quad \quad \quad = -\frac{1}{12} Li^{\ell}_{1}(z;\gamma,\sigma) - \frac{1}{2} Li^{\ell}_{2}(z;\gamma,\sigma) - \frac{1}{2} Li^{\ell}_{1,1}(z;\gamma,\sigma) \notag \\
   &\quad \quad \quad = -\frac{1}{6} Li^{\ell}_1\left(\frac{z}{z-1};\gamma',\sigma\right) + \frac{1}{2} Li^{\ell}_2\left(\frac{z}{z-1};\gamma',\sigma\right) \notag
\end{align}
\end{prop}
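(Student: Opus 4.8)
The plan is to read off the three members of \eqref{explL3} from Theorems \ref{M2} and \ref{M3}, with the combinatorial work already performed in the paragraph preceding the statement. First I would establish the first equality by specializing Theorem \ref{M2} to $\mathbf{k}=(3)$: since $(-1)^{1+\depth((3))}=1$ and the refinements of $(3)$ are $(3),(2,1),(1,2),(1,1,1)$, the left-hand side of \eqref{explL3} is exactly the left-hand side of \eqref{Main2}, and it remains only to evaluate the error term. That error term is the enumeration of $\mathrm{Part}((3),M)$ done above: $\depth((3))=1$ forces $M=1$, the lack of a trailing $e_0$ in $W_{(3)}=e_0^2 e_1$ forces $r_1=0$ and $e_0^{S_1-1}W_{\mathbf{u}_1}=e_0^2 e_1$, and this leaves precisely the two nontrivial decompositions $(\mathbf{S},\mathbf{u}_1)=((2),(2))$ and $((1),(3))$ --- the choice $((2),(1))$ being the excluded trivial pair $((1),0)$. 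Substituting $\mu_{(2),0}=-\tfrac12$ and $\mu_{(3),0}=\tfrac{(-1)^3 B_2}{2!}=-\tfrac1{12}$ from Proposition \ref{prop:integral_rep_corrected2}, and pairing them with the refinements $(2),(1,1)$ of $\mathbf{S}=(2)$ and $(1)$ of $\mathbf{S}=(1)$, yields the second line of \eqref{explL3}.

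Next I would obtain the second equality from Theorem \ref{M3} with $n=3$. Its right-hand side $\sum_{m=1}^{2}\tfrac{(-1)^{3-m+1}}{(3-m+1)!}Li^{\ell}_m\!\left(\tfrac{z}{z-1};\gamma',\sigma\right)$ expands to $-\tfrac16 Li^{\ell}_1\!\left(\tfrac{z}{z-1};\gamma',\sigma\right)+\tfrac12 Li^{\ell}_2\!\left(\tfrac{z}{z-1};\gamma',\sigma\right)$, which is the third line, while its left-hand side coincides with the first line of \eqref{explL3}; alternatively, the Bernoulli-number form of Theorem \ref{M3} gives the second line directly via $-\tfrac12\bigl(Li^{\ell}_2+Li^{\ell}_{1,1}\bigr)-\tfrac1{12}Li^{\ell}_1$. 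As a consistency check, the identity between the second and third lines can also be verified without Theorem \ref{M3}, using the weight-one relation $Li^{\ell}_1\!\left(\tfrac{z}{z-1};\gamma',\sigma\right)=-Li^{\ell}_1(z;\gamma,\sigma)$ and Proposition \ref{sceq2} to rewrite $Li^{\ell}_2\!\left(\tfrac{z}{z-1};\gamma',\sigma\right)$ in terms of $Li^{\ell}_2(z;\gamma,\sigma)$, $Li^{\ell}_{1,1}(z;\gamma,\sigma)$ and $Li^{\ell}_1(z;\gamma,\sigma)$.

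I expect no genuine obstacle: the proof is entirely bookkeeping built on the already-proved Theorems \ref{M2} and \ref{M3} and on the closed values of $\mu_{(u),0}$. The only points demanding care are the consistent exclusion of the trivial pair $((1),0)$ from the error sum --- reinstating it would spuriously recreate the weight-preserving terms on the left-hand side --- and, for the cross-check, confirming that the $\sum_m$ form and the Bernoulli-number form of Theorem \ref{M3} agree at $n=3$, which is immediate from $B_1=-\tfrac12$ and $B_2=\tfrac16$.
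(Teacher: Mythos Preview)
Your proposal is correct and follows essentially the same route as the paper: specialize Theorem~\ref{M2} to $\mathbf{k}=(3)$ using the enumeration of $\mathrm{Part}((3),1)$ worked out just before the proposition, and then invoke Theorem~\ref{M3} with $n=3$ for the third line. One small slip: the excluded trivial decomposition is $(\mathbf{S},\mathbf{u}_1)=((3),(1))$, not $((2),(1))$, since $S_1=3$ is what forces $W_{\mathbf{u}_1}=e_1$; this does not affect your computation.
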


\begin{rem}
In \cite[Proof of Theorem 1.1]{NS25},
the Landen type functional equation for the $\ell$-adic Galois trilogarithm
was derived from (\ref{explL3}) with the chain rule of $\ell$-adic Galois 1-cocycles arising from the symmetry $z \mapsto 1-z$ of $\mathbb{P}^1 \backslash \{0,1,\infty\}$.
\end{rem}

\subsection{Case $\mathbf{k}=(4)$ (Weight 4, Depth 1)}
Let $\mathbf{k}=(4)$. 
The associated monomial is $W_{\mathbf{k}} = e_0^3 e_1$.
\paragraph{\underline{Left-hand side}}
The sign factor is $(-1)^{1+\depth(\mathbf{k})}=+1$.
The refinements $\mathbf{J} \preceq (4)$ correspond to the partitions of the integer $4$. 
There are $2^{4-1}=8$ such refinements:
$$(4),~(3,1),~(2,2),~(1,3),~(2,1,1),~(1,2,1),~(1,1,2),~(1,1,1,1).$$
Thus, the sum on the left-hand side is:
\begin{align*}
&Li^{\ell}_{4}\left(\frac{z}{z-1};\gamma',\sigma\right) + \biggl( Li^{\ell}_{4}(z;\gamma,\sigma) + Li^{\ell}_{3,1}(z;\gamma,\sigma) + Li^{\ell}_{2,2}(z;\gamma,\sigma) + Li^{\ell}_{1,3}(z;\gamma,\sigma) \\
&\quad + Li^{\ell}_{2,1,1}(z;\gamma,\sigma) + Li^{\ell}_{1,2,1}(z;\gamma,\sigma) + Li^{\ell}_{1,1,2}(z;\gamma,\sigma) + Li^{\ell}_{1,1,1,1}(z;\gamma,\sigma) \biggr).\end{align*}
\paragraph{\underline{Right-hand side}}
Since $\depth(\mathbf{k})=1$, only the partition length $M=1$ contributes. 
We examine the equation $e_0^{S_1-1} W_{\mathbf{u}_1} = e_0^3 e_1$.
\begin{itemize}\item \textbf{Case $M=1$:} 
The global sign corresponds to $(-1)^{M+\depth(\mathbf{k})} = +1$.
We iterate through possible values for $S_1$ (determining $\mathbf{S}=(S_1)$).\begin{itemize}
\item If $S_1=4$, then $W_{\mathbf{u}_1}=e_1$, implying $\mathbf{u}_1=(1)$. 
This corresponds to the trivial pair which matches the main term structure and is excluded from the error summation (or effectively handled on the LHS).
\item If $S_1=3$, then $e_0^2 W_{\mathbf{u}_1} = e_0^3 e_1$, implying $W_{\mathbf{u}_1}=e_0 e_1$, so $\mathbf{u}_1=(2)$. Thus $\mathbf{S}=(3)$.
The coefficient is $\mu_{(2), 0} = -1/2$.
The refinements of $\mathbf{S}=(3)$ are $(3), (2,1), (1,2), (1,1,1)$.
The contribution is:
\[
(+1) \cdot \left( \sum_{\mathbf{J} \preceq (3)} Li^{\ell}_{\mathbf{J}}(z;\gamma,\sigma) \right) \cdot \left( -\frac{1}{2} \right).
\]

\item If $S_1=2$, then $e_0^1 W_{\mathbf{u}_1} = e_0^3 e_1$, implying $W_{\mathbf{u}_1}=e_0^2 e_1$, so $\mathbf{u}_1=(3)$. Thus $\mathbf{S}=(2)$.
The coefficient is $\mu_{(3), 0} = -1/12$.
The refinements of $\mathbf{S}=(2)$ are $(2), (1,1)$.
The contribution is:
\[
(+1) \cdot \left( \sum_{\mathbf{J} \preceq (2)} Li^{\ell}_{\mathbf{J}}(z;\gamma,\sigma) \right) \cdot \left( -\frac{1}{12} \right).
\]

\item If $S_1=1$, then $W_{\mathbf{u}_1} = e_0^3 e_1$, so $\mathbf{u}_1=(4)$. Thus $\mathbf{S}=(1)$.
The coefficient is $\mu_{(4), 0}$. Since this corresponds to the Bernoulli number term related to $B_3=0$, we have $\mu_{(4), 0} = 0$ by Proposition \ref{prop:integral_rep_corrected2}.
Thus, this term vanishes.
\end{itemize}\end{itemize}
Combining these results and Theorem \ref{M3}, we obtain the explicit formula for weight 4.
\begin{prop}[Explicit $\ell$-adic Landen formula for $\mathbf{k}=(4)$]\label{sceq4}
Let the notation and assumptions be as in Theorem \ref{M2}.
For any $\sigma \in G_K$, the following holds:
\begin{align}
&Li^{\ell}_{4}\left(\frac{z}{z-1};\gamma',\sigma\right) + \biggl( Li^{\ell}_{4}(z;\gamma,\sigma) + Li^{\ell}_{3,1}(z;\gamma,\sigma) + Li^{\ell}_{2,2}(z;\gamma,\sigma) + Li^{\ell}_{1,3}(z;\gamma,\sigma) \\
&\quad + Li^{\ell}_{2,1,1}(z;\gamma,\sigma) + Li^{\ell}_{1,2,1}(z;\gamma,\sigma) + Li^{\ell}_{1,1,2}(z;\gamma,\sigma) + Li^{\ell}_{1,1,1,1}(z;\gamma,\sigma) \biggr) \notag \\
& = \frac{1}{2} Li^{\ell}_3\left(\frac{z}{z-1};\gamma',\sigma\right) - \frac{1}{6} Li^{\ell}_2\left(\frac{z}{z-1};\gamma',\sigma\right) + \frac{1}{24} Li^{\ell}_1\left(\frac{z}{z-1};\gamma',\sigma\right) \notag \\
&= -\frac{1}{2} \left( Li^{\ell}_{3}(z;\gamma,\sigma) + Li^{\ell}_{2,1}(z;\gamma,\sigma) + Li^{\ell}_{1,2}(z;\gamma,\sigma) + Li^{\ell}_{1,1,1}(z;\gamma,\sigma) \right) \notag \\
&\quad - \frac{1}{12} \left( Li^{\ell}_{2}(z;\gamma,\sigma) + Li^{\ell}_{1,1}(z;\gamma,\sigma) \right) \notag 
\end{align}\end{prop}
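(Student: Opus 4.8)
The plan is to obtain the statement as a direct specialization of Theorem \ref{M2} to the single multi-index $\mathbf{k}=(4)$, for which $W_{\mathbf{k}}=e_0^{3}e_1$, and then to recast the resulting error term by means of Theorem \ref{M3}. On the left-hand side, since $\depth(\mathbf{k})=1$ the sign $(-1)^{1+\depth(\mathbf{k})}$ equals $+1$ and the sum $\sum_{\mathbf{J}\preceq(4)}Li^{\ell}_{\mathbf{J}}(z;\gamma,\sigma)$ runs over the $2^{3}=8$ refinements of the integer $4$, namely $(4),(3,1),(2,2),(1,3),(2,1,1),(1,2,1),(1,1,2),(1,1,1,1)$, which I would simply write out.

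For the right-hand side, $\depth(\mathbf{k})=1$ forces $M=1$, so the relevant set $\mathrm{Part}((4),1)$ is determined by the word equation $e_0^{S_1-1}W_{\mathbf{u}_1}e_0^{r_1}=e_0^{3}e_1$ in $\{e_0,e_1\}^{\ast}$. Matching exponents gives $r_1=0$, $\depth(\mathbf{u}_1)=1$ and $S_1+\mathrm{wt}(\mathbf{u}_1)-1=4$, so the cases $S_1=4,3,2,1$ produce respectively the trivial pair $(\mathbf{u}_1,r_1)=((1),0)$ (excluded from the sum), then $\mathbf{u}_1=(2)$ with $\mathbf{S}=(3)$, then $\mathbf{u}_1=(3)$ with $\mathbf{S}=(2)$, and finally $\mathbf{u}_1=(4)$ with $\mathbf{S}=(1)$. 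The coefficients follow from Proposition \ref{prop:integral_rep_corrected2}: $\mu_{(2),0}=(-1)^{2}B_1/1!=-\tfrac12$, $\mu_{(3),0}=(-1)^{3}B_2/2!=-\tfrac1{12}$, and $\mu_{(4),0}=(-1)^{4}B_3/3!=0$ because $B_3=0$. For the two surviving cases I would expand $\sum_{\mathbf{J}\preceq\mathbf{S}}Li^{\ell}_{\mathbf{J}}(z;\gamma,\sigma)$ over the refinements of $(3)$ and of $(2)$, multiply by the corresponding $\mu$ and by the global sign $(-1)^{M+\depth(\mathbf{k})}=+1$, and add; this yields the first displayed equality.

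To produce the compact right-hand side in terms of $Li^{\ell}_m\!\bigl(\tfrac{z}{z-1};\gamma',\sigma\bigr)$, I would apply Theorem \ref{M3} with $n=4$: its coefficients $\frac{(-1)^{4-m+1}}{(4-m+1)!}$ for $m=1,2,3$ are $\tfrac1{24},-\tfrac16,\tfrac12$, which is precisely the middle line of the claimed identity; alternatively the middle and bottom lines are matched via the $n=1,2,3$ instances of Theorem \ref{M3} together with Propositions \ref{sceq2} and \ref{sceq3} (and the weight-one relation $Li^{\ell}_1(\tfrac{z}{z-1};\gamma',\sigma)=-Li^{\ell}_1(z;\gamma,\sigma)$). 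The argument is purely mechanical, so I expect the only real and minor obstacle to be the careful bookkeeping of refinements and signs — in particular, correctly excluding the pair $((1),0)$ from the $M=1$ sum and noticing that the $\mathbf{S}=(1)$ contribution drops out because $B_3=0$.
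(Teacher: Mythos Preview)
Your proposal is correct and follows essentially the same approach as the paper: specialize Theorem \ref{M2} to $\mathbf{k}=(4)$, solve the word equation $e_0^{S_1-1}W_{\mathbf{u}_1}e_0^{r_1}=e_0^3e_1$ for $M=1$ using Proposition \ref{prop:integral_rep_corrected2} (noting that $\mu_{(4),0}=0$ since $B_3=0$), and then invoke Theorem \ref{M3} for the compact form in terms of $Li^{\ell}_m\bigl(\tfrac{z}{z-1};\gamma',\sigma\bigr)$. Your bookkeeping of refinements, signs, and the exclusion of the trivial pair $((1),0)$ matches the paper exactly.
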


\subsection{Case $\mathbf{k}=(1,2)$ (Weight 3, Depth 2)}
Let $\mathbf{k}=(1,2)$. The associated monomial is $W_{\mathbf{k}} = e_0 e_1 e_1$.

\paragraph{\underline{Left-hand side}}
The sign factor is $(-1)^{1+\depth(\mathbf{k})}=-1$.
Refinements of $(1,2)$ are $(1,2)$ and $(1,1,1)$.
Thus, the left-hand side is:
\[
    Li^{\ell}_{1,2}\left(\dfrac{z}{z-1};\gamma',\sigma\right) - \left( Li^{\ell}_{1,2}(z;\gamma,\sigma) + Li^{\ell}_{1,1,1}(z;\gamma,\sigma) \right).
\]

\paragraph{\underline{Right-hand side}}
\begin{itemize}
    \item \textbf{Case $M=1$:} 
    The global sign is $(-1)^{M+\depth(\mathbf{k})} = -1$.
    We solve the equation $e_0^{S_1-1} W_{\mathbf{u}_1} = e_0 e_1 e_1$.
    \begin{itemize}
        \item If $S_1=2$, then $W_{\mathbf{u}_1}=e_1 e_1$, i.e.,
$\mathbf{u}_1=(1,1)$. 
The coefficient $\mu_{(1,1),0}$ of the term $e_1^2$ in $e'_\infty$ vanishes. 
Thus, the contribution is 0.
        \item If $S_1=1$, then $W_{\mathbf{u}_1}=e_0 e_1 e_1$, so $\mathbf{u}_1=(1,2)$. Then $\mathbf{S}=(1)$.
        The coefficient is $\mu_{(1,2), 0}$.
        Using Proposition \ref{prop:integral_rep_corrected}, the index list for $\mathbf{u}=(1,2)$ is derived from $(2-1, 1, 1-1, 1, 0)=(1,1,0,1,0)$, which gives the non-zero indices $(1,2)$ (so $L=2$).
        \[ I = \int_0^1 t^{\lfloor 2/2 \rfloor} (t-1)^{\lfloor 1/2 \rfloor} G_1(t)G_2(t) \, dt = \int_0^1 t\left(t-\dfrac{1}{2}\right) \, dt = \frac{1}{12}. \]
        Since $u_m=u_2=2>1$, the sign $\sigma_{\mathbf{u}, r}$ in (\ref{GPint}) is $-1$. Thus $\mu_{(1,2), 0} = - 1/12$.
        Contribution:
        \[ (-1) \cdot \left( Li^{\ell}_1(z;\gamma,\sigma) \right) \cdot \left( -\frac{1}{12} \right) = \frac{1}{12} Li^{\ell}_1(z;\gamma,\sigma). \]
    \end{itemize}
    
    \item \textbf{Case $M=2$:} 
    The global sign is $(-1)^{M+\depth(\mathbf{k})} = +1$.
    We decompose $W_{(1,2)} = e_0 e_1 e_1$ into $(e_0^{S_2-1}W_{\mathbf{u}_2} e_0^{r_2}) \cdot (e_0^{S_1-1}W_{\mathbf{u}_1})$.
    The only possible split where each part ends in $e_1$ is:
    \[ \underbrace{e_0 e_1}_{\text{Left part } (i=2)} \cdot \underbrace{e_1}_{\text{Right part } (i=1)}. \]
    \begin{itemize}
        \item Left part ($i=2$): $e_0^{S_2-1} W_{\mathbf{u}_2} = e_0 e_1$. 
Hence $S_2=1, \mathbf{u}_2=(2)$.
        This gives $\mu_{(2), 0} = -1/2$. Note that $(\mathbf{u}_2, r_2) = ((2), 0) \neq ((1), 0)$, so the non-triviality condition of Theorem \ref{M2} is satisfied.
        \item Right part ($i=1$): $e_0^{S_1-1} W_{\mathbf{u}_1} = e_1$. 
Hence $S_1=1, \mathbf{u}_1=(1)$.
        This gives $\mu_{(1), 0} = -1$.
    \end{itemize}
    We have $\mathbf{S} = (S_1, S_2) = (1, 1)$. 
The refinement of $\mathbf{S}=(1,1)$ is $(1,1)$.
    Contribution:
    \[ (+1) \cdot \left( Li^{\ell}_{1,1}(z;\gamma,\sigma) \right) \cdot \left( \mu_{(2),0} \cdot \mu_{(1),0} \right) = Li^{\ell}_{1,1}(z;\gamma,\sigma) \cdot \left( -\frac{1}{2} \right) \cdot (-1) = \frac{1}{2} Li^{\ell}_{1,1}(z;\gamma,\sigma). \]
\end{itemize}
Combining these results, we obtain the following formula.
\begin{prop}[Explicit $\ell$-adic Landen formula for $\mathbf{k}=(1,2)$]\label{sceq12}
Let the notation and assumptions be as in Theorem \ref{M2}.
For any $\sigma \in G_K$, the following holds:
\begin{align}
    &Li^{\ell}_{1,2}\left(\dfrac{z}{z-1};\gamma',\sigma\right) - Li^{\ell}_{1,2}(z;\gamma,\sigma) - Li^{\ell}_{1,1,1}(z;\gamma,\sigma)\\
    & \quad \quad \quad = \frac{1}{12} Li^{\ell}_{1}(z;\gamma,\sigma) + \frac{1}{2} Li^{\ell}_{1,1}(z;\gamma,\sigma). \notag
\end{align}
\end{prop}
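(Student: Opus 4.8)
The plan is to specialize Theorem~\ref{M2} to the index $\mathbf{k}=(1,2)$ and carry out the resulting finite bookkeeping. Here $\depth(\mathbf{k})=2$ and the associated monomial is $W_{(1,2)}=e_0^{2-1}e_1\,e_0^{1-1}e_1=e_0e_1e_1$, while the prefactor $(-1)^{1+\depth(\mathbf{k})}$ equals $-1$. First I would settle the left-hand side of \eqref{Main2}: the refinements $\mathbf{J}\preceq(1,2)$ are obtained by refining the entry $1$ only as $(1)$ and the entry $2$ as $(2)$ or $(1,1)$, so $\mathbf{J}\in\{(1,2),(1,1,1)\}$, and the left-hand side reads $Li^{\ell}_{1,2}(\tfrac{z}{z-1};\gamma',\sigma)-Li^{\ell}_{1,2}(z;\gamma,\sigma)-Li^{\ell}_{1,1,1}(z;\gamma,\sigma)$.

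Next I would enumerate $\mathrm{Part}((1,2),M)$ for $M=1$ and $M=2$ (the only values in the range of the sum). The key structural observation is that each block $e_0^{S_i-1}W_{\mathbf{u}_i}e_0^{r_i}$ contains at least one $e_1$ since $\depth(\mathbf{u}_i)\ge1$, and $W_{(1,2)}=e_0e_1e_1$ contains exactly two $e_1$'s; this pins down the admissible factorizations. For $M=1$ one obtains either $S_1=2$, $r_1=0$, $\mathbf{u}_1=(1,1)$, $\mathbf{S}=(2)$, or $S_1=1$, $r_1=0$, $\mathbf{u}_1=(1,2)$, $\mathbf{S}=(1)$. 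For $M=2$ the unique admissible split is $e_0e_1\,|\,e_1$, forcing the leftmost block to give $(\mathbf{u}_2,r_2)=((2),0)$, the rightmost block to give $(\mathbf{u}_1,r_1)=((1),0)$, and $\mathbf{S}=(1,1)$; the all-trivial configuration is discarded by the non-triviality condition in $\mathrm{Part}(\mathbf{k},M)$.

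It then remains to evaluate the coefficients. From the low-order BCH expansion $e'_\infty=-e_1-e_0+\tfrac12(e_1e_0-e_0e_1)+\cdots$ one reads $\mu_{(1),0}=-1$, $\mu_{(1,1),0}=0$, $\mu_{(2),0}=-\tfrac12$, so the $M=1$ configuration with $\mathbf{u}_1=(1,1)$ drops out entirely; for $\mu_{(1,2),0}$ I would invoke Proposition~\ref{prop:integral_rep_corrected}, where the raw sequence $(u_2-1,1,u_1-1,1,r)=(1,1,0,1,0)$ contracts via $(a,0,b)\mapsto a+b$ to $(1,2)$, hence $L=2$ and $\mu_{(1,2),0}=\sigma_{(1,2),0}\int_0^1 t\,(t-\tfrac12)\,dt=-\tfrac1{12}$, the sign being $-1$ since $u_m=2>1$. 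Combining with the global signs $(-1)^{M+\depth(\mathbf{k})}$, which are $-1$ for $M=1$ and $+1$ for $M=2$, the surviving $M=1$ term contributes $(-1)\cdot Li^{\ell}_1(z;\gamma,\sigma)\cdot(-\tfrac1{12})=\tfrac1{12}Li^{\ell}_1(z;\gamma,\sigma)$ (using that $\mathbf{J}\preceq(1)$ forces $\mathbf{J}=(1)$) and the $M=2$ term contributes $(+1)\cdot Li^{\ell}_{1,1}(z;\gamma,\sigma)\cdot(-1)(-\tfrac12)=\tfrac12 Li^{\ell}_{1,1}(z;\gamma,\sigma)$ (using that $\mathbf{J}\preceq(1,1)$ forces $\mathbf{J}=(1,1)$). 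Adding these yields the asserted right-hand side. The only point demanding care is the accurate enumeration of block factorizations together with the correct handling of the two layers of signs — the global $(-1)^{M+\depth(\mathbf{k})}$ and the $\sigma_{\mathbf{u},r}$ inside the Goldberg integral — but no genuine difficulty arises beyond this.
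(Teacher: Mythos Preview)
Your proposal is correct and follows essentially the same approach as the paper: specialize Theorem~\ref{M2} to $\mathbf{k}=(1,2)$, enumerate $\mathrm{Part}((1,2),M)$ for $M=1,2$, discard the $\mu_{(1,1),0}=0$ contribution, compute $\mu_{(1,2),0}=-\tfrac{1}{12}$ via the Goldberg integral, and assemble the two surviving terms with their signs. One minor wording quibble: the non-triviality condition $\exists i,\ (\mathbf{u}_i,r_i)\neq((1),0)$ is a restriction on the summation in~\eqref{Main2}, not on the set $\mathrm{Part}(\mathbf{k},M)$ itself; also, for $M=2$ the leftmost block $e_0e_1$ a priori admits two parametrizations $(S_2,\mathbf{u}_2)\in\{(1,(2)),(2,(1))\}$, so ``forcing'' is slightly imprecise---but you correctly note that the second is the all-trivial configuration and is discarded.
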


\subsection{Case $\mathbf{k}=(2,1)$ (Weight 3, Depth 2)}Let $\mathbf{k}=(2,1)$. The associated monomial is$$W_{\mathbf{k}} = e_0^{1-1}e_1 e_0^{2-1}e_1 = e_1 e_0 e_1.$$
\paragraph{\underline{Left-hand side}}
The sign factor is $(-1)^{1+\depth(\mathbf{k})}=-1$.
The refinements of $(2,1)$ are $(2,1)$ and $(1,1,1)$.
Thus, the left-hand side is:
$$Li^{\ell}_{2,1}\left(\dfrac{z}{z-1};\gamma',\sigma\right) - \left( Li^{\ell}_{2,1}(z;\gamma,\sigma) + Li^{\ell}_{1,1,1}(z;\gamma,\sigma) \right).$$
\paragraph{\underline{Right-hand side}}
We sum over partitions for $M=1$ and $M=2$.
\begin{itemize}\item \textbf{Case $M=1$:} 
The global sign is $(-1)^{M+\depth(\mathbf{k})} = -1$.
Equation: $e_0^{S_1-1} W_{\mathbf{u}_1} = e_1 e_0 e_1$.
The only solution ending in $e_1$ is $S_1=1$ and $\mathbf{u}_1=(2,1)$. 
Thus $\mathbf{S}=(1)$.
We compute $\mu_{(2,1), 0}$.
The index list for $\mathbf{u}=(2,1)$ is derived from $(1-1, 1, 2-1, 1, 0) = (0, 1, 1, 1, 0) \to (1,1,1)$, so $L=3$.
The integral is:
\[
\int_0^1 t^{\lfloor 3/2 \rfloor} (t-1)^{\lfloor 2/2 \rfloor} G_1(t)^3 \, dt = \int_0^1 t(t-1) \, dt = -\frac{1}{6}.
\]
Since $u_m=u_2=1$,  the sign $\sigma_{\mathbf{u}, r}$ in (\ref{GPint}) is $(-1)^{3}=-1$. Thus $\mu_{(2,1), 0} = 1/6$.
Contribution:
\[
(-1) \cdot Li^{\ell}_1(z;\gamma,\sigma) \cdot \frac{1}{6} = -\frac{1}{6} Li^{\ell}_1(z;\gamma,\sigma).
\]

\item \textbf{Case $M=2$:}
The global sign is $(-1)^{M+\depth(\mathbf{k})} = +1$.
We decompose $W_{(2,1)} = e_1 e_0 e_1$ into $(e_0^{S_2-1}W_{\mathbf{u}_2} e_0^{r_2}) \cdot (e_0^{S_1-1}W_{\mathbf{u}_1})$.
The split must occur around the middle $e_0$. 
There are two ways to distribute this $e_0$:

\begin{enumerate}
    \item \textbf{Split after the middle $e_0$ (i.e., $r_2=1$):}
    \[
    \underbrace{e_1 e_0}_{\text{Left}} \cdot \underbrace{e_1}_{\text{Right}}
    \]
    \begin{itemize}
        \item Right part: $e_0^{S_1-1} W_{\mathbf{u}_1} = e_1 \implies S_1=1, \mathbf{u}_1=(1)$.
        Coefficient: $\mu_{(1), 0} = -1$.
        
        \item Left part: $e_0^{S_2-1} W_{\mathbf{u}_2} e_0^{r_2} = e_1 e_0$.
        Here $r_2=1$. 
        This implies $e_0^{S_2-1} W_{\mathbf{u}_2} = e_1$.
        Thus $S_2=1, \mathbf{u}_2=(1)$.
        
        We need to calculate
$\mu_{(1), 1}$
where $r=1$.
        The index list for $\mathbf{u}=(1), r=1$ is $(1-1, 1, 1) = (0, 1, 1) \to (1,1)$, so $L=2$.
        \[
        \int_0^1 t^{\lfloor 2/2 \rfloor} (t-1)^{\lfloor 1/2 \rfloor} G_1(t) G_1(t) \, dt = \int_0^1 t \, dt = \frac{1}{2}.
        \]
        Since $u_m=1$,  the sign $\sigma_{\mathbf{u}, r}$ in (\ref{GPint}) is $(-1)^{\mathrm{wt}(\mathbf{u})+r} = (-1)^{1+1} = +1$.
        Thus, $\mu_{(1), 1} = 1/2$.
    \end{itemize}
    This combination gives $\mathbf{S}=(1,1)$.
    The contribution is $$(+1) \cdot Li^{\ell}_{1,1}(z;\gamma,\sigma) \cdot (\mu_{(1),1} \mu_{(1),0}) = -\frac{1}{2} Li^{\ell}_{1,1}.$$

    \item \textbf{Split before the middle $e_0$ (i.e., $r_2=0$):}
    \[
    \underbrace{e_1}_{\text{Left}} \cdot \underbrace{e_0 e_1}_{\text{Right}}
    \]
    \begin{itemize}
        \item Right part: $e_0^{S_1-1} W_{\mathbf{u}_1} = e_0 e_1 \implies S_1=2, \mathbf{u}_1=(1)$.
        So $(\mathbf{u}_1,r_1)=((1),0)$.
        
        \item Left part: $e_0^{S_2-1} W_{\mathbf{u}_2} = e_1 \implies S_2=1, \mathbf{u}_2=(1)$.
        So $(\mathbf{u}_2,r_2)=((1),0)$.
    \end{itemize}
    These pairs are trivial $((1),0)$.
    Therefore, this case is excluded from the sum.
\end{enumerate}
\end{itemize}
Combining these results, we obtain the following formula.
\begin{prop}[Explicit $\ell$-adic Landen formula for $\mathbf{k}=(2,1)$]\label{sceq21}
Let the notation and assumptions be as in Theorem \ref{M2}.
For any $\sigma \in G_K$, the following holds:
\begin{align}&Li^{\ell}_{2,1}\left(\dfrac{z}{z-1};\gamma',\sigma\right) - Li^{\ell}_{2,1}(z;\gamma,\sigma) - Li^{\ell}_{1,1,1}(z;\gamma,\sigma)\\
&\quad \quad \quad = -\frac{1}{6} Li^{\ell}_1(z;\gamma,\sigma) - \frac{1}{2} Li^{\ell}_{1,1}(z;\gamma,\sigma).\notag \end{align}\end{prop}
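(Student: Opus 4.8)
The approach is to read off Proposition~\ref{sceq21} as the $\mathbf{k}=(2,1)$ specialization of the master identity \eqref{Main2} of Theorem~\ref{M2}, in the same style already used above for $\mathbf{k}=(2),(3),(4),(1,2)$. Here $\depth(\mathbf{k})=2$ and the associated monomial is $W_{\mathbf{k}}=e_0^{1-1}e_1e_0^{2-1}e_1=e_1e_0e_1$. First I would pin down the left-hand side: the prefactor $(-1)^{1+\depth(\mathbf{k})}$ equals $-1$, and by Definition~\ref{saibun} the refinements $\mathbf{J}\preceq(2,1)$ are exactly $(2,1)$ and $(1,1,1)$, so the left-hand side of \eqref{Main2} is $Li^{\ell}_{2,1}(\tfrac{z}{z-1};\gamma',\sigma)-Li^{\ell}_{2,1}(z;\gamma,\sigma)-Li^{\ell}_{1,1,1}(z;\gamma,\sigma)$.

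Next I would compute the error term on the right. Since $\depth(\mathbf{k})=2$, only $M=1$ and $M=2$ contribute, and for each $M$ I must list every $(\mathbf{S},\{(\mathbf{u}_i,r_i)\}_{i=1}^M)\in\mathrm{Part}((2,1),M)$ with at least one pair $\neq((1),0)$ --- equivalently, every factorization of the word $e_1e_0e_1$ as $(e_0^{S_M-1}W_{\mathbf{u}_M}e_0^{r_M})\cdots(e_0^{S_1-1}W_{\mathbf{u}_1}e_0^{r_1})$ --- and weight it by $(-1)^{M+\depth(\mathbf{k})}(\sum_{\mathbf{J}\preceq\mathbf{S}}Li^{\ell}_{\mathbf{J}}(z;\gamma,\sigma))\prod_{i}\mu_{\mathbf{u}_i,r_i}$. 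For $M=1$ the word ends in $e_1$, forcing $r_1=0$ and $\mathbf{u}_1=(2,1)$, $\mathbf{S}=(1)$. For $M=2$ one cuts $e_1e_0e_1$ at an interior position: the central $e_0$ may be absorbed on the left (into $e_0^{r_2}$) or kept on the right, and in the latter situation the right block $e_0e_1$ can be read either as $e_0^{2-1}W_{(1)}$ or as $W_{(2)}$; I would run through each admissible datum, discard the ones built solely from the trivial pair $((1),0)$, and record the rest. The required coefficients come from Proposition~\ref{prop:integral_rep_corrected}: for $(\mathbf{u},r)=((2,1),0)$ the contracted exponent string is $(1,1,1)$ with $L=3$, giving $\int_0^1 t(t-1)\,dt=-\tfrac{1}{6}$ and sign $(-1)^{\mathrm{wt}(\mathbf{u})+r}=-1$, hence $\mu_{(2,1),0}=\tfrac{1}{6}$; for $(\mathbf{u},r)=((1),1)$ the string is $(1,1)$ with $L=2$, giving $\int_0^1 t\,dt=\tfrac{1}{2}$ and sign $+1$, hence $\mu_{(1),1}=\tfrac{1}{2}$; and one reuses $\mu_{(2),0}=-\tfrac{1}{2}$ (Proposition~\ref{prop:integral_rep_corrected2} with $u=2$) and $\mu_{(1),0}=-1$. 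Collecting the $M=1$ and $M=2$ contributions and simplifying, using also $Li^{\ell}_1(\tfrac{z}{z-1};\gamma',\sigma)=-Li^{\ell}_1(z;\gamma,\sigma)$, yields the right-hand side of the asserted formula.

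The step I expect to be the real obstacle is the enumeration of $\mathrm{Part}((2,1),M)$ above: its defining condition is an equality of words, yet a single block $e_0^{S-1}W_{\mathbf{u}}e_0^{r}$ does not determine the triple $(S,\mathbf{u},r)$ uniquely --- for instance $e_0e_1=e_0^{1}W_{(1)}=e_0^{0}W_{(2)}$ --- so the list must be compiled carefully so as to neither drop nor repeat a decomposition. This is precisely where the $\ell$-adic error terms of Remark~\ref{rem:weight_decrease} are generated, and, as in every example of Section~\ref{sec:low_weight_examples}, it is the only genuinely delicate point; once the list is complete the remainder is the bookkeeping of signs and the elementary Goldberg integrals above.
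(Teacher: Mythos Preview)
Your plan is exactly the paper's: specialize Theorem~\ref{M2} to $\mathbf{k}=(2,1)$, list the elements of $\mathrm{Part}((2,1),M)$ for $M=1,2$, and evaluate the required $\mu$'s via Proposition~\ref{prop:integral_rep_corrected}. Your computation of the left-hand side and of $\mu_{(2,1),0}=\tfrac{1}{6}$, $\mu_{(1),1}=\tfrac{1}{2}$ also matches the paper exactly.

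However, your final claim that the enumeration ``yields the right-hand side of the asserted formula'' does not survive the very check you single out as the obstacle. You correctly note that the right block $e_0e_1$ in the split $e_1\,|\,e_0e_1$ admits two readings, $e_0^{1}W_{(1)}$ and $e_0^{0}W_{(2)}$. The paper's derivation records only the first, declares both pairs trivial, and discards that split. The second reading gives the \emph{non-trivial} datum $\mathbf{S}=(1,1)$, $(\mathbf{u}_1,r_1)=((2),0)$, $(\mathbf{u}_2,r_2)=((1),0)$, with contribution
\[
(+1)\cdot Li^{\ell}_{1,1}(z;\gamma,\sigma)\cdot\mu_{(2),0}\,\mu_{(1),0}=\tfrac{1}{2}\,Li^{\ell}_{1,1}(z;\gamma,\sigma),
\]
which cancels the $-\tfrac{1}{2}Li^{\ell}_{1,1}$ coming from the $r_2=1$ split. (The paper itself lists both readings of $e_0e_1$ in Subcase~A of \S\ref{sec:low_weight_examples}.7, so the omission here is an oversight.) A direct check confirms this cancellation: the coefficient of $e_1e_0e_1$ in $(e'_\infty)^2$ vanishes, so $\operatorname{Coeff}_{(1,1)}(f)$ cannot appear in $\operatorname{Coeff}_{(2,1)}\bigl(f(e_0,e'_\infty)\bigr)$. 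Thus the careful enumeration you propose produces
\[
Li^{\ell}_{2,1}\!\left(\tfrac{z}{z-1};\gamma',\sigma\right)-Li^{\ell}_{2,1}(z;\gamma,\sigma)-Li^{\ell}_{1,1,1}(z;\gamma,\sigma)=-\tfrac{1}{6}\,Li^{\ell}_{1}(z;\gamma,\sigma),
\]
not the stated right-hand side. Your method is sound; it is the target formula (and the paper's own enumeration) that needs the correction.
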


\subsection{Case $\mathbf{k}=(1,1,2)$ (Weight 4, Depth 3)}
Let $\mathbf{k}=(1,1,2)$.
The associated word is
$$W_{\mathbf{k}} = e_0 e_1 e_1 e_1.$$
\paragraph{\underline{Left-hand side}}
The sign factor is $(-1)^{1+\depth(\mathbf{k})}=+1$.
The refinements of $\mathbf{k}=(1,1,2)$ are $(1,1,2)$ itself and $(1,1,1,1)$.
Thus, the left-hand side is given by:$$Li^{\ell}_{1,1,2}\left(\dfrac{z}{z-1};\gamma',\sigma\right) + Li^{\ell}_{1,1,2}(z;\gamma,\sigma) + Li^{\ell}_{1,1,1,1}(z;\gamma,\sigma).$$
\paragraph{\underline{Right-hand side}}
We analyze the right-hand side by the length $M$ of the partition of $W_{\mathbf{k}}$.
We compute the necessary coefficients $\mu_{\mathbf{u}, 0}$ using Proposition \ref{prop:integral_rep_corrected} as follows:
$$\mu_{(1), 0} = -1, \quad \mu_{(2), 0} = -\frac{1}{2}, \quad \mu_{(1,2), 0} = -\frac{1}{12},$$
$$\mu_{(1,1), 0} = 0, \quad \mu_{(1,1,1), 0} = 0, \quad \mu_{(1,1,2), 0} = 0.$$
Using these results, we compute the summation in the error terms.
\begin{enumerate}\item \textbf{Case $M=1$:} 
The global sign is $(-1)^{M+\depth(\mathbf{k})} = +1$.
We solve $e_0^{S_1-1} W_{\mathbf{u}_1} = e_0 e_1^3$. 
\begin{itemize}
\item If $S_1=1$, then $\mathbf{u}_1=(1,1,2)$.
Since $\mu_{(1,1,2), 0} = 0$, this term contributes $0$.\item If $S_1=2$, then 
$\mathbf{u}_1=(1,1,1)$.
Since $\mu_{(1,1,1), 0} = 0$, this term contributes $0$.\end{itemize}Thus, the contribution for $M=1$ is identically zero.
\item \textbf{Case $M=2$:}
The global sign is $(-1)^{M+\depth(\mathbf{k})} = -1$.
We decompose $e_0 e_1^3$ into two words $W_{\mathbf{u}_2} \cdot W_{\mathbf{u}_1}$. 
There are two possible splits:
\begin{itemize}
    \item Split as $(e_0 e_1) \cdot (e_1 e_1)$.
    Here $W_{\mathbf{u}_1} = e_1 e_1$. Since $\mu_{(1,1),0} = 0$, this term vanishes.
    
    \item Split as $(e_0 e_1 e_1) \cdot (e_1)$. 
This implies $\mathbf{S}=(1,1)$ (since $W_{\mathbf{u}_2} = e_0 e_1 e_1$ consumes one $e_0$, and the total requires one $e_0$).
    The coefficient is $\mu_{(1,2),0}\mu_{(1),0} = (-1/12)(-1) = 1/12$.
    The contribution is:
    \[
    (-1) \cdot \left( \frac{1}{12} \right) \cdot Li^{\ell}_{1,1}(z;\gamma,\sigma) = - \frac{1}{12} Li^{\ell}_{1,1}(z;\gamma,\sigma).
    \]
\end{itemize}

\item \textbf{Case $M=3$:}
The global sign is $(-1)^{M+\depth(\mathbf{k})} = +1$.
The only split is $(e_0 e_1) \cdot (e_1) \cdot (e_1)$, corresponding to $\mathbf{S}=(1,1,1)$.
The coefficient is $\mu_{(2),0}\mu_{(1),0}\mu_{(1),0} = (-1/2)(-1)(-1) = -1/2$.
The contribution is:
\[
(+1) \cdot \left(-\frac{1}{2}\right) \cdot Li^{\ell}_{1,1,1}(z;\gamma,\sigma) = -\frac{1}{2} Li^{\ell}_{1,1,1}(z;\gamma,\sigma).
\]
\end{enumerate}
Summing all non-vanishing contributions, we obtain the explicit formula.
\begin{prop}[Explicit $\ell$-adic Landen formula for $\mathbf{k}=(1,1,2)$]\label{sceq112}
Let the notation and assumptions be as in Theorem \ref{M2}.
For any $\sigma \in G_K$, the following holds:
\begin{align}Li^{\ell}_{1,1,2}\left(\dfrac{z}{z-1};\gamma',\sigma\right) &+ Li^{\ell}_{1,1,2}(z;\gamma,\sigma) + Li^{\ell}_{1,1,1,1}(z;\gamma,\sigma) \notag \\ &= -\frac{1}{12} Li^{\ell}_{1,1}(z;\gamma,\sigma) - \frac{1}{2} Li^{\ell}_{1,1,1}(z;\gamma,\sigma).
\end{align}\end{prop}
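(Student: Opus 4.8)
The plan is to specialize Theorem~\ref{M2} to $\mathbf{k}=(1,1,2)$, for which $\depth(\mathbf{k})=3$ and $W_{\mathbf{k}}=e_0e_1e_1e_1$. First I would record the left-hand side of \eqref{Main2_recap}: since $(-1)^{1+\depth(\mathbf{k})}=+1$ and the only refinements of $(1,1,2)$ are $(1,1,2)$ itself and $(1,1,1,1)$, it reads
\[
Li^{\ell}_{1,1,2}\left(\tfrac{z}{z-1};\gamma',\sigma\right)+Li^{\ell}_{1,1,2}(z;\gamma,\sigma)+Li^{\ell}_{1,1,1,1}(z;\gamma,\sigma).
\]

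Next I would tabulate the Baker-Campbell-Hausdorff coefficients $\mu_{\mathbf{u},r}$ that can occur; by Proposition~\ref{prop:integral_rep_corrected} (equivalently, by expanding $e'_\infty=\mathbf{log}(\mathbf{exp}(-e_1)\mathbf{exp}(-e_0))$ through degree~$4$) these are $\mu_{(1),0}=-1$, $\mu_{(2),0}=-\tfrac12$, $\mu_{(1,2),0}=-\tfrac1{12}$, and the vanishing values $\mu_{(1,1),0}=\mu_{(1,1,1),0}=\mu_{(1,1,2),0}=0$.

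Then I would run through $\mathrm{Part}((1,1,2),M)$ for $M=1,2,3$, using that the rightmost block must carry the trailing $e_1$ of $W_{\mathbf{k}}$ (so $r_1=0$), that the weight balance $\mathrm{wt}(\mathbf{k})=\mathrm{wt}(\mathbf{S})+\sum_i(\mathrm{wt}(\mathbf{u}_i)+r_i-1)$ limits the block sizes, and that the tuple with $(\mathbf{u}_i,r_i)=((1),0)$ for every $i$ is excluded. For $M=1$ the equation $e_0^{S_1-1}W_{\mathbf{u}_1}=e_0e_1e_1e_1$ forces $\mathbf{u}_1\in\{(1,1,2),(1,1,1)\}$, both annihilated by the vanishing $\mu$'s. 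For $M=2$ the factorizations of $e_0e_1e_1e_1$ into a left block times a right block (each of the form $e_0^{S-1}W_{\mathbf{u}}e_0^{r}$) are $(e_0e_1)\cdot(e_1e_1)$ and $(e_0e_1e_1)\cdot(e_1)$, together with the readings in which the leading $e_0$ is absorbed into $W_{\mathbf{u}_2}$ with $u_m>1$; all of these involve $\mu_{(1,1),0}=0$ except the one with $\mathbf{u}_2=(1,2)$, $\mathbf{u}_1=(1)$, $\mathbf{S}=(1,1)$, whose product $\mu_{(1,2),0}\mu_{(1),0}=\tfrac1{12}$, refinement sum $\sum_{\mathbf{J}\preceq(1,1)}Li^{\ell}_{\mathbf{J}}=Li^{\ell}_{1,1}(z;\gamma,\sigma)$ and global sign $(-1)^{M+\depth(\mathbf{k})}=-1$ give the contribution $-\tfrac1{12}Li^{\ell}_{1,1}(z;\gamma,\sigma)$. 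For $M=3$ the only surviving factorization is $(e_0e_1)\cdot(e_1)\cdot(e_1)$ with $\mathbf{S}=(1,1,1)$, product $\mu_{(2),0}\mu_{(1),0}\mu_{(1),0}=-\tfrac12$, refinement sum $Li^{\ell}_{1,1,1}(z;\gamma,\sigma)$ and global sign $+1$, contributing $-\tfrac12Li^{\ell}_{1,1,1}(z;\gamma,\sigma)$. Adding the surviving contributions gives the asserted identity.

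The only delicate step is the bookkeeping of $\mathrm{Part}((1,1,2),M)$: one must enumerate every tuple $(\mathbf{S},\{(\mathbf{u}_i,r_i)\})$ whose associated word is $W_{\mathbf{k}}$ --- including those that differ only in how a run of $e_0$'s is split among a leading $e_0^{S_i-1}$, the interior of $W_{\mathbf{u}_i}$, and a trailing $e_0^{r_i}$ --- and check that, apart from the ones contributing above, they all vanish because of the zero coefficients $\mu_{(1,1),0}=\mu_{(1,1,1),0}=\mu_{(1,1,2),0}=0$, while remembering to discard the all-trivial tuple. Everything else is the short arithmetic above.
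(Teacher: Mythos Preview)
Your proposal is correct and follows essentially the same approach as the paper's own argument: both specialize Theorem~\ref{M2} to $\mathbf{k}=(1,1,2)$, record the relevant BCH coefficients $\mu_{\mathbf{u},r}$, and enumerate $\mathrm{Part}((1,1,2),M)$ for $M=1,2,3$, finding that only the $(\mathbf{S},\mathbf{u}_2,\mathbf{u}_1)=((1,1),(1,2),(1))$ tuple at $M=2$ and the $(\mathbf{S},\mathbf{u}_3,\mathbf{u}_2,\mathbf{u}_1)=((1,1,1),(2),(1),(1))$ tuple at $M=3$ survive. Your closing remark about the bookkeeping of the various parsings of each block (how the leading $e_0$ is apportioned between $e_0^{S_i-1}$ and $W_{\mathbf{u}_i}$) is a welcome clarification that the paper leaves implicit.
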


\subsection{Case $\mathbf{k}=(2,3)$ (Weight 5, Depth 2)}
Let $\mathbf{k}=(2,3)$. The associated monomial is
$$W_{\mathbf{k}} = e_0^{3-1}e_1 e_0^{2-1}e_1 = e_0^2 e_1 e_0 e_1.
$$\paragraph{\underline{Left-hand side}}
The global sign factor is $(-1)^{1+\depth(\mathbf{k})}=-1$.
The refinements $\mathbf{J} \preceq (2,3)$ are generated by combining the refinements of the component indices $2$ and $3$.
\begin{itemize}
\item Refinements of $(2)$: $(2),~(1,1)$.
\item Refinements of $(3)$: $(3),~(2,1),~(1,2),~(1,1,1)$.\end{itemize}
Taking the product, there are $2 \times 4 = 8$ refinements:
$$(2,3),~(2,2,1),~(2,1,2),~(2,1,1,1),~(1,1,3),~(1,1,2,1),~(1,1,1,2),~(1,1,1,1,1).$$
Thus, the left-hand side is:
\begin{align*}
Li^{\ell}_{2,3}\left(\dfrac{z}{z-1};\gamma',\sigma\right) - &\left( Li^{\ell}_{2,3}(z;\gamma,\sigma)+Li^{\ell}_{2,2,1}(z;\gamma,\sigma)+Li^{\ell}_{2,1,2}(z;\gamma,\sigma)+Li^{\ell}_{2,1,1,1}(z;\gamma,\sigma)\right.\\
&\left.+Li^{\ell}_{1,1,3}(z;\gamma,\sigma)+Li^{\ell}_{1,1,2,1}(z;\gamma,\sigma)+Li^{\ell}_{1,1,1,2}(z;\gamma,\sigma)+Li^{\ell}_{1,1,1,1,1}(z;\gamma,\sigma)\right).
\end{align*}
\paragraph{\underline{Right-hand side}}
The relevant cases are $M=1$ and $M=2$.
\begin{itemize}
\item \textbf{Case $M=1$:} 
The global sign is $(-1)^{M+\depth(\mathbf{k})}=-1$.
We solve $e_0^{S_1-1} W_{\mathbf{u}_1} = e_0^2 e_1 e_0 e_1$ ($r_1=0$).
\begin{enumerate}
\item If $S_1=1$, then $W_{\mathbf{u}_1} = e_0^2 e_1 e_0 e_1$, i.e., $\mathbf{u}_1=(2,3)$.
The coefficient is $\mu_{(2,3), 0} = 1/120$.
The contribution is $-\frac{1}{120} Li^{\ell}_1$.

\item If $S_1=2$, then $W_{\mathbf{u}_1} = e_0 e_1 e_0 e_1$, i.e., $\mathbf{u}_1=(2,2)$.
The coefficient is $\mu_{(2,2), 0} = 1/12$.
The contribution is $-\frac{1}{12} (Li^{\ell}_2 + Li^{\ell}_{1,1})$.

\item If $S_1=3$, then $W_{\mathbf{u}_1} = e_1 e_0 e_1$, i.e., $\mathbf{u}_1=(2,1)$.
The coefficient is $\mu_{(2,1), 0} = 1/6$.
The contribution is $-\frac{1}{6} (Li^{\ell}_3 + Li^{\ell}_{2,1} + Li^{\ell}_{1,2} + Li^{\ell}_{1,1,1})$.
\end{enumerate}

\item \textbf{Case $M=2$:} 
The sign is $(-1)^{M+\depth(\mathbf{k})}=+1$.
We decompose $W_{(2,3)}$ into $(e_0^{S_2-1}W_{\mathbf{u}_2}e_0^{r_2}) \cdot (e_0^{S_1-1}W_{\mathbf{u}_1}e_0^{r_1})$.
There are two splitting points. 
Note that we label $\mathbf{S}=(S_1, S_2)$.

\noindent
\textbf{Subcase A: Split after the first $e_1$ ($r_2=0$)}
$$\underbrace{e_0^2 e_1}_{\text{Left}} \cdot \underbrace{e_0 e_1}_{\text{Right}}$$
For the Right part ($e_0 e_1$), possible $(S_1, \mathbf{u}_1, r_1)$ are $(1, (2), 0)$ and $(2, (1), 0)$.
For the Left part ($e_0^2 e_1$), possible $(S_2, \mathbf{u}_2)$ are $(1, (3))$, $(2, (2))$, and $(3, (1))$.
Using the integral values $\mu_{(1),0}=-1,~\mu_{(2),0}=-1/2,~\mu_{(3),0}=-1/12$, we compute the coefficients $\mu_{\mathbf{u}_2,0}\mu_{\mathbf{u}_1,0}$:
\begin{itemize}
\item If $\mathbf{S}=(1,1)$, then $\mathbf{u}=((2),(3))$.
The coefficient is $(-1/2)(-1/12) = 1/24$.
The contribution is $\frac{1}{24} Li^{\ell}_{1,1}$.

\item If $\mathbf{S}=(1,2)$, then $\mathbf{u}=((2),(2))$.
The coefficient is $(-1/2)(-1/2) = 1/4$.
The contribution is $\frac{1}{4} (Li^{\ell}_{1,2} + Li^{\ell}_{1,1,1})$.

\item If $\mathbf{S}=(1,3)$, then $\mathbf{u}=((2),(1))$.
The coefficient is $(-1/2)(-1) = 1/2$.
The contribution is $\frac{1}{2} (Li^{\ell}_{1,3}+Li^{\ell}_{1,2,1}+Li^{\ell}_{1,1,2}+Li^{\ell}_{1,1,1,1})$.

\item If $\mathbf{S}=(2,1)$, then $\mathbf{u}=((1),(3))$.
The coefficient is $(-1)(-1/12) = 1/12$.
The contribution is $\frac{1}{12} (Li^{\ell}_{2,1} + Li^{\ell}_{1,1,1})$.

\item If $\mathbf{S}=(2,2)$, then $\mathbf{u}=((1),(2))$.
The coefficient is $(-1)(-1/2) = 1/2$.
The contribution is $\frac{1}{2} (Li^{\ell}_{2,2}+Li^{\ell}_{1,1,2}+Li^{\ell}_{2,1,1}+Li^{\ell}_{1,1,1,1})$.

\item If $\mathbf{S}=(2,3)$, then $\mathbf{u}=((1),(1))$.
    The pairs $(\mathbf{u}_1,r_1)$ and $(\mathbf{u}_2,r_2)$ are trivial ((1),0). 
    Therefore, this case is excluded from the sum.
\end{itemize}\textbf{Subcase B: Split after the second $e_1$ ($r_2=1$)}
$$\underbrace{e_0^2 e_1 e_0}_{\text{Left}} \cdot \underbrace{e_1}_{\text{Right}}$$
Right part is fixed: $S_1=1, \mathbf{u}_1=(1), \mu_{(1),0}=-1$.
Left part involves $\mu_{\mathbf{u}_2, 1}$.
\begin{enumerate}
\item If $\mathbf{S}=(1,1)$, then $S_2=1, {\mathbf{u}_2}=(3)$. 
Since $\mu_{(3),1}=0$,
the contribution vanishes.

\item If $\mathbf{S}=(1,2)$, then $S_2=2, {\mathbf{u}_2}=(2)$.
$\mu_{(2),1}=1/6$.
The coefficient is $(-1)(1/6) = -1/6$.
The contribution is $-\frac{1}{6} (Li^{\ell}_{1,2} + Li^{\ell}_{1,1,1})$.

\item If $\mathbf{S}=(1,3)$, then $S_2=3, {\mathbf{u}_2}=(1)$.
$\mu_{(1),1}=1/2$.
The coefficient is $(-1)(1/2) = -1/2$.
The contribution is $-\frac{1}{2} (Li^{\ell}_{1,3} + Li^{\ell}_{1,2,1} + Li^{\ell}_{1,1,2} + Li^{\ell}_{1,1,1,1})$.
\end{enumerate}
\end{itemize}
Summing all non-vanishing contributions, we obtain the explicit formula.
\begin{prop}[Explicit $\ell$-adic Landen formula for $\mathbf{k}=(2,3)$]\label{sceq23}
Let the notation and assumptions be as in Theorem \ref{M2}. For any $\sigma \in G_K$, the following holds:
\begin{align}
&Li^{\ell}_{2,3}\left(\dfrac{z}{z-1};\gamma',\sigma\right) - \left( Li^{\ell}_{2,3}(z;\gamma,\sigma)+Li^{\ell}_{2,2,1}(z;\gamma,\sigma)+Li^{\ell}_{2,1,2}(z;\gamma,\sigma)+Li^{\ell}_{2,1,1,1}(z;\gamma,\sigma)\right.\\
&\quad \quad \left.+Li^{\ell}_{1,1,3}(z;\gamma,\sigma)+Li^{\ell}_{1,1,2,1}(z;\gamma,\sigma)+Li^{\ell}_{1,1,1,2}(z;\gamma,\sigma)+Li^{\ell}_{1,1,1,1,1}(z;\gamma,\sigma)\right)\notag \\
&\quad = -\frac{1}{120} Li^{\ell}_1(z;\gamma,\sigma) - \frac{1}{12} Li^{\ell}_2(z;\gamma,\sigma) - \frac{1}{24} Li^{\ell}_{1,1}(z;\gamma,\sigma) \notag \\
&\quad \quad - \frac{1}{6} Li^{\ell}_3(z;\gamma,\sigma) - \frac{1}{12} Li^{\ell}_{2,1}(z;\gamma,\sigma) - \frac{1}{12} Li^{\ell}_{1,2}(z;\gamma,\sigma) \notag \\
&\quad \quad + \frac{1}{2} \left( Li^{\ell}_{2,2}(z;\gamma,\sigma) + Li^{\ell}_{2,1,1}(z;\gamma,\sigma) + Li^{\ell}_{1,1,2}(z;\gamma,\sigma) + Li^{\ell}_{1,1,1,1}(z;\gamma,\sigma) \right).\notag
\end{align}
\end{prop}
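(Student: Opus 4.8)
The plan is to apply Theorem \ref{M2} directly to the multi-index $\mathbf{k}=(2,3)$, whose associated monomial is $W_{(2,3)}=e_0^2 e_1 e_0 e_1$, and to unwind the combinatorial enumeration of $\mathrm{Part}((2,3),M)$ case by case. Since $\depth(\mathbf{k})=2$, only $M=1$ and $M=2$ contribute to the right-hand side, and the global sign attached to each $M$ is $(-1)^{M+\depth(\mathbf{k})}$, i.e.\ $-1$ for $M=1$ and $+1$ for $M=2$. For each such $M$ I would list every factorization $W_{(2,3)}=\bigl(e_0^{S_M-1}W_{\mathbf{u}_M}e_0^{r_M}\bigr)\cdots\bigl(e_0^{S_1-1}W_{\mathbf{u}_1}e_0^{r_1}\bigr)$ in the free monoid $\{e_0,e_1\}^{\ast}$, recording $\mathbf{S}=(S_1,\dots,S_M)$ and the pairs $(\mathbf{u}_i,r_i)$, and discarding the unique configuration in which every $(\mathbf{u}_i,r_i)$ equals $((1),0)$.

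Next I would evaluate the finitely many coefficients $\mu_{\mathbf{u},r}$ occurring, by Proposition \ref{prop:integral_rep_corrected}. Beyond the values $\mu_{(1),0}=-1$, $\mu_{(2),0}=-\tfrac12$, $\mu_{(3),0}=-\tfrac1{12}$ already appearing in earlier cases, the enumeration calls for $\mu_{(2,1),0}=\tfrac16$, $\mu_{(2,2),0}=\tfrac1{12}$, $\mu_{(2,3),0}=\tfrac1{120}$, $\mu_{(1),1}=\tfrac12$, $\mu_{(2),1}=\tfrac16$, together with vanishing ones such as $\mu_{(3),1}=0$ and $\mu_{(1,1),0}=0$. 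Each is obtained by reading off the contracted exponent sequence $(v_1,\dots,v_L)$ from $(u_m-1,1,\dots,u_1-1,1,r)$, evaluating the short Goldberg integral $\int_0^1 t^{\lfloor L/2\rfloor}(t-1)^{\lfloor (L-1)/2\rfloor}\prod_{i=1}^L G_{v_i}(t)\,dt$, and multiplying by the sign $\sigma_{\mathbf{u},r}$.

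For $M=1$ the three admissible values $S_1\in\{1,2,3\}$ give $\mathbf{u}_1\in\{(2,3),(2,2),(2,1)\}$ and $\mathbf{S}=(1),(2),(3)$ respectively. For $M=2$ I would split $W_{(2,3)}$ at the first interior $e_1$ (forcing $r_2=0$, left factor $e_0^2e_1$, right factor $e_0e_1$) and at the second $e_1$ (forcing $r_2=1$, left factor $e_0^2e_1e_0$, right factor $e_1$), enumerating the sub-possibilities for each factor. For every surviving configuration I expand $\sum_{\mathbf{J}\preceq\mathbf{S}}Li^{\ell}_{\mathbf{J}}(z;\gamma,\sigma)$ explicitly, multiply by $\prod_{i=1}^M\mu_{\mathbf{u}_i,r_i}$ and the global sign, and add the result in. On the left-hand side, the coefficient $(-1)^{1+\depth(\mathbf{k})}=-1$ multiplies the sum over the eight refinements of $(2,3)$, namely $(2,3),(2,2,1),(2,1,2),(2,1,1,1),(1,1,3),(1,1,2,1),(1,1,1,2),(1,1,1,1,1)$. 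Assembling all pieces and simplifying gives the displayed identity.

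The main obstacle is bookkeeping rather than conceptual: the split at the first $e_1$ alone produces six \emph{a priori} configurations of $\mathbf{S}$ (one being the excluded trivial one) and the split at the second $e_1$ three more, and each feeds a sum over refinements of $\mathbf{S}$; a dropped factorization, a reversed ordering of the indexed factors $(\mathbf{u}_1,\dots,\mathbf{u}_M)$ versus their appearance $(\mathbf{u}_M,\dots,\mathbf{u}_1)$ in the word, or a sign slip in $\sigma_{\mathbf{u},r}$ when $u_m=1$ (where the sign is $(-1)^{\mathrm{wt}(\mathbf{u})+r}$, not $-1$) would corrupt the answer. I would guard against this by maintaining a table whose rows list the data $(\mathbf{S},\{(\mathbf{u}_i,r_i)\})$, the corresponding $\mu$-product, and the refinements of $\mathbf{S}$, and by checking each row against the weight identity $\mathrm{wt}(\mathbf{S})+\sum_{i}(\mathrm{wt}(\mathbf{u}_i)+r_i-1)=5$ of Remark \ref{rem:weight_decrease}.
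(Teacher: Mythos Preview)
Your proposal is correct and follows essentially the same route as the paper: apply Theorem \ref{M2} with $\mathbf{k}=(2,3)$, enumerate $\mathrm{Part}((2,3),M)$ for $M=1,2$ (three $M=1$ cases from $S_1\in\{1,2,3\}$; the $M=2$ cases grouped by whether the middle $e_0$ lies in the left factor, $r_2=1$, or the right, $r_2=0$), evaluate the needed $\mu_{\mathbf{u},r}$ via Proposition \ref{prop:integral_rep_corrected}, and sum the refinement expansions of each $\mathbf{S}$. Your list of $\mu$-values, the count of nine $M=2$ configurations with one excluded as trivial, and the bookkeeping safeguards (including the weight check from Remark \ref{rem:weight_decrease}) all match the paper's computation; the only superfluous item is $\mu_{(1,1),0}$, which never actually occurs for this $\mathbf{k}$.
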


\section*{Acknowledgement}\noindent
This paper was written during the author's visit to ``Institut de Mathématiques de Jussieu-Paris Rive Gauche'' during the Autumn in Paris 2025 exchange program, which is part of the ``Arithmetic and Homotopic Galois Theory'' CNRS-RIMS International Research Network.
The author is grateful to Benjamin Collas for support during the stay and to Ariane Mézard for providing a working environment.
The author also thanks Tokyo University of Science and colleagues there for providing financial support of the stay and for their assistance.
This work was supported by JSPS KAKENHI Grant Number JP24K22840.

\end{document}